\newcommand{\note}[1]{\textcolor{red}{#1}}
\theoremstyle{plain}
\newtheorem{thm}{Theorem}[section]
\newtheorem{lemma}[thm]{Lemma}
\newtheorem{corollary}[thm]{Corollary}
\newtheorem{prop}[thm]{Proposition}
\theoremstyle{definition}
\newtheorem{defn}[thm]{Definition}
\newtheorem{rmk}[thm]{Remark}
\newcommand{\R}{\mathbb{R}}
\newcommand{\Rn}{\mathbb{R}^n}
\newcommand{\N}{\mathbb{N}}
\newcommand{\Sp}{\mathbb{S}}
\newcommand{\sus}{\subseteq}
\newcommand{\inv}[1]{{#1}^{-1}}
\newcommand{\D}{\mathrm{D}}
\newcommand{\der}{\mathrm{d}}
\newcommand{\g}{\mathfrak{g}}
\newcommand{\G}{\mathbb{G}}
\newcommand{\ye}{\mathbb{1}_E}
\newcommand{\Int}{\operatorname{int}}
\newcommand{\at}[1]{\raise-.5ex\hbox{\ensuremath|}_{#1}}
\begin{document}
\title{A rectifiability result for finite-perimeter sets in Carnot groups}
\author{Sebastiano Don}
\address{\textsc{Sebastiano Don}:  Department of Mathematics and Statistics, P.O.\ Box 35 (MaD), FI-40014, University of Jyv\"askyl\"a, Finland.}
\email{sedon@jyu.fi}
\author{Enrico Le Donne}
\address{\textsc{Enrico Le Donne}: 
Dipartimento di Matematica, Universit\`a di Pisa, Largo B. Pontecorvo 5, 56127 Pisa, Italy \\
\& \\
Department of Mathematics and Statistics, P.O.\ Box 35 (MaD), FI-40014, University of Jyv\"askyl\"a, Finland.} \email{enrico.ledonne@unipi.it}
\author{Terhi Moisala}
\address{\textsc{Terhi Moisala}:  Department of Mathematics and Statistics, P.O.\ Box 35 (MaD), FI-40014, University of Jyv\"askyl\"a, Finland.}
\email{tekamois@jyu.fi}
\author{Davide Vittone}
\address{\textsc{Davide Vittone}:  Dipartimento di Matematica Tullio Levi-Civita, Universit\`a di Padova, via Trieste 63, Italy.}
\email{vittone@math.unipd.it}

\subjclass[2010]{53C17, 22E25, 49Q15, 28A75}

\keywords{Sets of finite perimeter, Carnot groups, rectifiability, filiform groups, outer cone property, semigroups of horizontal normal, intrinsic Lipschitz graphs, abnormal directions}

\thanks{S.D., E.L.D. and T.M.\ have been partially supported by the Academy of Finland
(grant 288501 ``\emph{Geometry of subRiemannian groups}'')
and by the European Research Council
(ERC Starting Grant 713998 GeoMeG ``\emph{Geometry of Metric Groups}''). T.M. was also partially supported by the Finnish Cultural Foundation (grant 00170709). D.V.\ is supported by the University of Padova STARS Project ``Sub-Riemannian Geometry and Geometric Measure Theory Issues: Old and New'' (SUGGESTION), and by GNAMPA of INdAM (Italy) project “Applicazioni della Teoria delle Correnti all’Analisi Reale e al Trasporto Ottimo”.}

\begin{abstract}
	In the setting of Carnot groups, we are concerned with the
	rectifiability problem for subsets that have finite sub-Riemannian
	perimeter. We introduce a new notion of rectifiability that is,
	possibly, weaker than the one introduced by Franchi, Serapioni, and
	Serra Cassano. Namely, we consider subsets $\Gamma$ that, similarly to
	intrinsic Lipschitz graphs, have a cone property: there exists an open
	dilation-invariant subset $C$ whose translations by elements in
	$\Gamma$ don't intersect $\Gamma$. However, a priori the cone $C$ may
	not have any horizontal directions in its interior. In every Carnot
	group, we prove that the reduced boundary of every finite-perimeter
	subset  can be covered by countably many subsets that have such a cone
	property. The cones are related to the semigroups generated by the
	horizontal half-spaces determined by the normal directions.
	We further study the case when one can find horizontal directions
	in the interior of the cones, in which case we infer that
	finite-perimeter subsets are countably rectifiable with respect to
	intrinsic Lipschitz graphs. A sufficient condition for this to hold is the existence of a 
	horizontal one-parameter subgroup that is not an abnormal curve. As an
	application, we verify that this property holds in every filiform group, of either first or second kind.
\end{abstract}
\maketitle
\section{Introduction}
The celebrated rectifiability theorem by De Giorgi, see \cite{DeGiorgi54, DeGiorgi55}, states that the reduced boundary of a set of finite perimeter in the Euclidean space $\R^n$ is $C^1$-rectifiable, i.e., it can be covered, up to a negligible set with respect to the Hausdorff measure $\mathscr H^{n-1}$, by a countable union of $C^1$ hypersurfaces. The proof of this theorem relies on the fact that the blow-up of a set of finite perimeter at a point of its reduced boundary is a set with constant normal, and each constant-normal set in $\R^n$ is a half-space. 
The importance of having sufficiently regular sets of finite perimeter is evident on many classical key problems in Geometric Measure Theory and underlines the relevance of the notion of rectifiability in this context. A wide impact can be, for example, detected in developing a sufficiently rich theory for functions of bounded variation; see e.g.\ the monographs \cite{Federer, GiaquintaModicaSoucek, AFP, Maggi, EG}. 

In the more general context of metric measure spaces, 
the regularity of  finite-perimeter 
sets and the structure of some suitable notions of their boundaries
has been object of several studies in the last decades. We refer to this task as the \emph{rectifiability problem}.

In the current paper, we study the rectifiability problem in the setting of Carnot groups. A Carnot group $\G$ of step $s\in \N$ is a connected and simply connected Lie group whose Lie algebra $\g$ is stratified into $s$ layers, i.e., it is linearly decomposed as $\g=\g_1\oplus\dots\oplus\g_s$ with
\[
[\g_1,\g_i]=\g_{i+1}\text{ for }i=1,\dots,s-1, \quad \g_s\neq \{0\}\quad\text{and}\quad [\g_1,\g_s]=\{0\}.
\]
We refer the reader to \cite{FS, LeDonne} for an introduction to Carnot groups. 
Canonically, every Carnot group has a one-parameter family of dilations that we denote by $ \{\delta_t\,:\, t\geq 0\}$. We further fix a homogeneous distance $d$ on $\G$, which is unique up to biLipschitz equivalence. In this setting the notions of perimeter and constant-normal set can be naturally defined (see Definitions~\ref{def:finiteperimeter} and~\ref{def:constantnormal}). The original result of De Giorgi has been generalized by Franchi, Serapioni and Serra Cassano first in Heisenberg groups \cite{FSSCHeis} and then in all step-2 Carnot groups \cite{FSSC03}. The same result holds in the so-called type $\star$ Carnot groups, which are Carnot groups satisfying a suitable algebraic condition, see \cite{Marchi}, which generalizes the step-2 condition but may hold in arbitrary step. We also mention the recent \cite{LDM}, where the authors provide a class of Carnot groups that generalizes the type $\star$ class for which De Giorgi's rectifiability result holds. These are the only classes of Carnot groups in which the rectifiability problem has been solved in a satisfactory way, so far.
 
 In general, only few partial results are known.
In arbitrary groups, the first delicate issue concerns the blow-up analysis of the reduced boundary of a set of finite perimeter. By \cite{FSSCHeis} every tangent set at a point of the reduced boundary 
 is a set with constant horizontal normal, but this is not enough to prove that the tangent is a half-space, in general. Indeed, in the Engel group (the simplest Carnot group of step 3) there are examples of sets with constant horizontal normal that are not half-spaces; see \cite[Example 3.2]{FSSC03}. However, in the paper \cite{AKLD}, the authors prove that, among all the possible blow-ups of a set of constant horizontal normal in a Carnot group, there is always a half-space.  
 
 Another issue is to understand  what is the correct notion of rectifiable set  in  Carnot groups.
  Namely, at the moment it is not known which kind of rectifiability property one should expect for 
   finite-perimeter sets in these spaces.
  It is well-known that in the Euclidean setting, three equivalent notions of codimension-1 rectifiability are available: the countable covering family can be composed by $C^1$ hypersurfaces, Lipschitz images of sets in $\R^{n-1}$ or Lipschitz codimension-1 graphs; see e.g. \cite{Mattila75, Mattila95} for an account of rectifiability theory in the Euclidean spaces.  Actually, a very natural notion of rectifiability, via Lipschitz images of open subsets of Euclidean spaces, was given in the setting of metric spaces already by Federer in \cite{Federer} (and later by Ambrosio and Kirchheim in \cite{ AmbrosioKirchheimCurr}). Unfortunately, this notion does not fit in the geometric structure of a Carnot group since, according to this definition, already the Heisenberg group is purely unrectifiable (see \cite{AmbrosioKirchheimRect}). 
  
  Nonetheless, a definition of rectifiability using a suitable notion of Lipschitz graphs 
  or $C^1$ hypersurfaces can still be fruitful in the setting of Carnot groups. 
For this purpose, Franchi Serapioni and Serra Cassano introduced the notions of intrinsic Lipschitz graphs   (see Definition~\ref{def:introlipgraph}) and of  intrinsic $C^1$  hypersurfaces.
We know that the notion of rectifiability with respect to intrinsic $C^1$  hypersurfaces 
implies the one with intrinsic Lipschitz graphs; see e.g.\ \cite[Theorem 3.2]{V12}. 
This stronger notion of rectifiability   was used in \cite{ FSSCHeis, FSSC03, Marchi}. More precisely, the authors proved that  the reduced boundary of a set of finite perimeter in a type $\star$ group $\G$ can be covered, up to a set of $ \mathscr H^{Q-1} $-measure zero, by a countable union of intrinsic $C^1$  hypersurfaces, where $Q$ is the Hausdorff dimension of  $\G$.

It  is not known whether
the possibly weaker notion of rectifiability, obtained by replacing the intrinsic $C^1$ hypersurfaces by intrinsic Lipschitz graphs,  leads to an equivalent definition.
In fact,  the validity of an intrinsic Rademacher-type theorem is  still an unsolved problem.
One of the aim of this paper is to discuss another form of rectifiability that may be a priori even weaker than the intrinsic Lipschitz rectifiability. 

 The notion of intrinsic Lipschitz graph appeared in different equivalent forms in \cite{FSSC06, FSSC11, FS16} and we briefly recall it here (see also Section~\ref{sec:Lip} for a more complete discussion) in a way that is suitable for our purposes.
\begin{defn}\label{def:introlipgraph}
 Let $\G$ be a Carnot group and let  $ \mathbb W, \mathbb L \sus \G$ be homogeneous subgroups of $ \G $ such that $ \G = \mathbb W \cdot \mathbb L $ and $ \mathbb W \cap \mathbb L = \{0\} $. We say that $ \Sigma\sus \G $ is an (entire) \emph{intrinsic Lipschitz graph} if there exists $\beta>0$ such that
\begin{itemize}
\item[(i)] for every $p\in\Sigma$ one has 
\begin{equation}\label{eq:conoLip}
\Sigma\cap p\cdot \bigcup_{\ell\in \mathbb L\setminus \{0\}} B(\ell,\beta d(0,\ell))=\emptyset;
\end{equation}
\item[(ii)] $\pi_\mathbb W(\Sigma)=\mathbb W$.
\end{itemize}
 \end{defn}
The set $ \bigcup_{\ell\in \mathbb L} B(\ell,\beta d(0,\ell))$ is a homogeneous cone around $\mathbb L$ of aperture given by parameter $\beta$, while $\pi_{\mathbb W}$ is the canonical projection associated with the decomposition $\G=\mathbb W \cdot \mathbb L$. The cones introduced above are not the ones that are often used in the literature of Carnot groups, but they produce the same notion of intrinsic Lipschitz graph (see e.g. \cite[Remark A.2]{DMV})
 
 One of the goals of the current paper is to analyze the geometry of sets that possess a ``cone-behavior'' that resembles property \eqref{eq:conoLip} in the following weaker sense.

\begin{defn}
	A non-empty set $C\subseteq \G$ is said to be a \emph{cone} if 
	\[
	\delta_rC=C, \quad\text{ for any $r>0$.}
	\]
\end{defn}
 
 \begin{defn}\label{def:introconeprop}
 	Let $\G$ be a Carnot group. We say that a set $\Gamma\subseteq \G$ satisfies an \emph{(outer) cone property} if there exists a cone $ C \subseteq \G $ such that 
 	\[
 	\Gamma\cap pC=\emptyset, \quad\text{ for every $p\in \Gamma$.}
 	\]
 	
 \end{defn}
 
\noindent A dilation-invariant set with nonempty interior, which turns out to be related with sets of locally finite perimeter, is  the following semigroup. This notion has been introduced in \cite{BLD, BLD19} in the study of constant-normal sets in Carnot groups. (We point out that in \cite{BLD19} the authors are mainly interested in an \emph{inner cone property} of sets as opposed to the outer cone property.)

\begin{defn}\label{def:introcono}
 Let $\G$ be a Carnot group. For any $\nu\in \mathfrak g_1\setminus \{0\}$, the \emph{semigroup of horizontal normal $\nu$} is defined by
 \[
 S_\nu\coloneqq S(\exp(\nu^\perp+\R^+\nu)),
 \]
 where for any $A\subseteq \G$, the set $S(A)\coloneqq \bigcup_{k=1}^\infty A^k$ is the semigroup generated by $A$. 
 Here $\nu^\perp $ denotes the orthogonal space to $\nu$ within $\g_1$ with respect to some scalar product that we fixed on $\g_1$.
\end{defn}
 \noindent Semigroups with horizontal normal are cones and, by a standard argument of Geometric Control Theory (see \cite{AS}), they have non-empty interior. It can be also proved (see Proposition~\ref{prop:Snu:has:CN}) that any semigroup of horizontal direction $\nu$ has $\nu$ as constant horizontal normal. We first point out that semigroups of horizontal normal $\nu$ are minimal in the following sense: $S_\nu$ is contained in every set with $\nu$ as constant horizontal normal and for which the identity element 0 of $\G$ has positive density,     see \cite[Corollary 2.31 or Theorem 2.37]{BLD19} . This property, together with the dilation-invariance of such semigroups, allows us to perform a fruitful blow-up procedure and get our main result; see Theorem~\ref{cor:main}.

\begin{thm}\label{th:intromain}
Let $\G$ be a Carnot group and let $E\subseteq \G$ be a set of locally finite perimeter. Then  there exists a  family $\{C_h:h\in \mathbb N\}$ of open cones in $\G$ and a family $\{\Gamma_h:h\in \N\}$ of subsets of $\G$ such that each $\Gamma_h$ satisfies the $C_h$-cone property (as in Definition~\ref{def:introconeprop}) and 
\[
\mathcal F E=\bigcup_{h\in \N} \Gamma_h,
\]
where $\mathcal F E$ denotes the reduced boundary of $E$.
\end{thm}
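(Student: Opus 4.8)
The plan is to reduce the statement to a blow-up/covering argument around each point of the reduced boundary $\mathcal F E$, using the semigroups $S_\nu$ of Definition~\ref{def:introcono} as the cones. First I would recall that at every point $p\in\mathcal F E$ the blow-up of $E$ (i.e.\ any subsequential limit of the rescaled sets $\delta_{1/r}(p^{-1}E)$ as $r\to 0$) is a set with constant horizontal normal $\nu_E(p)\in\g_1$, where $\nu_E$ denotes the measure-theoretic horizontal normal; this is the Franchi--Serapioni--Serra Cassano blow-up theorem valid in every Carnot group (via \cite{FSSCHeis}). The key structural input is the minimality property stated in the excerpt: $S_\nu$ is contained in every constant-normal-$\nu$ set having positive density at $0$. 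Combined with the fact that $S_\nu$ is an open cone with non-empty interior, this will let me say that, up to a small scale depending on $p$, the translated semigroup $p\cdot S_{\nu_E(p)}$ stays outside $E$ in a density sense, so that points of $\mathcal F E$ with normal close to $\nu_E(p)$ cannot lie in $p\cdot S_{\nu_E(p)}$.

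Concretely, the steps I would carry out are: (1) fix a countable dense set $\{\nu_h\}_{h\in\N}\subseteq\{\nu\in\g_1:|\nu|=1\}$ and consider slightly enlarged open cones $C_h\subsetneq S_{\nu_h}$ (an open cone strictly contained in $\mathrm{int}(S_{\nu_h})$ together with a relatively open ``safety'' enlargement so that $C_h$ absorbs the semigroups $S_\mu$ for all $\mu$ in a small neighbourhood $U_h$ of $\nu_h$; openness of $S_\mu$ and continuity of the semigroup in $\mu$ should give this) — the cones $C_h$ are dilation-invariant by construction since $S_\mu$ is; (2) for $p\in\mathcal F E$ with $\nu_E(p)\in U_h$, use the blow-up theorem plus the minimality of $S_{\nu_E(p)}$ to show there is $\rho(p)>0$ with
\[
\Big|E\cap p\cdot\big(C_h\cap B(0,\rho(p))\big)\Big|=0;
\]
(3) define
\[
\Gamma_h^{(j)}:=\Big\{p\in\mathcal F E:\ \nu_E(p)\in U_h,\ \rho(p)>\tfrac1j\Big\},
\]
re-index the countable family $\{\Gamma_h^{(j)}\}$ as $\{\Gamma_h\}_{h\in\N}$, and verify the cone property: if $p,q\in\Gamma_h^{(j)}$ then $q\notin p\cdot C_h$, because $q\in p\cdot C_h\cap B(p,1/j)$ would force $q$ to lie in a full-measure-complement portion of $E$ near $p$ while $q\in\mathcal F E$ forces both $E$ and its complement to have positive density at $q$ at every scale — contradiction with step (2) after a further localization; (4) finally $\mathcal F E=\bigcup_{h,j}\Gamma_h^{(j)}$ since $\nu_E(p)$ always lands in some $U_h$ and $\rho(p)>1/j$ for $j$ large.

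I expect the main obstacle to be step (2)–(3): carefully turning the \emph{density} statement ``$|E\cap p\cdot C_h\cap B(0,\rho)|=0$'' into the \emph{pointwise} exclusion ``$q\notin p\cdot C_h$'' for $q\in\mathcal F E$. A zero-measure set can still contain reduced-boundary points, so one must argue that if $q\in p\cdot C_h$, then because $C_h$ is open and dilation-invariant a whole small ball around $q$ of comparable radius lies in $p\cdot C_h\cap B(p,\rho(p))$, hence in the essential complement of $E$ — but then $q$ cannot be a point of density $\tfrac12$ of $E$, contradicting $q\in\mathcal F E$. Making the radii match (so that the ball around $q$ genuinely sits inside the truncated cone at $p$, uniformly over $\Gamma_h^{(j)}$) is where the dilation-invariance and openness of $C_h$ must be used quantitatively; this is essentially the same mechanism by which intrinsic Lipschitz cones exclude graph points in \eqref{eq:conoLip}, transported to the (possibly non-horizontal) semigroup cones. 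A secondary technical point is checking measurability of $p\mapsto\rho(p)$ / of the sets $\Gamma_h^{(j)}$, which follows from the measurability of the horizontal normal $\nu_E$ and a standard argument, and does not affect the covering conclusion.
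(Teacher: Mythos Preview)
Your overall strategy matches the paper's: partition $\mathcal F E$ according to the direction of $\nu_E$ and a scale threshold, use the blow-up theorem together with the minimality of $S_\nu$ among constant-normal sets, and exploit continuity of $\nu\mapsto S_\nu$ to work with finitely many cones. The sets $\Gamma_h^{(j)}$ in your step (3) are essentially the paper's $F_{i,\ell}$ (Lemma~\ref{t:cone:property}).

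However, step (2) as written is false, and this breaks step (3). The blow-up only gives $L^1_{\mathrm{loc}}$-convergence of $\mathbb 1_{\delta_{1/r}(p^{-1}E)}$ to some constant-normal set $F\supseteq S_{\nu_E(p)}$; at finite scales one obtains only
\[
\mu\big(B_r\cap S_{\nu_E(p)}\cap p^{-1}(\G\setminus E)\big)\leq \varepsilon r^Q\qquad\text{for all }r<r_p,
\]
never exact zero (this is Lemma~\ref{l:measure:upperbound}). Note also the sign: $pS_{\nu_E(p)}$ sits asymptotically \emph{inside} $E$, not outside. With only $\varepsilon$-smallness, your contradiction in (3) no longer goes through: even if $B(q,cr)\subseteq pC_h\cap B(p,2r)$, you would only get $\mu\big((\G\setminus E)\cap B(q,cr)\big)\leq \varepsilon (2r)^Q$, and to contradict $q\in\mathcal FE$ you would need a \emph{uniform} lower bound on the density of $\G\setminus E$ at $q$ at that specific scale $cr$. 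This is not available from $q\in\mathcal FE$ alone (and in Carnot groups reduced-boundary points are not known to have density $\tfrac12$).

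The paper circumvents this with a \emph{two-sided} argument. It uses both inequalities of Lemma~\ref{l:measure:upperbound}: at $x$ the cone $xC_i$ is $\varepsilon$-mostly in $E$, while at $y$ the inverse cone $yC_i^{-1}$ is $\varepsilon$-mostly in $\G\setminus E$. If $y\in xC(K_i)$ with $r=d(x,y)$, the intersection $I=xC_i\cap yC_i^{-1}\cap xB_{2r}\cap yB_{2r}$ satisfies $\mu(I)\geq \eta_i r^Q$ \emph{uniformly}, thanks to the extra compactness layer $K_i\Subset\Omega_i$ (Remark~\ref{rmk:conecompactness} and Lemma~\ref{positivemeasure}). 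Choosing $\varepsilon<\eta_i/3$ forces $\mu(I\setminus E)+\mu(I\cap E)<\mu(I)$, a contradiction. Your proposal lacks both this second estimate at $q$ and the compactness layer guaranteeing a uniform $c$ in ``ball of comparable radius''; without them the argument does not close.
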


Notice that the previous result is obtained without requiring any assumption on the Carnot group $\G$. The first natural question one may ask is when a set satisfying a cone property is also an intrinsic Lipschitz graph. We can point out a sufficient condition. Indeed, since each $C_h$ in the theorem above comes as a small shrinking of some semigroup $S_\nu$, it is enough that $S_\nu$ is such that 
there exists $X\in \g_1\setminus \{0\}$ with $\exp(X)\in \mathrm{int}(S_\nu)$ (see Remark~\ref{rmk:hyva} for more details). In Section~\ref{sec:Lip} we find some conditions on the group $\G$ that are sufficient to conclude that the reduced boundary of every set of finite perimeter in $\G$ is intrinsically Lipschitz rectifiable (see Definition~\ref{def:LR}). In particular, we notice that, whenever the group possesses a non-abnormal horizontal direction (see Definition~\ref{def:abnormal}), then each set $\Gamma_h$ appearing in Theorem~\ref{th:intromain} can be chosen to be an intrinsic Lipschitz graph (see Proposition~\ref{p:unicorn} and Corollary~\ref{c:horse}). The notions of normal and abnormal curve naturally appeared in Geometric Control Theory (see \cite{Montgomery}), and in the study of regularity of geodesics in sub-Riemannian manifolds (see  \cite{Vittone14}). In the paper \cite{LDMOPV}, it is proved that the one-parameter subgroup generated by $X\in \g_1$ is non-abnormal if and only if 
\begin{equation}\label{eq:algebraicintro}
\mathrm{span}\{\mathrm{ad}^k_X (\g_1) : k=0,\dots,s-1 \} = \g,
\end{equation}
where the adjoint of $X$ is defined by  $\mathrm{ad}_X^0(Y)\coloneqq Y$ and $\mathrm{ad}_X^k(Y)\coloneqq\mathrm{ad}_X^{k-1}([X,Y])$, for every $k\geq 1$ and every $Y\in \g$.
This gives us a purely algebraic sufficient condition on the group $\G$ for the intrinsic Lipschitz rectifiability of reduced boundaries of sets of finite perimeter. 
\begin{thm}\label{th:LRintro}
Let $\G$ be a Carnot group and assume there exists $X\in \g_1$ such that \eqref{eq:algebraicintro} holds. Then, the reduced boundary of every set of finite perimeter in $\G$ is intrinsically Lipschitz rectifiable. In particular, this result can be applied to all filiform Carnot groups.
\end{thm}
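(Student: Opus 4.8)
The plan is to deduce the statement from the already-available machinery: Theorem~\ref{th:intromain} together with the sufficient condition for upgrading a cone property to an intrinsic Lipschitz graph. Concretely, suppose $X\in\g_1$ satisfies \eqref{eq:algebraicintro}. By the cited result of \cite{LDMOPV}, the one-parameter subgroup $t\mapsto\exp(tX)$ is then non-abnormal, so the hypothesis of Proposition~\ref{p:unicorn} is met. Let $E\subseteq\G$ be a set of finite perimeter. Apply Theorem~\ref{th:intromain}: the reduced boundary $\mathcal F E$ is covered by countably many $\Gamma_h$, each with a $C_h$-cone property, where each cone $C_h$ is a small shrinking of some semigroup $S_{\nu_h}$ of horizontal normal $\nu_h$. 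The first step is to observe that the non-abnormality of the direction $X$ is ``universal'' in the group: it does not depend on $\nu_h$, so for \emph{every} $h$ we can use Proposition~\ref{p:unicorn} (via Remark~\ref{rmk:hyva}) to find a horizontal $Y_h\in\g_1\setminus\{0\}$ with $\exp(Y_h)\in\mathrm{int}(S_{\nu_h})$, hence $\exp(Y_h)$ lies in the interior of a cone comparable to $C_h$.

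The second step is to turn each $\Gamma_h$ into an intrinsic Lipschitz graph. Having a horizontal direction in the interior of the relevant cone is exactly what is needed to run the argument behind Corollary~\ref{c:horse}: the cone $C_h$ contains a genuine ``horizontal opening'', so one can choose complementary homogeneous subgroups $\mathbb W_h,\mathbb L_h$ with $\mathbb L_h$ one-dimensional horizontal (generated by $Y_h$) such that the cone $\bigcup_{\ell\in\mathbb L_h\setminus\{0\}}B(\ell,\beta_h d(0,\ell))$ is contained in $C_h$ for a suitable $\beta_h>0$. Then \eqref{eq:conoLip} for $\Gamma_h$ follows from the $C_h$-cone property of $\Gamma_h$. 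Since a set with the cone estimate \eqref{eq:conoLip}(i) can always be decomposed into countably many pieces each of which additionally satisfies the surjectivity condition (ii) — this is a standard covering/enlargement argument, e.g.\ splitting $\Gamma_h$ according to a countable partition of $\mathbb W_h$ and taking for each piece the graph of its intrinsic-Lipschitz extension — we obtain a further countable refinement $\{\Gamma_{h,j}\}$ with each $\Gamma_{h,j}$ an entire intrinsic Lipschitz graph, still covering $\mathcal F E$. This is precisely the definition of intrinsic Lipschitz rectifiability (Definition~\ref{def:LR}), proving the first assertion.

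For the ``in particular'' clause, it remains to check that every filiform Carnot group admits some $X\in\g_1$ satisfying \eqref{eq:algebraicintro}. Recall that a filiform group of step $s$ has $\dim\g_1=2$ and $\dim\g_i=1$ for $2\le i\le s$, so $\dim\g=s+1$; filiform groups come in first and second type according to the precise bracket relations. Pick a basis $X,Y$ of $\g_1$ adapted to the structure, with $Y$ generating the ``descending chain'' so that $\mathrm{ad}_X Y, \mathrm{ad}_X^2 Y,\dots,\mathrm{ad}_X^{s-1}Y$ span $\g_2\oplus\cdots\oplus\g_s$. Then $\mathrm{span}\{\mathrm{ad}_X^k(\g_1):k=0,\dots,s-1\}$ already contains $\g_1$ (the $k=0$ term) and all of $\g_2\oplus\cdots\oplus\g_s$ (from the images of $Y$), hence equals $\g$; so \eqref{eq:algebraicintro} holds for this $X$. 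One should do this verification separately for the two types, as the explicit brackets differ, but in both cases the same $X$ works; this is a short, explicit linear-algebra check on the structure constants. The main obstacle — or rather the subtlety requiring care — is the second step: making sure that the passage from ``$\Gamma_h$ has an outer cone property with a cone containing a horizontal direction'' to ``$\Gamma_h$ is (contained in) an entire intrinsic Lipschitz graph'' is done correctly, in particular that the projection/surjectivity condition (ii) in Definition~\ref{def:introlipgraph} is recovered after a countable decomposition, and that the aperture parameters $\beta_h$ can indeed be extracted uniformly on each piece from the (uniform) interior horizontal direction.
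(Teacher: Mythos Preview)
Your argument has a genuine gap at the crucial juncture. You assert that non-abnormality of $X$ (equivalently, \eqref{eq:algebraicintro}) means ``the hypothesis of Proposition~\ref{p:unicorn} is met'', and later that one can ``use Proposition~\ref{p:unicorn} (via Remark~\ref{rmk:hyva}) to find a horizontal $Y_h$ with $\exp(Y_h)\in\mathrm{int}(S_{\nu_h})$''. But the hypothesis of Corollary~\ref{p:unicorn} is precisely the existence, for every $\nu$, of such a horizontal direction; neither \ref{p:unicorn} nor \ref{rmk:hyva} \emph{produces} it for you. You never supply the mechanism that passes from ``$X$ is non-abnormal'' to ``$\exp(X)$ (or some perturbation of it) lies in $\mathrm{int}(S_\nu)$''. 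The paper's route is Lemma~\ref{l:elk}: non-abnormality of $X$ means $\der\mathrm{End}$ has full rank at the constant control $X$, so $\mathrm{End}$ is locally open there; then for any $\nu$ with $\langle\nu,X\rangle>0$ a small $L^\infty$-ball of controls around $X$ consists of curves whose derivatives still have positive $\nu$-component, hence (Lemma~\ref{l:curves:Snu}) their endpoints lie in $\overline S_\nu$, and local openness forces $\exp(X)\in\mathrm{int}(S_\nu)$. One then uses that the non-abnormal condition is open (Remark~\ref{rmk:U:notabnormal}) to get a basis of such directions and cover all signs of $\langle\nu,\cdot\rangle$ (Corollary~\ref{c:horse}). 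Without this step your argument is circular: you are assuming what has to be proved.

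Two smaller points. First, once the hypothesis of Corollary~\ref{p:unicorn} is actually verified, that corollary already delivers the full conclusion; your entire ``second step'' re-derives the proof of \ref{p:unicorn} from Theorem~\ref{th:intromain} and Remark~\ref{rmk:hyva}, which is redundant. Second, your filiform verification is incorrect as stated for groups of the second kind: with the filiform basis one has $[X_0,X_{s-1}]=0$, so $\mathrm{ad}_{X_0}^{s-1}X_1=0$ and your chain does \emph{not} reach $\g_s$; in fact both basis directions $X_0$ and $X_1$ are abnormal in the second kind (Proposition~\ref{prop:abn_lines_filiform}), so ``the same $X$ works'' fails. One must instead take a generic combination $aX_0+bX_1$ with $a,b\neq0$, for which $\mathrm{ad}_X^{s-1}X_1$ picks up a nonzero $X_s$-component from the bracket $[X_1,X_{s-1}]=-X_s$.
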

For the second part of Theorem~\ref{th:LRintro} see Section~\ref{Sec:Filiforms}. Among  filiform groups, we can find the Engel group, which is the simplest Carnot group of step $3$. We also point out that possessing a non-abnormal horizontal direction is stable under direct products and quotients. More precisely, if $\G_1$ and $\G_2$ are two Carnot groups that possess a non-abnormal horizontal direction, and $N$ is a normal subgroup of $\G_1$, then both $\G_1\times \G_2$ and $\G_1/N$ possess a non-abnormal horizontal direction (see Propositions~\ref{prop:products} and~\ref{prop:quotient}).

On the negative side, we point out that for example in the free group $\mathbb F_{2,3}$ of rank $2$ and step $3$, also known as Cartan group, all horizontal directions are abnormal and 
\[
\exp(\mathfrak f_1)\cap \mathrm{int}(S_\nu)=\emptyset, \quad\text{for every $\nu\in \mathfrak f_1\setminus \{0\},$}
\]
where $\mathfrak f_1$ denotes the horizontal layer of the Lie algebra of $\mathbb F_{2,3}$ (see \cite{BLD19}).

As an example of application, we remark that in \cite{DonVittone}, to study some properties of functions of bounded variation, the authors consider sub-Riemannian structures in which sets of finite perimeter have reduced boundary that is intrinsically Lipschitz rectifiable. 
 
The outline of the paper is the following. Section~\ref{sec:preliminaries} is devoted to the basic notions and facts related to Carnot groups. Section~\ref{sec:cones} is devoted to studying properties of closed semigroups of horizontal direction that will be crucial for the main theorem. Section~\ref{sec:main} contains the proof of Theorem~\ref{th:intromain}. Section~\ref{sec:Lip} contains the definitions of intrinsic Lipschitz graphs, intrinsic Lipschitz rectifiability and the proof of Theorem~\ref{th:LRintro} together with applications to some classes of Carnot groups. In Section~\ref{Sec:Filiforms} we introduce filiform groups and determine their abnormal lines and their graded Lie algebra automorphisms.

\vspace{0.5em}
\noindent{\bf Acknowledgements.} The authors warmly thank C.\ Bellettini, S.\ Rigot and the anonymous referee for precious comments and suggestions.

\section{Preliminaries}\label{sec:preliminaries} 
For an introduction to Carnot groups we refer the reader to \cite{FS, LeDonne}, while for a theory of sets of finite perimeter in Carnot groups, we refer to \cite{AKLD}.
In what follows, let $\G $ be a Carnot group of dimension $n$, let $\mathfrak g= \g_1\oplus \dots \oplus \g_s$ be its stratified Lie algebra and denote by $m\coloneqq \dim \g_1$ the rank of $\G$. We fix a scalar product $ \langle \cdot , \!\cdot \rangle $ on the horizontal layer $ \mathfrak g_1 $ of $ \g $ and a left-invariant Haar measure $\mu$ on $\G$. We endow $\G$ with the usual Carnot-Carath\'eodory metric $d$ and we denote by $B_r$ the metric ball of radius $r$ at the identity element of $\G$.

It is well-known that there exists a family $\{\delta_r\in \G^{\G}: r\geq 0\}$  such that $\delta_0\coloneqq 0$ and, for any $r>0$, $\delta_{r}$ is a graded diagonalizable automorphism of $\G$ satisfying the following properties. For any $r,s\geq 0$, $\delta_{rs}=\delta_r\circ\delta_s$ and, for every $x,y\in \G$ and $r\geq 0$
\[
d(\delta_r (x), \delta_r (y))=rd(x,y).
\] 
\begin{defn}\label{def:finiteperimeter}
Let $\Omega$ be an open set in $\G$. We say that a measurable set $E\subseteq \G$ has \emph{locally finite perimeter} in $\Omega$, if, for every $Y\in \mathfrak g_1$, there exists a Radon measure, denoted by $Y\ye$, on $\Omega$ such that
\[
\int_{A\cap E} Y\varphi \,d\mu=-\int_A \varphi \,d(Y\ye), \quad \text{for every open set $A\Subset \Omega$ and every $\varphi\in C_c^1(A).$}
\]
We say that $E$ has \emph{finite perimeter} in $\Omega$ if $E$ has locally finite perimeter in $\Omega$ and, for every basis $(X_1,\dots, X_m)$ of $\mathfrak g_1$, the total variation $|D\ye|(\Omega)$ of the measure $D\ye\coloneqq(X_1\ye,\dots,X_m\ye)$ is finite.
\end{defn}
\begin{defn}
 Let $ E \subseteq \G$ be a set with locally finite perimeter in $\G$. We define the \emph{reduced boundary} $ \mathcal{F}E $ of $ E $ to be the set of points $ p\in \G $ such that $ |\D\ye|(B(p,r))> 0 $ for all $ r > 0 $ and there exists
 \[
 \lim_{r \to 0}\dfrac{\D\ye (B(p,r))}{|\D\ye|(B(p,r))} \eqqcolon \nu_E(p)
 \]
 with $ |\nu_E(p)| = 1 $.
\end{defn}
\noindent We denote by $ \mathbb S(\G) $ the unit sphere of $\G$ and by $ \Sp(\g_1)$ the unit sphere in $ \mathfrak g_1 $.
\begin{defn}\label{def:constantnormal}
 Let $ E \subseteq \G $ be a set of locally finite perimeter in $\G$ and let $ \nu \in \mathbb S(\g_1)$. We say that $ E $ has $ \nu $ as \emph{constant horizontal normal} if $\nu \ye \geq 0$ in the sense of distributions and, for every $ X \in \nu^\perp\coloneqq\{ Y\in \mathfrak g_1: \langle \nu, Y\rangle=0\}$, one has $X \ye = 0$.
\end{defn}

\section{Cones in Carnot groups}\label{sec:cones}
In this section, we show that semigroups of horizontal normal $\nu$ represent the ``minimal'' sets having $\nu$ as constant horizontal normal (see Proposition~\ref{prop:Snu:has:CN} and Lemma~\ref{l:cn:contains:cone}).  Moreover, by Lemma~\ref{l:curves:Snu}, they are attainable sets of halfspaces in $\mathfrak g_1$. By a standard argument of Geometric Control Theory (see e.g.\ \cite[Theorem 8.1]{AS}), this implies that every $S_\nu$ has non-empty interior. 

An important result of this section is Proposition~\ref{prop:cone:continuity}, which proves that the semigroups $S_\nu$ are continuous with respect to the horizontal direction $\nu$. This fact, resumed in Remark~\ref{rmk:conecompactness}, will be used in the proof of Theorem~\ref{t:cone:property}.

The following proposition has been proven in \cite[Proposition 2.29]{BLD19}. We, however, write its short proof for the sake of completeness.

\begin{prop}
 \label{prop:Snu:has:CN}
 Let $\G$ be a Carnot group, let $ \nu \in \mathbb S(g_1) $ and let $ S_\nu $ be the semigroup with normal $ \nu $. Then $ S_\nu $ has $ \nu $ as constant horizontal normal.
\end{prop}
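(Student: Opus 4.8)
The plan is to read off the two requirements in Definition~\ref{def:constantnormal} as invariance properties of $S_\nu$ under one-parameter subgroups, and then to convert these into statements about the distributional derivatives $Y\ye$ by differentiating under the integral sign and using translation-invariance of $\mu$. Write $A:=\exp(\nu^\perp+\R^+\nu)=\exp\{v\in\g_1:\langle v,\nu\rangle\ge 0\}$, so that $S_\nu=\bigcup_{k\ge1}A^k$ is a semigroup containing $A$; in particular $S_\nu\cdot A\subseteq S_\nu$, and $S_\nu$ is Borel (a countable union of continuous images of finite products of the closed $\sigma$-compact set $A$). Since $\exp(\pm tX)\in A$ for every $X\in\nu^\perp$ and every $t\in\R$, the inclusion $S_\nu\cdot A\subseteq S_\nu$ gives $S_\nu\cdot\exp(tX)=S_\nu$ for all such $X$ and $t$; since $\exp(t\nu)\in A$ exactly for $t\ge 0$, it gives only the one-sided relation $S_\nu\cdot\exp(t\nu)\subseteq S_\nu$ for $t\ge 0$.

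For the analytic step, let $\Phi^Y_t(p):=p\exp(tY)$ denote the flow of the left-invariant field $Y\in\g_1$; since $\G$ is nilpotent, hence unimodular, each $\Phi^Y_t$ preserves $\mu$. For $\varphi\in C_c^1(\G)$, differentiating under the integral sign (legitimate by dominated convergence, as $Y\varphi\in C_c^1$ and the relevant supports stay in a fixed compact set for $|t|\le 1$) and changing variables through $\Phi^Y_t$ yields
\[
\int_{S_\nu} Y\varphi\,d\mu=\frac{d}{dt}\Big|_{t=0}\int_{S_\nu}(\varphi\circ\Phi^Y_t)\,d\mu=\frac{d}{dt}\Big|_{t=0}\int_{\Phi^Y_t(S_\nu)}\varphi\,d\mu .
\]
Taking $Y=X\in\nu^\perp$ and using $\Phi^X_t(S_\nu)=S_\nu$ for all $t$, the right-hand side is identically zero, so $X\ye=0$. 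Taking $Y=\nu$ and $\varphi\ge 0$: for $0\le s\le t$ one has $\Phi^\nu_t(S_\nu)\subseteq\Phi^\nu_s(S_\nu)$ (apply $\Phi^\nu_{-s}$: this is equivalent to $S_\nu\exp((t-s)\nu)\subseteq S_\nu$, true since $t-s\ge 0$), so $t\mapsto\int_{\Phi^\nu_t(S_\nu)}\varphi\,d\mu$ is non-increasing on $[0,\infty)$ and its derivative at $0$ is $\le 0$; hence $\int_{S_\nu}\nu\varphi\,d\mu\le 0$ for every non-negative $\varphi\in C_c^1$, which is precisely $\nu\ye\ge 0$.

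It remains to observe that $S_\nu$ is a set of locally finite perimeter, so that Definition~\ref{def:constantnormal} applies at all. Choose an orthonormal basis $X_1=\nu,X_2,\dots,X_m$ of $\g_1$ with $X_2,\dots,X_m\in\nu^\perp$; the computation above shows $X_j\ye=0$ for $j\ge 2$, while $\varphi\mapsto-\int_{S_\nu}\nu\varphi\,d\mu$ is a non-negative distribution, hence represented by a non-negative Radon measure which we call $\nu\ye$. By linearity $Y\ye=\langle Y,\nu\rangle\,\nu\ye$ is a Radon measure for every $Y\in\g_1$ and $|D\ye|=\nu\ye$ is locally finite, so $S_\nu$ has locally finite perimeter and $\nu$ is its constant horizontal normal. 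I expect the geometric input (the semigroup inclusions) to be immediate and the only care to be needed in justifying the differentiation under the integral and the change of variables, and in keeping the sign convention of Definition~\ref{def:finiteperimeter} straight when translating the monotonicity inequality into $\nu\ye\ge 0$.
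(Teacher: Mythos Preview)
Your proof is correct and follows essentially the same approach as the paper's: both arguments rest on the semigroup inclusions $S_\nu\cdot\exp(sX)\subseteq S_\nu$ for $s\ge 0$, $X\in\{\nu\}\cup\nu^\perp$, and then pass to a difference quotient/derivative of $t\mapsto\int_{S_\nu\exp(tX)}\varphi\,d\mu$ via the change of variables $p\mapsto p\exp(tX)$ (using unimodularity), invoking Riesz's theorem to turn the resulting sign condition into a Radon measure. Your write-up is a bit more explicit about the analytic justifications (differentiability, change of variables, why $S_\nu$ has locally finite perimeter), but the underlying mechanism is identical to the paper's.
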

\begin{proof}
 By definition, $ \mathbb{1}_{S_\nu \cdot \exp(s X)} \leq \mathbb{1}_{S_\nu} $ for all $ s\geq 0 $ and $ X \in \{\nu\}\cup \nu^\perp $. Let $\varphi\in C_c^\infty(\G;[0,+\infty[)$ and denote by $ \Phi_X(p,s) \coloneqq p\exp(sX)$ the flow of $ X $ at time $ s $ starting from point $ p\in \G $. Then
 \begin{align*}
 \int_\G \mathbb{1}_{S_\nu} X\varphi \,\der \mu =&  \int_\G\mathbb{1}_{S_\nu} \cdot\lim_{s\to 0} \dfrac{1}{s}(\varphi(\Phi_X(\cdot, s))-\varphi)\,\der \mu \\
 =& \lim_{s\to 0}\dfrac{1}{s}\left(\int_\G \mathbb 1_{{S_\nu}\cdot \exp(sX)}\varphi\,\der \mu- \int_\G \mathbb{1}_{S_\nu} \varphi\,\der \mu  \right) \leq 0\,
 \end{align*}
 and therefore
 \[
 -\int_G \mathbb{1}_{S_\nu} X\varphi \,\der \mu \geq 0\,. 
 \]
 Hence  $ \langle X\mathbb{1}_{S_\nu}, \cdot \rangle $ is a positive linear functional and by Riesz's Theorem $ X\mathbb{1}_{S_\nu}$ is a Radon measure. Moreover, since for each  $X \in  \nu^\perp $ also $ -X \in  \nu^\perp $, we get that $ X \mathbb{1}_{S_\nu} = 0 $ for all $ X \in \nu^\perp $. Consequently, $ {S_\nu} $ has $ \nu $ as constant horizontal normal.
\end{proof}
\begin{lemma}
	\label{l:curves:Snu}
	Let $\G$ be a Carnot group, $ T> 0 $, and let $\gamma: [0,T]\to\G$ be a horizontal curve such that $ \gamma(0) = 0 $. If $ \langle \dot{\gamma}(t),\nu \rangle \geq 0 $ for almost every $ t  $, then $ \gamma(T) \in \overline{S}_\nu $.
\end{lemma}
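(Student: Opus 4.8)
The plan is to approximate the horizontal curve $\gamma$ by a concatenation of pieces of one-parameter subgroups generated by horizontal vectors lying in the half-space $\{Y\in\g_1:\langle Y,\nu\rangle\ge 0\}$, so that each such piece belongs to $\exp(\nu^\perp+\R^+\nu)$, hence the endpoint lies in $S_\nu\subseteq\overline S_\nu$. First I would write $\dot\gamma(t)=\sum_{i=1}^m u_i(t)X_i$ in terms of an orthonormal basis $(X_1,\dots,X_m)$ of $\g_1$ with $X_1=\nu$, so that $\langle\dot\gamma(t),\nu\rangle=u_1(t)\ge 0$ for a.e.\ $t$; the control $u=(u_1,\dots,u_m)\in L^1([0,T];\R^m)$ has the property that its first component is nonnegative. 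Then I would approximate $u$ in $L^1$ by piecewise-constant controls $u^{(k)}$ whose first component is still nonnegative on each subinterval; this can be done by replacing $u_1$ with a piecewise-constant approximation from below or simply by averaging on a fine partition, which preserves nonnegativity, while approximating the other components freely.

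The second step is to observe that for a piecewise-constant control $u^{(k)}$, the corresponding trajectory $\gamma^{(k)}$ starting at $0$ is exactly a finite product $\exp(\tau_1 V_1)\exp(\tau_2 V_2)\cdots\exp(\tau_N V_N)$ with $\tau_j>0$ and $V_j=\sum_i u^{(k)}_{i,j}X_i$ a constant vector whose $\nu$-component $u^{(k)}_{1,j}$ is nonnegative; hence $\tau_j V_j\in\nu^\perp+\R^+\nu$ and each factor $\exp(\tau_j V_j)\in\exp(\nu^\perp+\R^+\nu)$. Therefore $\gamma^{(k)}(T)\in S(\exp(\nu^\perp+\R^+\nu))=S_\nu$. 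The third step is a continuous-dependence argument: since the flow of a control system depends continuously (locally uniformly) on the control in the $L^1$ topology, $\gamma^{(k)}(T)\to\gamma(T)$ as $k\to\infty$, and since each $\gamma^{(k)}(T)\in S_\nu$, the limit $\gamma(T)$ lies in $\overline S_\nu$, which is exactly the assertion.

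The main obstacle I anticipate is making the $L^1$ approximation and the continuous dependence quantitatively correct while \emph{preserving} the sign constraint $u_1\ge 0$: one must choose an approximation scheme for which nonnegativity of the first component is automatically inherited (averaging over subintervals of a partition does this, since the mean of a nonnegative function is nonnegative), and one must invoke the standard estimate that the endpoint map of the ODE $\dot x = \sum_i u_i(t) X_i(x)$ is continuous from $L^1([0,T];\R^m)$ to $\G$ on bounded sets of controls — this is classical (see, e.g., the discussion of the endpoint map in \cite{AS} or \cite{Montgomery}). A minor additional point is uniform boundedness of the approximating controls in $L^1$, which is needed to stay in the region where the continuity estimate applies; this is harmless since $\|u^{(k)}\|_{L^1}\to\|u\|_{L^1}$. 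Once these are in place, the chain ``piecewise-constant $\Rightarrow$ finite product in $\exp(\nu^\perp+\R^+\nu)$ $\Rightarrow$ in $S_\nu$'' together with closure gives the result.
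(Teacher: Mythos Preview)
Your proposal is correct and follows essentially the same approach as the paper's proof: write the control with respect to an orthonormal basis with $X_1=\nu$, approximate in $L^1$ by piecewise-constant controls preserving nonnegativity of the first component, observe that the resulting trajectories are finite products of elements of $\exp(\nu^\perp+\R^+\nu)$ and hence lie in $S_\nu$, and pass to the limit by continuous dependence. The paper carries out the continuous-dependence step explicitly via a Gr\"onwall inequality after first reducing (without loss of generality) to $\dot\gamma\in L^\infty$, which keeps the approximating controls uniformly bounded in $L^\infty$ and confines all trajectories to a fixed compact set where the vector fields are Lipschitz.
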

\begin{proof}
	Fix $T>0$, define $X_1\coloneqq\nu$ and let $X_2,\dots,X_m$ be such that $(X_1, X_2, \dots, X_m)$ is an orthonormal basis for $\g_1$. We fix $T>0$. We can assume without loss of generality that $\dot \gamma \in L^\infty([0,T];\g_1)$. Let $u_1,\dots, u_m\in L^\infty([0,T])$ be such that
	\[
	\dot\gamma(t)=\sum_{i=1}^m u_i(t)X_i(\gamma(t)),
	\]
	for $\mathscr L^1$-almost every $t\in [0,T]$. Then, by assumption, we know that $u_1(t)\geq 0$ for $\mathscr L^1$-almost every $t\in [0,T]$. Consider piecewise constant sequences $(u_1^h),\dots, (u_m^h) $ in $L^\infty([0,T])$, with $h\in \N$, such that 
	\[
	u_i^h\rightarrow u_i \qquad \text{ in $L^1([0,T])$}, \quad \text{ as } h\to \infty,
	\]
	for any $i=1,\dots, m$ and such that $u_1^h(t)\geq 0$ for $\mathscr L^1$-almost every $t\in [0,T]$ 
	. We can also assume that $\sup_{h\in \mathbb N}\sum_{i=1}^m\|u_i^h\|_\infty\leq M$, for some $M>0$. According to to the definition of  $S_\nu$ and since $u_i^h$ are piecewise constant, the curves defined by
	\[
	\begin{cases}
	\dot\gamma^h(t)=\sum_{i=1}^m u_i^h(t)X_i(\gamma^h(t)) & \text{ for $\mathscr L^1$-a.e. $t\in [0,T]$,}\\
	\gamma^h(0)=0,
	\end{cases}
	\]
	are such that $\gamma^h(t)\in S_\nu$ for any $t\in [0,T]$. Since $d(\gamma^h(t),0)\leq Mt$ for every $t\in [0,T]$, there exists a compact set $K\subseteq \G$ for which 
	\[
	\bigcup_{h\in \mathbb N} \gamma^h([0,T])\cup \gamma([0,T])\subseteq K.
	\]
	We prove that 
	\[
	\lim_{h\to \infty} d(\gamma^h(T),\gamma(T))=0.
	\]
	It is not restrictive to work in coordinates and compute, for every $ t\in [0,T] $,
	\[
	\begin{aligned}
	|\gamma^h(t)-\gamma(t)|&=\left|\int_0^t\sum_{i=1}^m \left(u_i^h(\tau)X_i(\gamma^h(\tau))-u_i(\tau)X_i(\gamma(\tau))\right)\:d\tau\right|\\
	&\leq \int_0^t\sum_{i=1}^m |u_i^h(\tau)|\left|X_i(\gamma^h(\tau))-X_i(\gamma(\tau))\right|\:d\tau\\
	&\hphantom{\leq}+ \int_0^t\sum_{i=1}^m \left|u_i^h(\tau)-u_i(\tau)\right| |X_i(\gamma(\tau))|\:d\tau.
	\end{aligned}
	\]
	Notice that, by the choice of $u_i^h$, the term
	\[
	\alpha_h(t)\coloneqq\int_0^t\sum_{i=1}^m \left|u_i^h(\tau)-u_i(\tau)\right| |X_i(\gamma(\tau))|\:d\tau
	\]
	is infinitesimal as $h\to \infty$, and that, by the smoothness of $X_1,\dots,X_n$ and letting
	\[
	C \coloneqq \sup_{i=1,\dots,m} \mathrm{Lip}(X_i)(K) 
	\] and recalling the definition of $ M $, we have
	\[
	|\gamma^h(t)-\gamma(t)|\leq \alpha_h(t)+CM\int_0^t\left|\gamma^h(\tau)-\gamma(\tau)\right|\:d\tau,
	\]
	for all $ t\in [0,T] $.
	We are then in a position to apply Gr\"onwall Lemma to get 
	\[
	|\gamma^h(T)-\gamma(T)|\leq \alpha_h(T)e^{CMT},
	\]
	and letting $h\to \infty$ and by the arbitrariness of $ T $, we conclude the proof.
\end{proof}
\noindent Before proving Lemma~\ref{l:cn:contains:cone}, we make the following remark.
\begin{rmk}\label{rmk:dRdL}
 Let $ \G $ be a Carnot group and let $ d_L $ and $ d_R $ be the left- and right-invariant Carnot-Carathéodory distances on $ \G $ built with respect to the same scalar product on $\g_1$. Then the inversion is an isometry between $(\G,d_L)$ and $(\G,d_R)$. In particular, for every $p\in \G$ we have that $d_L(0,p)=d_L(p^{-1},0)=d_R(p,0)=d_R(0,p)$.
\end{rmk}
 Given $ p\in \G $ and a measurable set  $ F \subseteq \G$, we define the {\em lower and upper densities} $\theta_*(p,F)$ and $\theta^*(p,F)$ of $F$ at $p$ letting
\[
 \theta_*(p,F)\coloneqq \liminf_{r\to0}\frac{\mu(F\cap B(p,r))}{\mu(B(p,r))}\qquad\theta^*(p,F) \coloneqq \limsup_{r\to 0}\dfrac{\mu( F\cap B(p,r))}{\mu(B(p,r))}\,.
\]
The {\em measure theoretic boundary} $\partial^*F$ of $F$ is defined by
\[
\partial^*F\coloneqq\{p\in \G: \theta_*(p,F)>0 \text{ and } \theta^*(p,F)<1\}.
\]
The Lebesgue representative $\widetilde F$ of $F$ is defined by 
\begin{equation}
\label{Leb:rep}
\widetilde F\coloneqq \{p\in \G: \theta_*(p,F)=1\}.
\end{equation}
Notice that, by the Lebesgue's theorem, we know that $\widetilde F=F$ up to a set of $\mu$-measure zero. \\
The following proposition is proved in \cite[Proposition 3.6]{BLD19}
\begin{prop}\label{prop:enricoboundary}
	Let $F\subseteq \G$ be a set with $\nu$ as constant horizontal normal. Then 
	\[
	\mathcal F\widetilde F=\partial \widetilde F=\partial^*\widetilde F.
	\]
\end{prop}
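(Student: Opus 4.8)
The plan is to show the chain of equalities $\mathcal F\widetilde F=\partial\widetilde F=\partial^*\widetilde F$ by exploiting the fact that $F$ has $\nu$ as constant horizontal normal, which forces the density of $F$ to be essentially rigid along the $\nu$-direction. The inclusions $\mathcal F\widetilde F\subseteq\partial^*\widetilde F$ and $\partial\widetilde F\subseteq\partial\widetilde F$ are, respectively, standard and trivial, so the real content is (a) $\partial^*\widetilde F\subseteq\partial\widetilde F$, i.e.\ that the topological boundary of the Lebesgue representative coincides with its measure-theoretic boundary, and (b) $\partial\widetilde F\subseteq\mathcal F\widetilde F$, i.e.\ that at every topological boundary point the reduced-boundary normal exists and has unit length.

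First I would record that, since $F$ has constant horizontal normal $\nu$, the semigroup inclusion from Lemma~\ref{l:cn:contains:cone} (the ``minimality'' of $S_\nu$, or equivalently the statement that $\widetilde F$ contains a translated copy of $S_\nu$ around each of its density-one points, and its complement contains a translated copy of $S_{-\nu}$ around each density-one point of the complement) gives a clean dichotomy: for $p\in\G$, either $p\cdot\mathrm{int}(S_{-\nu})\subseteq\widetilde F$ up to null sets, or $p\cdot\mathrm{int}(S_\nu)\subseteq\G\setminus\widetilde F$ up to null sets, depending on which density is positive; and since $\mathrm{int}(S_\nu)$ and $\mathrm{int}(S_{-\nu})$ are non-empty open cones, this pins down $\theta_*(p,\widetilde F)$ and $\theta^*(p,\widetilde F)$ to be $0$ or $1$ at every point where one of them is extremal. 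Concretely, this yields: if $\theta_*(p,\widetilde F)>0$ then $\theta_*(p,\widetilde F)=1$ (so $p\in\widetilde F$), and symmetrically if $\theta^*(p,\widetilde F)<1$ then $\theta^*(p,\widetilde F)=0$ (so $p\notin\overline{\{\theta_*>0\}}$ in a quantitative sense). From this, $\partial^*\widetilde F=\{p:\theta_*(p,\widetilde F)=1\text{ and }\theta^*(p,\widetilde F)=0\}$ cannot happen, so one must instead argue that $\partial^*\widetilde F$ consists exactly of the points where the density is neither $0$ nor $1$ — which by the dichotomy are precisely the points that are limits of both density-one and density-zero points, i.e.\ topological boundary points of $\widetilde F$ (which is, by the dichotomy, relatively open and closed away from its boundary). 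This is the step I expect to be the main obstacle: making precise the passage from the pointwise density dichotomy to the statement that $\widetilde F$ is topologically ``clopen modulo its boundary'' and that $\partial\widetilde F$ is exactly the transition set, being careful that $S_\nu$ need not contain any horizontal directions so one only controls densities in the cone directions, not in all directions.

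Once $\partial^*\widetilde F=\partial\widetilde F$ is established, the inclusion $\partial\widetilde F\subseteq\mathcal F\widetilde F$ follows by showing that at each $p\in\partial\widetilde F$ the blow-up of $F$ is controlled: using dilation-invariance of $S_\nu$ together with the cone containments, one sees that every tangent of $\widetilde F$ at $p$ contains $p\cdot\mathrm{int}(S_{-\nu})$ and is disjoint from $p\cdot\mathrm{int}(S_\nu)$, so the perimeter measure $|\D\mathbb 1_{\widetilde F}|$ has positive lower density at $p$ (the reduced boundary is non-degenerate there) and $\nu_{\widetilde F}(p)$ exists and equals $\nu$ — in particular has unit norm. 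The remaining inclusion $\mathcal F\widetilde F\subseteq\partial^*\widetilde F$ is the classical fact that reduced-boundary points have positive lower density and upper density strictly below one, which holds in the Carnot group setting as well (lower bounds on the perimeter ratio force both densities to be bounded away from the extremes). Assembling: $\mathcal F\widetilde F\subseteq\partial^*\widetilde F=\partial\widetilde F\subseteq\mathcal F\widetilde F$, closing the loop. I would lean on \cite{BLD19} and on Proposition~\ref{prop:Snu:has:CN} and Lemma~\ref{l:curves:Snu} for the cone containments, and cite the general perimeter theory in \cite{AKLD} for the density estimates at reduced-boundary points.
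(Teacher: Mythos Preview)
First, note that the paper does not prove this proposition at all: it simply quotes it as \cite[Proposition~3.6]{BLD19}. So there is no ``paper's own proof'' to compare against beyond that citation.

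Your outline has the right ingredients but contains a false intermediate claim and leaves the decisive step unfilled. The assertion ``if $\theta_*(p,\widetilde F)>0$ then $\theta_*(p,\widetilde F)=1$'' is wrong: at every point of $\partial^*\widetilde F$ the lower density is positive yet strictly below $1$, and you seem to notice this yourself a few lines later. What Lemma~\ref{l:cn:contains:cone} (together with its version for $\G\setminus F$) actually yields is a \emph{trichotomy}: at each $p$ either $\theta_*=\theta^*=0$, or $\theta_*=\theta^*=1$, or $c\le\theta_*\le\theta^*\le 1-c$ with $c=\mu(S_\nu\cap B_1)/\mu(B_1)>0$. The real work---which you correctly flag as ``the main obstacle'' but do not carry out---is to show that $\widetilde F$ and $\widetilde{\G\setminus F}$ are \emph{open}; without this you cannot identify the third class with $\partial\widetilde F$. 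The missing step is short once seen: from $\theta^*(p,F)>0$ one gets $p\cdot\mathrm{int}(S_\nu)\subseteq\widetilde F$, while any $p\in\widetilde F$ has full density and therefore $\widetilde F$ meets the open cone $p\cdot\mathrm{int}(S_{-\nu})=p\cdot\mathrm{int}(S_\nu)^{-1}$; together these give $\widetilde F=\widetilde F\cdot\mathrm{int}(S_\nu)$, which is open. Finally, your blow-up route to $\partial\widetilde F\subseteq\mathcal F\widetilde F$ is heavier than needed: since $X\mathbb 1_F=0$ for $X\in\nu^\perp$ and $\nu\mathbb 1_F\ge 0$, one has $D\mathbb 1_{\widetilde F}=\nu\,|D\mathbb 1_{\widetilde F}|$, hence $\nu_{\widetilde F}\equiv\nu$ wherever the perimeter is non-degenerate and $\mathcal F\widetilde F=\mathrm{supp}\,|D\mathbb 1_{\widetilde F}|$, which equals $\partial\widetilde F$ once openness is established.
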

\noindent Lemma~\ref{l:cn:contains:cone} below will be used in the proof of Lemma~\ref{l:measure:upperbound}.
\begin{lemma}
 \label{l:cn:contains:cone}
 Let $\G$ be a Carnot group, let $ F\subseteq \G $ be a set with $ \nu $ as constant horizontal normal and assume that $ \theta^*(0,F)> 0 $. Then $ \mathbb{1}_{S_\nu} \leq \mathbb{1}_F $ $ \mu $-almost everywhere.
\end{lemma}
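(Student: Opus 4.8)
The plan is to combine the minimality of $S_\nu$ among constant-normal sets (essentially the content of the cited results from \cite{BLD19}) with a blow-up argument at the identity, exploiting the dilation-invariance of $S_\nu$ and the positive upper density hypothesis. Concretely, I would first reduce to the Lebesgue representative: replacing $F$ by $\widetilde F$ changes nothing $\mu$-a.e., and by Proposition~\ref{prop:enricoboundary} we have $\mathcal F\widetilde F=\partial\widetilde F=\partial^*\widetilde F$. The hypothesis $\theta^*(0,F)>0$ then says that $0\in\widetilde F$ or $0\in\partial^*\widetilde F$; in either case $0\notin\G\setminus\widetilde F$ in the sense of density, and more importantly it prevents the blow-up of $\widetilde F$ at $0$ from being empty.

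Next I would perform a blow-up. Consider the rescaled sets $\delta_{1/r}\widetilde F$ as $r\to 0$. Since $F$ has constant horizontal normal $\nu$, so does each $\delta_{1/r}\widetilde F$ (the constant-normal condition is preserved by the intrinsic dilations, because $\nu\mathbb{1}_F\ge 0$ and $X\mathbb{1}_F=0$ for $X\in\nu^\perp$ are homogeneous conditions). By compactness of sets of locally finite perimeter — using that constant-normal sets have perimeter measure controlled and that $\nu\mathbb{1}_F$ is the perimeter — one extracts a subsequence converging in $L^1_{loc}$ to some set $F_0$ which again has $\nu$ as constant horizontal normal. The density hypothesis $\theta^*(0,F)>0$ forces $\theta_*(0,F_0)>0$ or at least that $0$ has positive density in $F_0$, so $F_0$ is nonempty and $0$ has positive density in it. Now invoke the minimality statement quoted right after Definition~\ref{def:introcono} (i.e.\ \cite[Corollary 2.31 or Theorem 2.37]{BLD19}): $S_\nu$ is contained, up to $\mu$-null sets, in every constant-normal set at which $0$ has positive density. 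Hence $\mathbb{1}_{S_\nu}\le\mathbb{1}_{F_0}$ a.e.

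Finally I would transfer this back to $F$ using dilation-invariance of $S_\nu$: since $\delta_r S_\nu=S_\nu$ for all $r>0$, the inclusion $S_\nu\subseteq F_0$ and the convergence $\delta_{1/r_k}\widetilde F\to F_0$ give, after undoing the dilation, that $S_\nu=\delta_{r_k}S_\nu\subseteq$ a set converging to $\widetilde F$; a monotonicity/semicontinuity argument (the relevant point is that $\mu(S_\nu\cap B_R\setminus\delta_{r_k}\widetilde F)\to\mu(S_\nu\cap B_R\setminus F_0)=0$ for every $R$) yields $\mathbb{1}_{S_\nu}\le\mathbb{1}_{\widetilde F}=\mathbb{1}_F$ $\mu$-a.e.

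The main obstacle I anticipate is making the blow-up step fully rigorous: one must ensure the limit set $F_0$ is genuinely nonempty with $0$ of positive density — the density lower bound must survive passage to the limit — and one must have the right compactness/closure theorem for constant-normal sets (uniform local perimeter bounds). A cleaner route, avoiding explicit blow-up, might be to argue directly: for each fixed $r>0$, $\delta_r^{-1}\widetilde F$ has constant normal $\nu$, and if $0$ had positive density in $\widetilde F$ it has positive density in each $\delta_r^{-1}\widetilde F$; applying the minimality result to $\delta_r^{-1}\widetilde F$ would already give $S_\nu\subseteq\delta_r^{-1}\widetilde F$, i.e.\ $\delta_r S_\nu=S_\nu\subseteq\widetilde F$, and we are done without any limiting procedure. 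The only subtlety is whether $\theta^*(0,F)>0$ (upper, not lower) suffices to apply the cited minimality statement, which might be phrased for points of positive density of the Lebesgue representative; if not, a short density/blow-up argument bridges exactly that gap, which is where I expect to spend most of the effort.
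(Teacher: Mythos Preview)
Your proposal has a genuine gap in the blow-up route, and the ``cleaner route'' is essentially circular.

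\textbf{The transfer-back step fails.} From $S_\nu\subseteq F_0$ a.e.\ and $\delta_{1/r_k}\widetilde F\to F_0$ in $L^1_{\mathrm{loc}}$ you only get $\mu(S_\nu\cap B_R\setminus\delta_{1/r_k}\widetilde F)\to 0$. Undoing the dilation, and using $\delta_{r_k}S_\nu=S_\nu$, this becomes $\mu(S_\nu\cap B_{r_kR}\setminus\widetilde F)=r_k^Q\cdot o(1)$, which is an infinitesimal statement about shrinking balls, not $\mu(S_\nu\setminus\widetilde F)=0$. Blow-up only captures the germ of $F$ at $0$; it cannot by itself yield a global inclusion. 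Something else must be used, and that something is exactly what the paper's argument exploits (see below).

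\textbf{The ``cleaner route'' is a restatement, not a proof.} Applying the minimality result from \cite{BLD19} to $\delta_r^{-1}\widetilde F$ is the same as applying it to $\widetilde F$: the dilation changes nothing since density at $0$ is dilation-invariant. So your cleaner route reduces to ``cite \cite[Corollary~2.31 or Theorem~2.37]{BLD19}'', which is precisely the statement of the lemma; the paper is \emph{reproving} that result here, not quoting it.

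\textbf{What the paper actually does} is far more direct and avoids blow-up and compactness entirely. The key input you are missing is the translation monotonicity: because $F$ has constant normal $\nu$, one has $\mathbb 1_{F\cdot p}\le\mathbb 1_F$ a.e.\ for every $p\in S_\nu$ (this is \cite[Lemma~3.1]{BLD}). Using right-invariant balls $B^R$ (recall $d_L(0,\cdot)=d_R(0,\cdot)$ and $\mu$ is bi-invariant), this gives
\[
\mu(F\cap B^R(0,r))=\mu(F\cdot p\cap B^R(p,r))\le\mu(F\cap B^R(p,r))
\]
for all $p\in S_\nu$. Hence $\theta^*(0,F)>0$ forces the upper $d_R$-density of $F$ to be positive at \emph{every} $p\in S_\nu$. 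Lebesgue differentiation (with respect to $d_R$) then says this density is $1$ at a.e.\ point, and the lemma follows. The translation property $F\cdot S_\nu\subseteq F$ is what converts the single density hypothesis at $0$ into a density statement at every point of $S_\nu$; your blow-up argument never invokes it, which is why the global conclusion cannot be reached.
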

\begin{proof}
	Recall first that every left-invariant Haar measure of a Carnot group is right-invariant, being Carnot groups nilpotent and therefore unimodular. \\ Since $F$ has  $ \nu $ as constant horizontal normal, by \cite[Lemma 3.1]{BLD} we have that, 
 \[
 \mathbb 1_{F\cdot p}\leq \mathbb 1_F\quad \text{for every $p\in S_\nu$,\quad $\mu$-almost everywhere.}
 \]
 
 If $B^L(p,r)$ and $B^R(p,r)$ denote, respectively, the metric balls built with respect to the left-invariant and right-invariant metrics, then for all $p\in S_\nu$
 \[
 \begin{aligned}
 \mu (F\cap B^L(0,r))\stackrel{\text{Rem.}\,\ref{rmk:dRdL}}{=}\mu( F\cap B^R(0,r)) &=\mu( F\cdot p \cap B^R(p,r))\\ &\leq \mu( F\cap B^R(p,r))\,.
 \end{aligned}
 \]
 Since $\theta^*(0,F)>0$, we deduce that for all $p\in S_\nu$,
 \begin{equation}\label{eq:density}
 \theta_{ F} ^R(p)\coloneqq \limsup_{r\to 0}\frac{\mu( F\cap B^R(p,r))}{\mu(B^R(p,r))}>0.
 \end{equation}
 On the other hand, by Lebesgue Theorem we have that $\theta_{F}^R(p)\in \{0,1\} $ for $\mu$-almost every $p\in \G$. By  \eqref{eq:density} we then get that $\mu$-almost every $p\in S_\nu$ is a Lebesgue point for $F$, hence for almost every $p\in S_\nu$ one also has $p\in F$, as required.
\end{proof}

In the following lemma, we denote by $\R^* O(n)$ the set of $n\times n$ matrices $A$ that can be written as
\[
A=\lambda B, 
\]
for some $\lambda\in\R\setminus \{0\}$ and some $B\in O(n)$.

\begin{lemma}\label{prop:opensets}
 Let $n\in \N$. Then the following facts hold.
 \begin{itemize}
  \item[(i)] Let $ H $ be a subgroup of $ GL(n) $. Then for every $ p\in \Rn $ and every open neighborhood $U$ of $ p $, there exists a neighborhood $M\subseteq H$ of the identity matrix such that
  \[
  \bigcap_{\ell\in M} \ell U
  \]
  is a neighborhood of $ p $.
  \item[(ii)] For every $p\in \R^n\setminus \{0\}$ and every open neighborhood $M\subseteq \R^* O(n)$ of the identity matrix, the set $Mp$ is an open neighborhood of $p$.
 \end{itemize}
\end{lemma}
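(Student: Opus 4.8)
The plan is to prove both parts as elementary facts about the group $\mathbb{R}O(n)$ acting on $\R^n$, since neither involves the Carnot structure. Throughout I will identify a matrix $A = \lambda B$ with $B \in O(n)$, $\lambda > 0$ (we may take $\lambda > 0$ WLOG near the identity), and I will use that the map $\mathbb{R}O(n) \times \R^n \to \R^n$, $(\ell, x) \mapsto \ell x$, is continuous, and that each $\ell$ is a homeomorphism of $\R^n$. It will be convenient to fix a norm on $\R^n$; note that for $\ell = \lambda B$ one has $|\ell x| = \lambda |x|$ and $|\ell^{-1} x| = \lambda^{-1}|x|$, so $\ell$ distorts distances by exactly the factor $\lambda$.

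For part (ii): fix $p \in \R^n \setminus\{0\}$ and an open neighborhood $M \subseteq \mathbb{R}O(n)$ of the identity. First, $p = \mathrm{Id}\cdot p \in Mp$, so $Mp$ contains $p$. To see $Mp$ is open, I would show every point of $Mp$ is interior. Take $q = \ell p$ with $\ell \in M$. The map $\R O(n) \to \R^n$, $A \mapsto Ap$, restricted to a neighborhood where $|A|$ stays bounded away from $0$, is an open map onto its image when $p \neq 0$: indeed near $\ell$ one can write a candidate preimage explicitly. Concretely, for $x$ close to $q$, I want $A \in M$ with $Ap = x$; since $p \neq 0$, the set of $A \in \R O(n)$ with $Ap = x$ is a nonempty coset of the stabilizer of $p$ (a conjugate of $O(n-1)$), and this coset varies continuously with $x$, so for $x$ near $q$ it meets the open set $M$. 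Hence a whole neighborhood of $q$ lies in $Mp$. Alternatively, and more cleanly: the stabilizer $H$ of the line $\R p$ together with scalings acts transitively on $\R^n \setminus \{0\}$, so $\R O(n) \setminus\{A : Ap = 0\} \to \R^n\setminus\{0\}$ is a fiber bundle projection, in particular open; restricting to $M$ gives that $Mp$ is open.

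For part (i): fix $p$ and a neighborhood $U$ of $p$. Choose $r > 0$ with the closed ball $\overline{B}(p, 2r) \subseteq U$ (and, if $p = 0$, this already forces $U$ to be a neighborhood of $0$, and then $\ell U \supseteq \ell B(0,r')$ is a ball around $0$ for every $\ell$, so any bounded $M$ works — but I will handle the general $p$ uniformly). The idea: if $\ell = \lambda B$ is close enough to the identity, then $\ell^{-1}$ moves points of $B(p, r)$ by a controlled amount. Precisely, $|\ell^{-1} x - x| \le |\ell^{-1} x - \ell^{-1}(\ell x)|$... rather, estimate $|\ell^{-1} x - x| = |\ell^{-1}(x - \ell x)| = \lambda^{-1}|x - \ell x|$, and $|x - \ell x| \le |x - Bx| + |Bx - \lambda B x| = |x - Bx| + |\lambda - 1|\,|x|$; both terms are small for $\ell$ near $\mathrm{Id}$ and $x$ in a bounded set. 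So choose an open neighborhood $M \subseteq \R O(n)$ of the identity small enough that $|\ell^{-1} x - x| < r$ for all $\ell \in M$ and all $x \in \overline{B}(p, 2r)$. Then for any $y \in B(p, r)$ and any $\ell \in M$, the point $x := \ell y$ satisfies: either $|x - p| \ge 2r$ — impossible, since then $|\ell^{-1} x - x| \ge |p - x| - |p - \ell^{-1}x| = |p-x| - |p - y| > 2r - r = r$, contradicting the choice of $M$ — or $|x - p| < 2r$, in which case $|y - \ell^{-1} x| $ ... hmm, let me instead argue directly that $B(p,r) \subseteq \ell U$: given $y \in B(p,r)$, I need $y \in \ell U$, i.e. $\ell^{-1} y \in U$; but $\ell^{-1} y$ — we need $\ell^{-1}$ applied to a point near $p$, and since $y \in \overline{B}(p, 2r)$ we get $|\ell^{-1} y - y| < r$, hence $|\ell^{-1} y - p| \le |\ell^{-1}y - y| + |y - p| < r + r = 2r$, so $\ell^{-1} y \in \overline{B}(p,2r) \subseteq U$. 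Therefore $B(p, r) \subseteq \ell U$ for every $\ell \in M$, so $B(p,r) \subseteq \bigcap_{\ell \in M} \ell U$, which shows the intersection is a neighborhood of $p$.

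The main obstacle — really the only subtle point — is part (ii): showing $Mp$ is open. Continuity of the action gives $Mp \subseteq$ interior easily in one direction but openness requires knowing the orbit map $A \mapsto Ap$ is open near $A = \mathrm{Id}$, which uses $p \neq 0$ in an essential way (for $p = 0$ the claim is false, as $M\cdot 0 = \{0\}$). I would make this rigorous via the submersion/fiber-bundle viewpoint, or by exhibiting an explicit local section of $A \mapsto Ap$: given $q = \ell p$, pick a smooth family $x \mapsto R_x \in SO(n)$ of rotations with $R_q(q/|q|) \cdot$ (something) — concretely $R_x$ carrying $q$ to $x$ depending continuously on $x$ with $R_q = \mathrm{Id}$ — and set $A_x := (|x|/|q|)\, R_x\, \ell$; then $A_x p = x$, $A_q = \ell \in M$, and $x \mapsto A_x$ is continuous, so $A_x \in M$ for $x$ near $q$, whence a neighborhood of $q$ lies in $Mp$. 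Part (i) is then routine once the uniform estimate on $|\ell^{-1}x - x|$ over a bounded set is set up as above.
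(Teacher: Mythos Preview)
Your proposal is correct and follows essentially the same approach as the paper. For part (i), the paper likewise shows $B(p,r)\subseteq \ell U$ by estimating $|\ell^{-1}x - x|$ uniformly on a bounded ball (using the operator norm $\|\ell^{-1}-\mathrm{Id}\|$ directly rather than your rotation--plus--scaling decomposition); for part (ii), the paper invokes Helgason's theorem on transitive actions to conclude that the orbit map $\ell\mapsto \ell p$ is open on $\R^n\setminus\{0\}$, which is exactly the content of your fiber-bundle/local-section argument.
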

\begin{proof}
 $(i)$ Consider $r, R\in (0,+\infty)$ and $ p\in \Rn $ such that $B(p,2r)\subseteq   U$ and $B(p,r)\subseteq B(0,R )$. 
We consider in $H$ the distance coming from the usual operator norm.
 We define 
 $M\coloneqq B(\mathrm{Id},r/R)^{-1}\subseteq H$. Then, take any $\ell\in M$, so that $\|\ell^{-1}-\mathrm{Id}\| <  r/R$. Then we have that for all $x\in B(0,R)$
 \[
 |\ell^{-1}(x)-x|\leq \|\ell^{-1}-\mathrm{Id}\|\, |x|\leq \frac{r}{R} R = r.
 \]
Consequently, by triangle inequality if $x\in B(p,r)$, and so also $x\in  B(0,R)$, we have that 
$$ |\ell^{-1}(x)-p|  \leq |\ell^{-1}(x)-x| + |x-p| \leq 2r.$$
Therefore, we showed that $\ell^{-1}(B(p,r))\subseteq B(p,2r)\subseteq U$, that is, $B(p,r)\subseteq \ell U$ for every $ \ell \in M $.
Hence, we infer that $B(p,r)\subseteq  \bigcap_{\ell\in M} \ell U$.
 
 $(ii)$ Consider the scaled orthogonal transformations $ \R^* O(n) $ acting continuously and transitively on $ \R^n\setminus\{0\} $. Fix $p\in \R^n\setminus \{0\}$ and let us denote by $(\R^* O(n))_p$ the stabilizer subgroup of $p$. By \cite[Theorem 3.2]{Helgason}, the mapping 
 \begin{align*}
 \psi \colon {\R^* O(n)}/{\left(\R^* O(n)\right)_p} &\to \R^n\setminus \{0\} \\
[\ell] \longmapsto&\ell(p) 	
 \end{align*}
 is a well-defined homeomorphism. Hence, since the projection $ \pi \colon \R^* O(n)\to  {\R^* O(n)}/{\left(\R^* O(n)\right)_p} $ is open, the map $\ell\mapsto \ell(p)$ obtained as $ \psi \circ \pi $ is open.
\end{proof} 
\begin{prop}\label{prop:cone:continuity}
 Let $\G$ be a Carnot group. Then, for every $\nu\in \mathbb S(\g_1)$, there exists an open neighborhood $U$ of $\nu$ in $\mathbb S (\g_1)$ such that 
 \[
 \bigcap_{\mu\in U} S_\mu
 \]
 has non-empty interior. Moreover, if $\nu \in \mathbb S(\g_1)$ and $X\in \g_1$ are such that $\exp(X)\in \mathrm{int}(S_\nu)$, then one can choose $U$ in such a way that $X\in \mathrm{int}( \bigcap_{\mu\in U} S_\mu)$.
\end{prop}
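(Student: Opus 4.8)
The plan is to exploit the symmetry of the semigroup construction under the action of orthogonal transformations of $\g_1$, using the continuity/openness statements packaged in Lemma~\ref{prop:opensets}. The key observation is that if $A \in O(\g_1)$ is an orthogonal map and $\Phi_A$ denotes the unique graded automorphism of $\G$ extending $A$ (which exists because $A$ preserves the bracket structure on the first layer, hence extends to a Lie algebra automorphism that is graded, and then lifts to $\G$), then $\Phi_A(S_\nu) = S_{A\nu}$. Indeed, $\Phi_A$ maps $\exp(\nu^\perp + \R^+\nu)$ to $\exp((A\nu)^\perp + \R^+(A\nu))$ since $A$ is orthogonal, and $\Phi_A$ is a group homomorphism so it commutes with taking the generated semigroup. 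Thus the family $\{S_\mu : \mu \in \mathbb S(\g_1)\}$ is permuted by the graded automorphism group in a way compatible with the $O(\g_1)$-action on the sphere.

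First I would fix $\nu$ and pick any $X_0 \in \g_1 \setminus \{0\}$ with $\exp(X_0) \in \Int(S_\nu)$; such an $X_0$ exists because $S_\nu$ has non-empty interior and is a cone invariant under the flows of the vector fields in $\nu^\perp \cup \{\nu\}$, so standard Geometric Control Theory (as cited after Definition~\ref{def:introcono} and in Lemma~\ref{l:curves:Snu}) produces interior points of this form — more precisely, one reaches an open set by concatenating horizontal moves, and dilation-invariance lets one place such a point arbitrarily close to $0$, hence of the form $\exp(X_0)$. For the ``moreover'' part where $X$ is prescribed, I take $X_0 = X$ directly. Next I transfer to the Lie algebra via $\exp$, working in exponential coordinates of the first kind, where $\Int(S_\nu)$ corresponds to an open cone in $\R^n$ and the automorphisms $\Phi_A$ act linearly. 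Concretely, let $U_0 := \exp^{-1}(\Int S_\nu)$, an open neighborhood of $X_0$ in $\R^n$ (identifying $\G$ with $\R^n$ via $\exp$ and a graded basis).

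Now I apply Lemma~\ref{prop:opensets}(i) with $p = X_0$ and $U = U_0$: there is a neighborhood $M \subseteq \R O(n)$ of the identity such that $\bigcap_{\ell \in M} \ell U_0$ is a neighborhood of $X_0$. I only need the orthogonal part, so I restrict $M$ to $M \cap O(n)$, and further restrict so that every $\ell \in M$ is (the coordinate representative of) a graded automorphism $\Phi_{A}$ for $A = \ell|_{\g_1} \in O(\g_1)$ — this is where one uses that near the identity, orthogonal maps of $\g_1$ that extend to graded automorphisms form a neighborhood; in fact every $A \in O(\g_1)$ extends, since a linear map on $\g_1$ extends to a graded Lie algebra automorphism whenever it respects the brackets, and the subgroup of such $A$ is closed, so one must check it contains a neighborhood of $\mathrm{Id}$ in $O(\g_1)$ — this holds because $O(\g_1)$ acts by conjugation and the obstruction is rigid, OR, more safely, one simply intersects with this closed subgroup and notes it is a Lie subgroup of positive dimension only if it contains a neighborhood; to sidestep this subtlety I would instead phrase the whole argument directly over the group $A(\g_1) := \{A \in O(\g_1) : A \text{ extends to a graded automorphism}\}$, which is a compact subgroup, prove the analogue of Lemma~\ref{prop:opensets}(i)–(ii) for the transitive-on-its-orbit action of $\R A(\g_1)$, and take $M$ inside it. Then for $A = \ell|_{\g_1}$, the condition $X_0 \in \ell^{-1} U_0 = \Phi_{A^{-1}}(\Int S_\nu) = \Int(S_{A^{-1}\nu})$ says $\exp(X_0) \in \Int S_{A^{-1}\nu}$. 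Letting $U := \{A^{-1}\nu : A|_{\g_1} \in M \cap A(\g_1)\} \subseteq \mathbb S(\g_1)$, which is an open neighborhood of $\nu$ by Lemma~\ref{prop:opensets}(ii) applied on the sphere (the orbit map is open), we get $\exp(X_0) \in \bigcap_{\mu \in U} \Int S_\mu \subseteq \Int\big(\bigcap_{\mu \in U} S_\mu\big)$ — wait, the intersection of interiors need not be open, but it contains the common point $\exp(X_0)$ together with $\bigcap_{\ell \in M} \ell U_0$ which is a genuine neighborhood of $X_0$ after pushing forward by $\exp$; so $\exp\big(\bigcap_{\ell\in M}\ell U_0\big) \subseteq \bigcap_{\mu\in U} S_\mu$ exhibits a non-empty interior, and $X_0$ (equivalently $X$) lies in it.

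The main obstacle I anticipate is precisely the algebraic bookkeeping of which orthogonal transformations of $\g_1$ extend to graded automorphisms of $\G$, and ensuring there are enough of them (a full neighborhood of the identity, or at least a positive-dimensional family) to make the intersection $\bigcap_\mu S_\mu$ fat rather than degenerate. In the worst case the extendable orthogonal maps form only a small subgroup; then $U$ would be an open neighborhood of $\nu$ inside a lower-dimensional orbit, which is not what we want. I expect the resolution is that $A(\g_1)$ need not be small — but if it is, one falls back to the first, non-``moreover'' assertion by a direct compactness/continuity argument: the map $\nu \mapsto S_\nu$ is ``lower semicontinuous'' in the sense that interior points persist under small perturbations of $\nu$, which can be proven from Lemma~\ref{l:curves:Snu} by perturbing the controlling halfspace $\{\langle\cdot,\nu\rangle \ge 0\}$ slightly and noting that a control strategy reaching a point strictly inside $S_\nu$ with uniform ``margin'' $\langle\dot\gamma,\nu\rangle \ge c > 0$ still satisfies $\langle\dot\gamma,\mu\rangle \ge 0$ for $\mu$ close to $\nu$; then a finite subcover of $\mathbb S(\g_1)$ is unnecessary since we only need one direction $\nu$ at a time. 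This control-theoretic perturbation argument is likely the cleaner route and the one I would write up, reserving the automorphism argument as motivation.
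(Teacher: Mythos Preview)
Your automorphism strategy and the identity $\Phi_A(S_\nu)=S_{A\nu}$ are exactly the right starting point, but you correctly identify the obstacle and then fail to resolve it: in a general Carnot group $\G$, the set $A(\g_1)\subseteq O(\g_1)$ of orthogonal maps extending to graded automorphisms can be a proper (even discrete) subgroup, so restricting $M$ to it may collapse $U$ to $\{\nu\}$. The paper's fix is to lift to the \emph{free} Carnot group $\mathbb F$ of the same rank and step. In $\mathbb F$ every linear map on the first layer extends uniquely to a graded automorphism, so \emph{all} of $O(m)$ acts and your argument (via Lemma~\ref{prop:opensets}) runs there verbatim, producing an open $W$ and a neighborhood $M\subseteq O(m)$ with $\bigcap_{\ell\in M}\ell W$ open. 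One then pushes down by the canonical projection $\pi\colon\mathbb F\to\G$: since $\pi$ is an open homomorphism and $\pi_*$ restricts to an isometry $\mathfrak f_1\to\g_1$, one gets $\pi(\ell W)\subseteq \pi(\ell S_{\tilde\nu})=S_{\pi_*(\ell\tilde\nu)}$, and setting $U:=\pi_*(M\tilde\nu)$ yields the conclusion. The free-group lift is the missing idea.

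Two secondary points. First, your claim that one can always find $X_0\in\g_1$ with $\exp(X_0)\in\Int(S_\nu)$ is false: the paper itself notes that in the Cartan group $\mathbb F_{2,3}$ one has $\exp(\mathfrak f_1)\cap\Int(S_\nu)=\emptyset$ for every $\nu$. (Your argument does not actually need $X_0$ horizontal for the first assertion; any interior point suffices once you phrase things correctly.) Second, your fallback control-theoretic route is viable in principle---e.g.\ via a shrunk semigroup $S_\nu^\epsilon$ generated by $\exp(\{v\in\g_1:\langle v,\nu\rangle\ge\epsilon|v|\})$, which sits inside every $S_\mu$ with $\mu$ close to $\nu$---but you have not supplied the nontrivial step that $S_\nu^\epsilon$ has nonempty interior (the generating cone is no longer a half-space), nor, for the ``moreover'' clause, that $\exp(X)\in\Int(S_\nu^\epsilon)$ for small $\epsilon$. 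The free-group lift avoids all of this.
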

\begin{proof}
 Denote by $\mathbb F$ the free Carnot group of both the same rank and step of $\G$ (for a definition of free Lie algebra, see e.g.\ \cite[p.\ 45]{VSC} or \cite[p.\ 174]{Varadarajan}).
 Then, the canonical projection $\pi\colon\mathbb F\rightarrow \G$ induces a surjective isometry between the horizontal layers of the Lie algebras $\pi_*\colon\mathfrak f_1\rightarrow \g_1$\footnote{It is understood that the metric on $\g_1(\mathbb F)$ is the pull-back metric of the metric on $\g_1(\G)$}. Define $\widetilde\nu\in \mathfrak f_1$ such that $\pi_*(\widetilde\nu)=\nu$, and consider a non-empty open set $W\subseteq S_{\widetilde \nu}$, which exists by  \cite[Proposition 2.26]{BLD19}. The canonical action of $\R O(m)$ on $ \mathfrak f_1$ can be extended linearly to a map in $\mathrm{Aut}(\mathfrak f)$, since in free groups all non-horizontal left-invariant vector fields can be written in a (essentially) unique way as commutators of horizontal ones. We denote by $ H < \mathrm{Aut}(\mathfrak f) $ this group of automorphisms of $ \mathfrak f $ induced by the action of $\R O(m)$ on $ \mathfrak f_1$. Since automorphisms of $ \mathfrak f $ are linear bijections, we may interpret $ H $ as a subgroup of $ GL(n) $, where $ n $ is the topological dimension of $ \mathfrak f $. 
 
 Considering the corresponding Lie group automorphisms, with abuse of notation, we then identify the actions $\R O(m)\times \mathfrak f_1\rightarrow \mathfrak f_1$ and $\R O(m)\times \mathbb F\rightarrow \mathbb F$. Notice that these actions are continuous and open. By (i) of Lemma~\ref{prop:opensets} there exists an open neighborhood $M $ of the identity in $H$ such that
 \[
 \bigcap_{\ell\in M}\ell W
 \]
 has non-empty interior. Moreover, since $ H $ restricted to $ \mathfrak f_1 $ is isomorphic to $ \R^*O(m) $, by (ii) of Lemma~\ref{prop:opensets} the set $ M\widetilde{\nu}$ is an open neighborhood of $ \widetilde{\nu} $ in $ \mathfrak f_1 $. Set now $U\coloneqq\pi_*(M\widetilde\nu)$. Since $\pi_*$ is open, $U$ is open and for every $\mu \in U$ we can find $m_0\in M$ such that $\mu=\pi_*(m_0\widetilde \nu)$. By properties of orthogonal matrices, notice that we also have $\pi_*((m_0\widetilde \nu)^\perp)=\mu^\perp$. Since $ \pi $ is open as well, the inclusions
 \begin{equation}\label{eq:inclusioni}
 \begin{aligned}
 \pi\left(\mathrm{int}\bigcap_{\ell\in M}\ell W\right)&\subseteq \pi\left(m_0W\right)\subseteq \pi \left(m_0 S_{\widetilde\nu}\right)=\pi(m_0(S(\exp(\widetilde\nu^\perp+\R^+\widetilde \nu)))) \\
 &=\pi S(\exp((m_0\widetilde \nu)^\perp+\R^+m_0\widetilde \nu))=S(\exp(\mu^\perp+\R^+\mu))=S_\mu
 \end{aligned}
 \end{equation}
 conclude the first part of the proof.
 
 Assume now that $\exp(X)\in \mathrm{int}(S_\nu)$. Then, choosing $ \widetilde{X} \in \mathfrak f_1$  and an open set $ W \subseteq S_{\widetilde{\nu}} $  such that $ \pi_*(\widetilde{X}) = X $ and $ \exp(\widetilde{X}) \in W $, one can repeat the argument of the previous part of the proof with the additional condition that, by (i) of Lemma~\ref{prop:opensets},
 \[
 \exp(\widetilde X)\in\mathrm{int}\left (\bigcap_{\ell\in M}\ell W\right ).
 \]
 The proof is again finished by \eqref{eq:inclusioni}.
\end{proof}
For any subset $ A \subseteq \Sp(\G) $, we define
\begin{equation}\label{conogelato}
C(A) \coloneqq \{ \delta_r(a) : a \in A,\, r> 0 \}.
\end{equation} 
Notice that $C(A)$ is a cone.
\begin{rmk}\label{rmk:conecompactness}
 As an immediate consequence of Proposition~\ref{prop:cone:continuity}, we notice that there exist $N\in\N$ and a finite family of triples $(U_i,\Omega_i,K_i)_{i=1}^N$ such that
 \begin{enumerate}
  \item $(U_i)_{i=1}^N$ is an open covering of $\Sp(\g_1)$;
  \item for every $i=1,\dots,N$, $\Omega_i$ is a nonempty open subset of $\Sp(\G)$ such that
  \[
  C(\Omega_i)\subseteq\bigcap_{\nu\in U_i} S_\nu;
  \]
  \item for every $i=1,\dots,N$, $K_i\subseteq \Omega_i$ is a compact set with nonempty interior in $\Sp(\G)$.
\end{enumerate} 
  Moreover, if for every $\nu\in \mathbb S(\g_1)$, there exists $X\in \g_1$ such that $\exp(X)\in \mathrm{int}(S_\nu)$, then each compact set $K_i$ can be chosen in such a way that there exists $X_i\in \g_1$ with $\exp(X_i)\in \mathrm{int}(C(K_i))$.
 
\end{rmk}
\section{Proof of Theorem 1.5}\label{sec:main}
In this section we show that, in Carnot groups, the reduced boundary of sets of locally finite perimeter can be decomposed into countably many pieces satisfying a cone property. This is precisely stated in Theorem~\ref{cor:main}, whose proof and statement complete and combine the previous Lemmata~\ref{l:measure:upperbound},~\ref{positivemeasure}, and~\ref{t:cone:property}. 
\begin{rmk}
	\label{rmk:C_inv_cone_property}
	If a subset $\Gamma$ of a Carnot group  satisfies the $C$-cone property, for some cone $C$, then $\Gamma$ also satisfies the $C^{-1}$-cone property.
Indeed,
	 assume by contradiction that there exist $p\in \Gamma$ and $q\in \Gamma\cap pC^{-1}$. Since $q\in pC^{-1}$, then $p\in qC$. But $\Gamma $ satisfies a $C$-cone property and, since $q\in \Gamma$ we have 
	$
	\Gamma \cap qC=\emptyset.
	$
	This is in contradiction with $p\in \Gamma\cap qC$.
\end{rmk}

\begin{lemma}
 \label{l:measure:upperbound}
 Let $\G$ be a Carnot group. If $ E \subseteq  \G$ is a set of locally finite perimeter in $\G$ and  $ \varepsilon > 0 $, 
 then, for every $ p \in \mathcal{F}E $, 
 there exists $ r_p > 0 $ such that for any $ r \in (0,r_p) $ we have
 \begin{equation*}
 \mu(B_r\cap  S_{\nu_E(p)}\cap  p^{-1}(\mathbb G\setminus E))\leq \varepsilon r^Q
 \end{equation*}
 and 
 \begin{equation*}
 \mu(B_r\cap  S_{\nu_E(p)}^{-1}\cap p^{-1}E)\leq \varepsilon r^Q.
 \end{equation*}
\end{lemma}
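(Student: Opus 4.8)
The plan is to exploit the blow-up behaviour of $E$ at a point $p$ of its reduced boundary together with the minimality property of the semigroup $S_{\nu_E(p)}$ expressed in Lemma~\ref{l:cn:contains:cone}. First I would reduce to the case $p=0$ and $\nu\coloneqq\nu_E(0)$ by left-translating $E$; this is harmless since perimeter, reduced boundary and the normal are all left-invariant, and $p^{-1}E$ has $0$ in its reduced boundary with horizontal normal $\nu$. The heart of the argument is the standard De Giorgi-type blow-up: since $0\in\mathcal FE$, the rescaled sets $E_r\coloneqq\delta_{1/r}(E)$ converge in $L^1_{\mathrm{loc}}$, as $r\to0$, to a limit set $F$ which has $\nu$ as constant horizontal normal (this is the constant-horizontal-normal blow-up theorem of Franchi--Serapioni--Serra Cassano, cited in the introduction via \cite{FSSCHeis, AKLD}), and moreover $0$ has positive upper density for $F$, so $\theta^*(0,F)>0$. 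Exactly the same blow-up applied to the complement $\G\setminus E$ gives convergence of $\delta_{1/r}(\G\setminus E)$ to $\G\setminus F$ (equivalently, $\mathbb 1_{E_r}\to\mathbb 1_F$ in $L^1_{\mathrm{loc}}$).

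Next I would invoke Lemma~\ref{l:cn:contains:cone}: since $F$ has $\nu$ as constant horizontal normal and $\theta^*(0,F)>0$, we get $\mathbb 1_{S_\nu}\le\mathbb 1_F$ $\mu$-a.e., i.e.\ $S_\nu\subseteq F$ up to a $\mu$-null set. Hence
\[
\mu\big(B_1\cap S_\nu\cap(\G\setminus F)\big)=0.
\]
For the second inequality I would observe that $F^{-1}$ (the image of $F$ under group inversion) has $-\nu$ — or, more precisely, the ``reversed'' structure — but the cleaner route is to note that $\G\setminus F$ has $-\nu$ as constant horizontal normal with $\theta^*(0,\G\setminus F)>0$ whenever $0\in\mathcal F F$, which holds here; applying Lemma~\ref{l:cn:contains:cone} to $\G\setminus F$ with normal $-\nu$ gives $S_{-\nu}\subseteq \G\setminus F$ up to null sets, and since $S_{-\nu}=S_\nu^{-1}$ (this identity follows from the definition of $S_\nu$ and the fact that inversion sends $\exp(\nu^\perp+\R^+\nu)$ to $\exp(\nu^\perp+\R^+(-\nu))$, using unimodularity), we obtain $\mu(B_1\cap S_\nu^{-1}\cap F)=0$.

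Finally I would transfer these two ``limit'' identities back to the prelimit sets by a contradiction/compactness argument. Suppose the first inequality of the lemma fails: then there is a sequence $r_j\downarrow0$ with $\mu(B_{r_j}\cap S_\nu\cap(\G\setminus E))>\varepsilon r_j^Q$. Rescaling by $\delta_{1/r_j}$ and using $\delta_{1/r_j}(S_\nu)=S_\nu$ (dilation invariance of the semigroup) together with the scaling $\mu(\delta_{1/r_j}A)=r_j^{-Q}\mu(A)$, this reads $\mu(B_1\cap S_\nu\cap(\G\setminus E_{r_j}))>\varepsilon$, contradicting $\mathbb 1_{E_{r_j}}\to\mathbb 1_F$ in $L^1(B_1)$ combined with $\mu(B_1\cap S_\nu\cap(\G\setminus F))=0$. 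The second inequality is handled identically using $\delta_{1/r_j}(S_\nu^{-1})=S_\nu^{-1}$ and $\mu(B_1\cap S_\nu^{-1}\cap F)=0$. The existence of a single $r_p$ that works for all $r\in(0,r_p)$ (rather than along a subsequence) then follows because the negation of the full statement produces precisely such a sequence.

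The main obstacle I anticipate is making the two blow-up convergences and the density statement $\theta^*(0,F)>0$ fully rigorous with the correct quantifiers: one needs that every blow-up limit $F$ — and there could be several along different sequences $r_j$ — has $\nu$ as constant horizontal normal and satisfies $\theta^*(0,F)>0$, so that Lemma~\ref{l:cn:contains:cone} applies to each of them uniformly; this is what lets the compactness argument close without selecting a distinguished blow-up. The identity $S_{-\nu}=S_\nu^{-1}$, while intuitively clear, also deserves a careful line invoking unimodularity and the explicit description of the generating set.
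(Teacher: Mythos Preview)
Your proposal is correct and follows essentially the same approach as the paper: blow-up at $p$ to a constant-normal set $F$, apply Lemma~\ref{l:cn:contains:cone} to get $S_\nu\subseteq F$ (and $S_\nu^{-1}\subseteq \G\setminus F$), then use dilation invariance of $S_\nu$ and $L^1_{\mathrm{loc}}$-convergence to transfer back. The paper phrases the last step as ``every sequence has a subsequence along which the limit holds, hence the full limit holds'' rather than by contradiction, but this is the same thing; and for the second inequality the paper simply replaces $E$ by $\G\setminus E$ and invokes $S_{-\nu}=S_\nu^{-1}$, which is equivalent to your route through $\G\setminus F$. Regarding the obstacle you flag about $\theta^*(0,F)>0$: the paper handles this by passing to the Lebesgue representative $\widetilde F$ and invoking Proposition~\ref{prop:enricoboundary}, which gives $0\in\mathcal F\widetilde F=\partial^*\widetilde F$ and hence positive (lower and upper) density at $0$.
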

\begin{proof}
  Fix  $p\in \mathcal FE$ and notice that $E$ admits a tangent at $p$ that has, by the same argument of \cite[Theorem 3.1]{FSSC03}, $\nu_E(p)$ as constant horizontal normal. This means that, for any sequence $r_h\to 0$, we can find a set $F$ with constant horizontal normal $\nu_F\equiv \nu_E(p)$ and a subsequence $r_{h_k}$ such that
  \[
  \mathbb 1_{\delta_{r^{-1}_{h_k}}(p^{-1}E)}\rightarrow \mathbb 1_F \text{ in $L^1_{\rm loc}(\G)$}, \qquad \text{as }k\to+\infty.
  \]
  It is not restrictive to assume that $\widetilde F=F$, where $\widetilde F$ is as in \eqref{Leb:rep}. By Proposition~\ref{prop:enricoboundary}, since $0\in \mathcal F \widetilde F$ then $0\in \partial^* \widetilde F$ and by Lemma~\ref{l:cn:contains:cone}, one has
  \[
 \mathbb 1_{\delta_{r^{-1}_{h_k}}(p^{-1}E)\cap S_{\nu_E(p)}} \rightarrow \mathbb 1_{S_{\nu_E(p)}} \text{ in $L^1_{\rm loc}(\G)$}, \qquad \text{as }k\to+\infty.
 \]
Consequently, we have          \[
 \mathbb 1_{\delta_{r^{-1}}(p^{-1}E)\cap S_{\nu_E(p)}} \rightarrow \mathbb 1_{S_{\nu_E(p)}} \text{ in $L^1_{\rm loc}(\G)$},\qquad \text{as }r\to0.
 \]
   This completes the proof of the first statement. The proof of the second inequality follows from the first one by replacing $E$ with $\G\setminus E$ and recalling that $S_{-\nu}=S_{\nu}^{-1}$.
\end{proof}
\noindent
Regarding next lemma, recall the notation $ C(\Omega) $ introduced in \eqref{conogelato}.
\begin{lemma}\label{positivemeasure}
 Let $\G$ be a Carnot group, let $K \subseteq  \Omega \subseteq \Sp(\G)  $ be such that $ K $ is compact and $ \Omega $ is open.
Then
 \[
 \eta(C(\Omega), C(K))\coloneqq\inf \left\{\mu(C(\Omega)\cap B_2\cap \xi C(\Omega)^{-1} \cap \xi B_2): \xi \in K\right\}> 0.
 \]
\end{lemma}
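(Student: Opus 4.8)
The statement asserts that a certain infimum over the compact set $K$ of measures $\mu(C(\Omega)\cap B_2\cap \xi C(\Omega)^{-1}\cap \xi B_2)$ is strictly positive. The natural plan is to combine two facts: (a) each individual measure in the family is positive, and (b) the quantity depends continuously (or at least lower-semicontinuously) on the parameter $\xi\in K$, so that the infimum over the compact set $K$ is attained and hence positive.

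For step (a): fix $\xi\in K\subseteq C(K)$. Since $C(\Omega)$ is open (it is $\pi_*$ of an open set, or more directly it is a dilation-invariant set containing a nonempty open set and is open by Proposition~\ref{prop:cone:continuity}'s construction), both $C(\Omega)\cap B_2$ and $\xi C(\Omega)^{-1}\cap \xi B_2$ are open. The issue is whether their intersection is nonempty. Here I would use that $C(\Omega)$ is a cone with nonempty interior: pick a point $a$ in the interior of $C(\Omega)\cap B_2$ close to $0$, and note $\xi\cdot \delta_\lambda(\text{interior point of }C(\Omega)^{-1})$ for small $\lambda$ accumulates at $\xi$; since $\xi=\delta_r(k)$ for some $k\in K$ with $r>0$, and $C(\Omega)$ is a cone, one can arrange a common point. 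More carefully: $\xi\in \overline{C(\Omega)}$ (as $K\subseteq \Omega$ gives $C(K)\subseteq C(\Omega)$), so $\xi$ lies in the closure of the open set $C(\Omega)\cap B_2$ (using dilation-invariance to stay inside $B_2$, after possibly replacing $\xi$ by $\delta_t\xi$ — but $\xi$ is fixed; instead note $\xi\in C(\Omega)$ itself when $\Omega$ is open, since $C(K)\subseteq C(\Omega)$). Then $C(\Omega)\ni\xi$ is open, so it intersects $\xi C(\Omega)^{-1}$ near $\xi$ iff $0\in\overline{C(\Omega)^{-1}}$, which holds since $C(\Omega)^{-1}$ is a cone. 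The openness then upgrades nonempty intersection to positive measure.

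For step (b): I would set $f(\xi)\coloneqq \mu\big(C(\Omega)\cap B_2\cap \xi C(\Omega)^{-1}\cap \xi B_2\big)$ and show $f$ is lower semicontinuous on $K$. Writing this as $\int \mathbb{1}_{C(\Omega)\cap B_2}(x)\,\mathbb{1}_{C(\Omega)^{-1}\cap B_2}(\xi^{-1}x)\,d\mu(x)$, lower semicontinuity follows from Fatou's lemma applied along any sequence $\xi_j\to\xi$, because $\mathbb{1}_{C(\Omega)^{-1}\cap B_2}$ is lower semicontinuous (both sets being open) and left translation $x\mapsto \xi_j^{-1}x$ converges pointwise; one gets $\liminf_j \mathbb{1}_{C(\Omega)^{-1}\cap B_2}(\xi_j^{-1}x)\geq \mathbb{1}_{C(\Omega)^{-1}\cap B_2}(\xi^{-1}x)$ pointwise, and Fatou gives $\liminf_j f(\xi_j)\geq f(\xi)$. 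Since $K$ is compact and $f$ is lower semicontinuous and everywhere positive on $K$ by step (a), $f$ attains a positive minimum, which is exactly the assertion $\eta(C(\Omega),C(K))>0$.

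I expect the main obstacle to be step (a) — verifying that the open set $C(\Omega)\cap B_2 \cap \xi C(\Omega)^{-1}\cap \xi B_2$ is genuinely nonempty for every $\xi\in K$, rather than just for $\xi$ near $0$. The key geometric input is that $C(\Omega)$ is a cone (dilation-invariant) with nonempty interior, so $0$ lies in the closure of $\mathrm{int}(C(\Omega))$ and of $\mathrm{int}(C(\Omega)^{-1})$; combined with $\xi\in C(\Omega)$ (since $K\subseteq\Omega$ forces $\xi\in C(\Omega)$, which is where homogeneity of the cone and the $B_2$ truncation must be reconciled — one needs $\xi\in B_2$, which should be arranged by the normalization $\xi\in \Sp(\G)\subseteq \overline{B_1}$), one finds points of $\xi\,C(\Omega)^{-1}$ arbitrarily close to $\xi$, hence inside the open set $C(\Omega)\cap B_2$. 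Once nonemptiness and openness are in hand, everything else is the routine Fatou/compactness argument above.
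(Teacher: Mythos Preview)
Your approach is correct but takes a genuinely different route from the paper. The paper does not split into ``pointwise positivity + lower semicontinuity + compactness''. Instead it exploits compactness directly at the outset: since $K$ is compact and contained in the open set $C(\Omega)$, there is a uniform $\delta\in(0,1)$ with $\xi B_\delta\subseteq C(\Omega)\cap B_2$ for every $\xi\in K$ (recall $\xi\in\Sp(\G)$, so $d(0,\xi)=1$). Then
\[
\mu\big(C(\Omega)\cap B_2\cap \xi C(\Omega)^{-1}\cap \xi B_2\big)\ \geq\ \mu\big(\xi B_\delta\cap \xi C(\Omega)^{-1}\big)\ =\ \mu\big(C(\Omega)^{-1}\cap B_\delta\big),
\]
which is a single positive number independent of $\xi$. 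This yields an explicit uniform lower bound in two lines, without Fatou or any semicontinuity argument. Your route works too: your step~(a) is fine once you observe (as you eventually do) that $\xi\in K\subseteq\Omega\subseteq C(\Omega)$, that $C(\Omega)\cap B_2\cap \xi B_2$ is an open neighborhood of $\xi$, and that $\xi\in\overline{\xi C(\Omega)^{-1}}$ because $0\in\overline{C(\Omega)^{-1}}$; your step~(b) via Fatou is clean. The trade-off is that the paper's argument is shorter and gives a concrete constant, while yours is a reusable template (positivity + lower semicontinuity on a compact) that would transfer to settings where no such explicit inclusion is available.
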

\begin{proof}
 Since $K $ is a compact set, there exists $\delta\in (0,1)$ such that
 \[
 0 < \delta < d(K, \mathbb G\setminus C(\Omega)).
 \]
 Hence for any $\xi \in K$ one has $\xi B_{ \delta}\subseteq C(\Omega)\cap B_2$. Therefore
 \[
 \begin{aligned}
 \mu(C(\Omega)\cap B_2\cap \xi C(\Omega)^{-1}\cap \xi B_2)&\geq \mu (C(\Omega)\cap B_2\cap \xi C(\Omega)^{-1}\cap \xi B_{ \delta})\\
 &=\mu (\xi C(\Omega)^{-1}\cap \xi B_{ \delta}) \\
 &=\mu (C(\Omega)^{-1}\cap  B_{\delta}),
 \end{aligned}
 \]
 which is a positive lower bound independent of $ \xi \in K $.
\end{proof}
\noindent By combining Remark~\ref{rmk:conecompactness} and Lemmata~\ref{l:measure:upperbound} and~\ref{positivemeasure} we get the following corollary.
\begin{corollary}\label{prop:prop2}
Let $\G$ be a Carnot group, let $(U_i,\Omega_i, K_i)_{i=1}^N$ be as in Remark~\ref{rmk:conecompactness}, and let $E\subseteq\G$ be a set of locally finite perimeter. Then, for every $p\in\mathcal F E$, there exists $r_p>0$ such that, if $\nu_E(p)\in U_i$ and $r\in(0,r_p)$, then 
\begin{equation}\label{inequality1}
\mu (B_{2r}\cap C_i\cap  p^{-1}(\G\setminus E))\leq \dfrac{\eta_i}{3} r^Q
\end{equation}
and 
\begin{equation}\label{inequality2}
\mu (B_{2r}\cap C_i^{-1}\cap p^{-1}E)\leq  \dfrac{\eta_i}{3} r^Q,
\end{equation}
where $C_i\coloneqq C(\Omega_i)$ and $ \eta_i\coloneqq\eta(C_i,C(K_i))= \inf\{ \mu (C_i\cap B_2  \cap \xi \inv{C}_i\cap \xi B_2) \,:\, \xi \in K_i\}$.
\end{corollary}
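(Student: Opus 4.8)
The plan is to deduce Corollary~\ref{prop:prop2} by directly patching together the two cited results, taking $\varepsilon$ in Lemma~\ref{l:measure:upperbound} of the order of the (finitely many) positive constants $\eta_i$ produced by Lemma~\ref{positivemeasure}, and exploiting the fact that the cones $C_i=C(\Omega_i)$ are, by construction in Remark~\ref{rmk:conecompactness}, contained in the semigroups $S_\nu$ for all $\nu\in U_i$. First I would fix the data from Remark~\ref{rmk:conecompactness}: the open cover $(U_i)_{i=1}^N$ of $\Sp(\g_1)$, the open cones $C(\Omega_i)$ with $C(\Omega_i)\subseteq\bigcap_{\nu\in U_i}S_\nu$, and the compact sets $K_i\subseteq\Omega_i$. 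For each $i$ set $\eta_i\coloneqq\eta(C_i,C(K_i))$; by Lemma~\ref{positivemeasure} each $\eta_i>0$, and since there are only finitely many indices, $\eta_*\coloneqq\min_i\eta_i>0$. Then I would apply Lemma~\ref{l:measure:upperbound} with the choice $\varepsilon\coloneqq\eta_*/(3\cdot 2^Q)$ (the factor $2^Q$ absorbing the change from balls $B_r$ to balls $B_{2r}$, and the factor $3$ matching the statement), obtaining for every $p\in\mathcal FE$ a radius $r_p>0$ such that the two inequalities of that lemma hold for all $r\in(0,r_p)$ with $S_{\nu_E(p)}$ in place of the cone.

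The second step is the purely set-theoretic reduction from $S_{\nu_E(p)}$ to $C_i$. If $\nu_E(p)\in U_i$, then $C_i=C(\Omega_i)\subseteq S_{\nu_E(p)}$, hence
\[
B_{2r}\cap C_i\cap p^{-1}(\G\setminus E)\subseteq B_{2r}\cap S_{\nu_E(p)}\cap p^{-1}(\G\setminus E),
\]
and likewise $C_i^{-1}\subseteq S_{\nu_E(p)}^{-1}=S_{-\nu_E(p)}$ gives the analogous inclusion for the second expression. Applying Lemma~\ref{l:measure:upperbound} at the radius $2r<2r_p$ — so one should in fact run Lemma~\ref{l:measure:upperbound} with threshold radius $r_p/2$, or equivalently relabel — yields
\[
\mu\bigl(B_{2r}\cap C_i\cap p^{-1}(\G\setminus E)\bigr)\le \varepsilon (2r)^Q=\frac{\eta_*}{3\cdot 2^Q}\,2^Q r^Q=\frac{\eta_*}{3}r^Q\le\frac{\eta_i}{3}r^Q,
\]
which is \eqref{inequality1}, and symmetrically \eqref{inequality2}. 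Monotonicity of the measure under inclusion and the fact that $\eta_*\le\eta_i$ are the only two inequalities used here; everything else is bookkeeping of constants.

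The one point that deserves care — and the closest thing to an obstacle — is the interplay of the two ``radius'' scales and the order of quantifiers: Lemma~\ref{l:measure:upperbound} produces $r_p$ depending only on $\varepsilon$ and $p$ (not on $i$), and here $\varepsilon$ has been fixed once and for all \emph{before} looking at $p$, using only the finitely many group-dependent constants $\eta_i$; so the resulting $r_p$ is legitimately independent of which $U_i$ happens to contain $\nu_E(p)$, and the statement ``if $\nu_E(p)\in U_i$ and $r\in(0,r_p)$'' is consistent. I would therefore phrase the proof as: choose $\varepsilon$ from the $\eta_i$'s, invoke Lemma~\ref{l:measure:upperbound} to get $r_p$ (adjusting the threshold by a factor $2$ to accommodate $B_{2r}$), then for the specific $i$ with $\nu_E(p)\in U_i$ use $C_i\subseteq S_{\nu_E(p)}$ together with monotonicity to transfer the bound. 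No further estimates are needed; this is genuinely just the combination announced in the sentence preceding the corollary.
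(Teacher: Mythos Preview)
Your proof is correct and is precisely the combination the paper announces in the sentence preceding the corollary; the paper gives no detailed argument beyond that sentence, and your write-up supplies exactly the missing bookkeeping (choosing $\varepsilon=\eta_*/(3\cdot 2^Q)$, using $C_i\subseteq S_{\nu_E(p)}$ from Remark~\ref{rmk:conecompactness}, and halving the radius threshold). There is nothing to add or correct.
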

\begin{lemma}
 \label{t:cone:property}
Let $\G$ be a Carnot group and let $E$ be a set of locally finite perimeter in $\G$. Consider a family $(U_i,\Omega_i,K_i)_{i=1}^N$ as in Remark~\ref{rmk:conecompactness} and, for every $p\in\mathcal F E$, define $r_p>0$ as in Corollary~\ref{prop:prop2}. Then, the sets
%
%
\begin{equation}\label{eq:effeielle}
F_{i,\ell} \coloneqq \{ p\in \mathcal{F}E \,:\, \nu_E(p)\in U_i, r_p > \tfrac{1}{\ell} \},
\end{equation}
defined for $i\in\{1,\dots,N\}$ and $\ell\in\N$, satisfy
\begin{equation}\label{puppa}
 F_{i,\ell}\cap pB_{1/\ell} \cap pC(K_i)=\emptyset \qquad\forall\ p\in F_{i,\ell}.
\end{equation}
\end{lemma}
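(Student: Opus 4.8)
The plan is to argue by contradiction and exploit the density estimates of Corollary~\ref{prop:prop2} together with the measure-theoretic content of Lemma~\ref{positivemeasure}. Suppose that \eqref{puppa} fails, so that there exist $p\in F_{i,\ell}$ and a point
\[
q\in F_{i,\ell}\cap pB_{1/\ell}\cap pC(K_i).
\]
Write $q = p\,\delta_r(\xi)$ with $\xi\in K_i$ and $r>0$; the condition $q\in pB_{1/\ell}$ will force $r<1/\ell$, hence $r< r_p$ and also $r< r_q$ (since $q\in F_{i,\ell}$ too). First I would rescale everything by $\delta_{1/r}$ centered at $p$: set $\widetilde E\coloneqq \delta_{1/r}(p^{-1}E)$, so that in these coordinates $p$ becomes the origin $0$, the point $q$ becomes $\xi\in K_i$, and the cone $C_i\coloneqq C(\Omega_i)$ is dilation-invariant so it is unchanged. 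The estimates \eqref{inequality1} and \eqref{inequality2} (applied at $p$, scale $r$) translate into
\[
\mu\big(B_2\cap C_i\cap (\G\setminus\widetilde E)\big)\le \tfrac{\eta_i}{3},\qquad
\mu\big(B_2\cap C_i^{-1}\cap \widetilde E\big)\le \tfrac{\eta_i}{3}.
\]

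Next I would extract the corresponding estimates centered at $q$. Since $\nu_E(q)\in U_i$ and the translated/rescaled radius at $q$ is still $< r_q$, applying Corollary~\ref{prop:prop2} at $q$ and rescaling by $\delta_{1/r}$ (so $q\mapsto \xi$) gives, after a left-translation by $\xi$,
\[
\mu\big(B_2\cap C_i^{-1}\cap \xi^{-1}\widetilde E\big)\le \tfrac{\eta_i}{3},
\]
equivalently $\mu\big(\xi B_2\cap \xi C_i^{-1}\cap \widetilde E\big)\le \tfrac{\eta_i}{3}$. Now combine: the "good" region
\[
A\coloneqq C_i\cap B_2\cap \xi C_i^{-1}\cap \xi B_2
\]
has $\mu(A)\ge \eta_i$ by the very definition of $\eta_i$ and the fact that $\xi\in K_i$ (Lemma~\ref{positivemeasure} / Corollary~\ref{prop:prop2}). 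But $A$ is covered by the three sets $B_2\cap C_i\cap(\G\setminus\widetilde E)$, $\xi B_2\cap \xi C_i^{-1}\cap\widetilde E$, and $B_2\cap C_i\cap \xi C_i^{-1}\cap\widetilde E\cap(\G\setminus\widetilde E)$ — wait: more cleanly, a point of $A$ lies either in $\widetilde E$ or not; if it is not in $\widetilde E$ it belongs to $B_2\cap C_i\cap(\G\setminus\widetilde E)$, and if it is in $\widetilde E$ it belongs to $\xi B_2\cap \xi C_i^{-1}\cap\widetilde E$. Hence
\[
\eta_i\le \mu(A)\le \mu\big(B_2\cap C_i\cap(\G\setminus\widetilde E)\big)+\mu\big(\xi B_2\cap \xi C_i^{-1}\cap\widetilde E\big)\le \tfrac{\eta_i}{3}+\tfrac{\eta_i}{3}=\tfrac{2\eta_i}{3},
\]
a contradiction since $\eta_i>0$.

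The step I expect to require the most care is the bookkeeping of the two rescalings and the verification that the radius bounds are respected: one must check that $q\in pB_{1/\ell}$ together with $\xi\in K_i\subseteq \Sp(\G)$ (so $d(0,\xi)=1$) indeed yields $r\le 1/\ell<\min\{r_p,r_q\}$, using the homogeneity $d(p,p\delta_r(\xi))=r\,d(0,\xi)=r$, and that applying Corollary~\ref{prop:prop2} at the point $q$ produces exactly the translate $\xi C_i^{-1}$ of the cone rather than something off by a group element — here the dilation-invariance of $C_i$ and $C_i^{-1}$, and the identity $S_{-\nu}=S_\nu^{-1}$ already used in Lemma~\ref{l:measure:upperbound}, are what make the two pieces fit together. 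Everything else is the elementary covering argument above together with the strict positivity $\eta_i>0$ from Lemma~\ref{positivemeasure}.
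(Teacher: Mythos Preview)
Your argument is correct and is essentially the paper's proof. The only cosmetic difference is that you rescale everything to unit scale via $\widetilde E=\delta_{1/r}(p^{-1}E)$, whereas the paper works directly at scale $r$ with the set $I\coloneqq xC_i\cap yC_i^{-1}\cap xB_{2r}\cap yB_{2r}$ and the two estimates $\mu(I\setminus E)\le\tfrac{\eta_i}{3}r^Q$, $\mu(I\cap E)\le\tfrac{\eta_i}{3}r^Q$ versus $\mu(I)\ge\eta_i r^Q$; note in particular that only one estimate at each of the two points is needed (you wrote down both estimates at $p$ but correctly use only the one for $\G\setminus\widetilde E$).
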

\begin{proof}
Denote for shortness $\widetilde C_i\coloneqq C(K_i)$. Suppose by contradiction that there exist $x\in F_{i,\ell}$ and $y \in F_{i,\ell}\cap xB_{1/\ell}\cap x\widetilde C_i$ with $y\neq x$. Let $\eta_i$ be as in Corollary~\ref{prop:prop2} and define 
\[
r\coloneqq d(x,y) \quad\text{and}\quad I\coloneqq xC_i\cap yC_i^{-1}\cap xB_{2r}\cap yB_{2r}.
\]
By construction, we have that
\[
r=d(x,y)<\tfrac 1\ell<\min\{r_x,r_y\}
\]
 and using the facts that $I\subseteq xC_i \cap xB_{2r}$ and $I\subseteq yC_i^{-1} \cap yB_{2r}$, by applying \eqref{inequality1} and \eqref{inequality2} we have
 \[
 \mu (I\setminus E)\leq\mu(xC_i\cap xB_{2r} \cap (\G\setminus E))\leq \tfrac{\eta_i}{3}r^Q
 \]
 and
 \[
 \mu(I\cap E) \leq \mu(yC_i^{-1}\cap yB_{2r}\cap E)\leq\tfrac{\eta_i}{3}r^Q.
 \]
  This contradicts the fact that  by Lemma~\ref{positivemeasure}, we have $\mu(I)\geq \eta_i r^Q$. 
\end{proof}

\begin{thm}\label{cor:main}
Let $\G$ be a Carnot group and let $E\subseteq \G$ be a set of locally finite perimeter in $\G$. Then there exist a countable family $\{C_h:h\in \mathbb N\}$ of open cones in $\G$ and a countable family $\{\Gamma_h:h\in \N\}$ of subsets of $\G$ 
such that each $\Gamma_h$
satisfies the $C_h$-cone property and
\[
\mathcal F E= \bigcup_{h\in\N}\Gamma_h.
\]	
\end{thm}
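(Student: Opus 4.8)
The plan is to assemble Theorem~\ref{cor:main} directly from the three preceding lemmata, treating it essentially as a bookkeeping statement. First I would invoke Proposition~\ref{prop:cone:continuity}, in the packaged form of Remark~\ref{rmk:conecompactness}, to fix once and for all a finite family of triples $(U_i,\Omega_i,K_i)_{i=1}^N$ with the three listed properties: the $U_i$ cover $\Sp(\g_1)$, each $C_i\coloneqq C(\Omega_i)$ is contained in $\bigcap_{\nu\in U_i}S_\nu$ and has nonempty interior, and $K_i\sus\Omega_i$ is compact with nonempty interior. Given the set $E$ of locally finite perimeter, Corollary~\ref{prop:prop2} then produces, for every $p\in\mathcal FE$, a radius $r_p>0$ controlling the two density estimates \eqref{inequality1} and \eqref{inequality2}.

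The core step is the decomposition itself. For $i\in\{1,\dots,N\}$ and $\ell\in\N$ define $F_{i,\ell}$ exactly as in \eqref{eq:effeielle}, namely $F_{i,\ell}\coloneqq\{p\in\mathcal FE:\nu_E(p)\in U_i,\ r_p>\tfrac1\ell\}$. Since $\nu_E(p)$ is a unit vector in $\g_1$ and the $U_i$ cover $\Sp(\g_1)$, every $p\in\mathcal FE$ lies in some $U_i$; and since $r_p>0$, it lies in $F_{i,\ell}$ for $\ell$ large. Hence $\mathcal FE=\bigcup_{i=1}^N\bigcup_{\ell\in\N}F_{i,\ell}$, a countable union. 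Reindex this double family by a single $h\in\N$, writing $\Gamma_h$ for the corresponding $F_{i,\ell}$. By Lemma~\ref{t:cone:property} we have $F_{i,\ell}\cap pB_{1/\ell}\cap pC(K_i)=\emptyset$ for all $p\in F_{i,\ell}$.

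The one genuine point requiring care — and the step I expect to be the main (if modest) obstacle — is converting the \emph{local} exclusion $F_{i,\ell}\cap pB_{1/\ell}\cap p\widetilde C_i=\emptyset$ from \eqref{puppa} into a bona fide cone property $\Gamma_h\cap pC_h=\emptyset$ with a genuinely dilation-invariant open cone $C_h$. The obstruction is that $\widetilde C_i=C(K_i)$ is a cone but the exclusion only holds inside the ball $B_{1/\ell}$; points of $F_{i,\ell}$ could in principle sit in $p\widetilde C_i$ at distance $\geq 1/\ell$. The fix is to shrink: since $K_i$ is compact with nonempty interior in $\Sp(\G)$, one may replace $K_i$ by a compact $K_i'\sus\operatorname{int}(K_i)$ with nonempty interior, and correspondingly use, as the open cone for index $h=(i,\ell)$, the set
\[
C_h\coloneqq\operatorname{int}\bigl(C(K_i)\bigr)\cap B_{1/\ell}\cdot\{\text{nothing}\}
\]
— more precisely, one takes $C_h$ to be an open dilation-invariant subset of $\operatorname{int}(C(K_i))$ small enough that any $q\in pC_h$ with $q\ne p$ automatically has $d(p,q)$ small; but this is impossible for a nonempty cone, so instead the right move is: enlarge $\ell$ as needed and observe that, because $\widetilde C_i$ is a cone, $\Gamma_h\cap p\,\operatorname{int}(\widetilde C_i)=\emptyset$ follows from \eqref{puppa} after first intersecting $\mathcal FE$ with a ball of radius $1/(2\ell)$ around each of its points — equivalently, one further subdivides each $F_{i,\ell}$ into countably many pieces of diameter $<1/\ell$, on each of which the full cone exclusion holds. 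Concretely: cover $\G$ by countably many sets of diameter $<1/\ell$, intersect $F_{i,\ell}$ with each, and on a piece $\Gamma$ of diameter $<1/\ell$ we get $\Gamma\cap p\,\operatorname{int}(C(K_i))=\emptyset$ for all $p\in\Gamma$ because any $q\in\Gamma\cap p\,\operatorname{int}(C(K_i))$ satisfies $d(p,q)<1/\ell$, hence $q\in pB_{1/\ell}\cap pC(K_i)$, contradicting \eqref{puppa}. Thus, after this extra countable refinement, each resulting $\Gamma_h$ satisfies the $C_h$-cone property with $C_h\coloneqq\operatorname{int}(C(K_i))$, a nonempty open cone; re-indexing the countable family $(\Gamma_h,C_h)$ by $h\in\N$ and noting $\mathcal FE=\bigcup_h\Gamma_h$ completes the proof.
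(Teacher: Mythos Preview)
Your argument is correct and follows essentially the same route as the paper: after invoking Remark~\ref{rmk:conecompactness} and Corollary~\ref{prop:prop2} to set up the $F_{i,\ell}$, you further subdivide each $F_{i,\ell}$ into countably many pieces of diameter smaller than $1/\ell$, so that the local exclusion \eqref{puppa} upgrades to the full cone property with the open cone $\operatorname{int}(C(K_i))=C(\operatorname{int}(K_i))$. The third paragraph should be rewritten to remove the abandoned attempts and go straight to the diameter-$<1/\ell$ covering, but the final argument is sound (and in fact your choice of diameter $<1/\ell$ is marginally cleaner than the paper's balls of radius $1/\ell$, which strictly speaking would need radius $1/(2\ell)$).
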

\begin{proof}
	It is enough to show that, for any $i,\ell\in\N$, the set $F_{i,\ell}$ defined in \eqref{eq:effeielle} can be covered by a countable union of sets satisfying a cone property. This is simply done by covering $F_{i,\ell}$ by a countable family of balls $\{B(p_j,1/\ell):p_j\in F_{i,\ell}, j\in\N\}$ and decomposing $F_{i,\ell}=\bigcup_{j\in\N}(F_{i,\ell}\cap B(p_j,1/\ell))$. We now set 
	\begin{equation}\label{eq:gammailj}
	\Gamma_{i,\ell}^j\coloneqq F_{i,\ell}\cap B(p_j,1/\ell)
	\end{equation}
	Then by \eqref{puppa}, we have that, for every $i,j\in \mathbb N$, the set $\Gamma_{i,\ell}^j$ has the $C(\mathrm{int}(K_i))$-cone property, where $K_i$ are the compact subsets of the sphere introduced in Remark \ref{rmk:conecompactness}. Up to relabeling the family $\{\Gamma_{i,\ell}^j: i=1,\dots,N, \ell,j\in \mathbb N\}$ and renaming $C_i\coloneqq C(\mathrm{int}(K_i))$, we have then a countable family $\{C_h: h\in \mathbb N\}$ of open cones  and a countable family $\{\Gamma_h:h\in \mathbb N\}$ of sets such that each $\Gamma_h$ satisfies the $C_h$-cone property and $\mathcal F{E}=\bigcup_{h\in \mathbb N}\Gamma_h$.
\end{proof}

\begin{rmk}
	Notice that the family of cones $\{C_h:h \in \mathbb N\}$ appearing in Theorem \ref{cor:main} is indeed finite. This comes from the construction $C_i=C(\mathrm{int}(K_i))$, and the fact that, by Remark \ref{rmk:conecompactness}, the family $\{K_i: i=1,\dots, N\}$ is finite.
	\end{rmk}
\section{Intrinsic Lipschitz graphs}\label{sec:Lip}
In this section we follow \cite{FSSC06} to recall the notion of intrinsic Lipschitz graph in Carnot groups. The construction of an intrinsic Lipschitz graph requires the space to have a decomposition into complementary subgroups (see Definition~\ref{def:introlipgraph}). As observed in Remark~\ref{rmk:hyva} below, in certain cases Theorem~\ref{cor:main} can be strengthened to deduce that the reduced boundary of sets of finite perimeter is intrinsically Lipschitz-rectifiable, i.e., it can be covered by a countable union of intrinsic Lipschitz graphs. \\
 Since we only deal with the notion of codimension-one rectifiable sets, Definition~\ref{def:introlipgraph} will be used only in case $\dim \mathbb L=1$. In this situation, according to \cite[Proposition 3.4]{V12} (see also \cite{FS16}), one can see that, up to a modification of the parameter $\beta>0$, sets $\Sigma$ satisfying point (i) of Definition~\ref{def:introlipgraph} can be extended to sets $\Sigma\subseteq \widetilde \Sigma$ that satisfy (ii).  
\begin{rmk}\label{rmk:hyva}
	Assume $\Gamma\subseteq \G$ is a set with the $C$-cone property such that $C$ is open and there exists $X\in \g_1\setminus\{0\}$ with the property that $\exp(X)\in C$. Then $\Gamma$ is an intrinsic Lipschitz graph. Indeed, according to Definition~\ref{def:introlipgraph}, we may choose $ \mathbb{L} \coloneqq \{\exp(tX): t \in \R \} $ and $ \mathbb{W} \coloneqq \exp(X^\perp \oplus \g_2 \oplus \cdots \oplus \g_s)  $. Recall that, by Remark~\ref{rmk:C_inv_cone_property}, the set $ \Gamma $ satisfies also $ C^{-1} $-cone property. Since both $C$ and $ C^{-1} $ are open, there exists $\varepsilon>0$ such that $B(\exp(X),\varepsilon)\subseteq C$ and $B(\exp(-X),\varepsilon)\subseteq C^{-1}$. By scaling we may assume that $ \|X\| = 1 $, and so with the choice $\beta=\varepsilon$ condition \eqref{eq:conoLip} is satisfied.
	
In particular, Lemma~\ref{t:cone:property} shows that, if for some $i=1,\dots, N$, the set $K_i$ defined in Remark~\ref{rmk:conecompactness} is such that $ \Int(K_i)\cap\exp(\g_1)\neq \emptyset$, then, for every $j,\ell\in \mathbb N$, the set $\Gamma_{i,\ell}^j$ defined in \eqref{eq:gammailj} is an intrinsic Lipschitz graph. 
\end{rmk}
\begin{defn}\label{def:LR}
 Let $ \G $ be a Carnot group of homogeneous dimension $Q$ and let $ E \sus \G $. We say that $ E $ is \emph{ intrinsically Lipschitz rectifiable }if there exists a countable family $ \{\Sigma_h : h\in \N\} $ of intrinsic Lipschitz graphs such that
 \[
 \mathscr H^{Q-1}\left (E \setminus \bigcup_{h \in \N}\Sigma_h\right ) = 0.
 \]
\end{defn}
\noindent An immediate consequence of Remark~\ref{rmk:hyva} and Theorem~\ref{cor:main} is given by Corollary \ref{p:unicorn} below.
\begin{corollary}\label{p:unicorn}
Let $\G$ be a Carnot group and assume that for all $ \nu \in \Sp(\g_1) $ there exists $ X \in \Sp(\g_1) $ such that $ \exp(X) \in \mathrm{int}( S_\nu) $. Then the reduced boundary of every set of locally finite perimeter in $\G$ is intrinsically Lipschitz rectifiable.
\end{corollary}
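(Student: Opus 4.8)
The plan is to deduce Corollary~\ref{p:unicorn} directly from Theorem~\ref{cor:main} together with the last sentence of Remark~\ref{rmk:conecompactness} and Remark~\ref{rmk:hyva}. First I would invoke the hypothesis: for every $\nu\in\Sp(\g_1)$ there is $X\in\Sp(\g_1)$ with $\exp(X)\in\mathrm{int}(S_\nu)$. By the final part of Remark~\ref{rmk:conecompactness}, this is exactly the extra condition that lets us pick the finite family $(U_i,\Omega_i,K_i)_{i=1}^N$ so that each compact set $K_i$ additionally satisfies $\exp(X_i)\in\mathrm{int}(C(K_i))$ for some $X_i\in\g_1$; since $C(K_i)$ is a cone, after a dilation we may take $X_i$ itself to satisfy $\exp(X_i)\in\mathrm{int}(C(\Int(K_i)))$, i.e.\ $\Int(K_i)\cap\exp(\g_1)\neq\emptyset$ for every $i$.

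Next I would run the construction in the proof of Theorem~\ref{cor:main} with this particular choice of $(U_i,\Omega_i,K_i)$. That proof produces, for each $i\in\{1,\dots,N\}$ and each $\ell,j\in\N$, the sets $\Gamma_{i,\ell}^j = F_{i,\ell}\cap B(p_j,1/\ell)$, each having the $C(\Int(K_i))$-cone property, with $\mathcal FE=\bigcup_{i,\ell,j}\Gamma_{i,\ell}^j$. Now for each $i$ the cone $C_i:=C(\Int(K_i))$ is open and contains $\exp(X_i)$ with $X_i\in\g_1\setminus\{0\}$, so Remark~\ref{rmk:hyva} applies and tells us that every $\Gamma_{i,\ell}^j$ is an intrinsic Lipschitz graph (for the complementary subgroups $\mathbb L=\{\exp(tX_i):t\in\R\}$ and $\mathbb W=\exp(X_i^\perp\oplus\g_2\oplus\cdots\oplus\g_s)$). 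Relabelling the countable family $\{\Gamma_{i,\ell}^j\}$ as $\{\Sigma_h:h\in\N\}$, we obtain a countable family of intrinsic Lipschitz graphs with $\mathcal FE=\bigcup_h\Sigma_h$, which in particular gives $\mathscr H^{Q-1}(\mathcal FE\setminus\bigcup_h\Sigma_h)=0$, so $\mathcal FE$ is intrinsically Lipschitz rectifiable in the sense of Definition~\ref{def:LR}.

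I do not expect any serious obstacle here: the corollary is essentially a bookkeeping assembly of results already proved. The only point that needs a line of care is the passage from "$\exp(X)\in\mathrm{int}(S_\nu)$ for every $\nu$" to "$\Int(K_i)\cap\exp(\g_1)\neq\emptyset$ for every $i$" — one must cite the relevant clause of Remark~\ref{rmk:conecompactness} and note that $C(K_i)$ being dilation-invariant lets one move the witnessing horizontal point into $\exp(\g_1)$. A second minor point is that Remark~\ref{rmk:hyva} is stated for an abstract $\Gamma$ with a $C$-cone property where $C$ is open and contains $\exp(X)$ for some $X\in\g_1\setminus\{0\}$; one must check $C(\Int(K_i))$ is open (clear, since $\Int(K_i)$ is open in $\Sp(\G)$ and dilations are homeomorphisms) and nonempty (guaranteed by the choice of $K_i$ with nonempty interior). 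Everything else is immediate from Theorem~\ref{cor:main}.
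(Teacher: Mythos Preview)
Your proposal is correct and follows essentially the same route as the paper, which simply states that the corollary is ``an immediate consequence of Remark~\ref{rmk:hyva} and Theorem~\ref{cor:main}.'' You have spelled out the bookkeeping in more detail (in particular the use of the last clause of Remark~\ref{rmk:conecompactness} to arrange $\Int(K_i)\cap\exp(\g_1)\neq\emptyset$), but the argument is the same.
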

To describe some conditions on the group that guarantee the validity of the assumptions of Corollary~\ref{p:unicorn}, we introduce the definition of end-point map. 
\begin{defn}\label{def:endpoint}
 Let $\G$ be a Carnot group. The \emph{end-point map} $\mathrm{End}\colon L^\infty([0,1];\g_1)\to \G$ is defined by letting
 \[
 \mathrm{End}(h)=\gamma(1),
 \]
 where $\gamma:[0,1]\to\G$ is the horizontal curve that is the unique solution of
 \[
 \begin{cases}
 \dot\gamma(t)=h(t) \\
 \gamma(0)=0.
 \end{cases}
 \]
 With abuse of notation we also write $\mathrm{End}(\gamma)$ meaning $\mathrm{End}(h)$ for the defining control $h$. 
\end{defn}
In what follows, we say that a map $ F \colon M \to N $ between topological spaces $ M $ and $ N $ is \emph{locally open at} $ p \in M $ if, for every neighborhood $ U $ of $ p $, the set $ F(U) $ is a neighborhood of $ F(p) $. 
If $\G$ is a Carnot group and $X\in \g_1$ is a horizontal direction, we also say that the end-point map $\mathrm{End}\colon L^\infty([0,1];\g_1)\to \G$ is locally open at $X$, if it is locally open at $h(t)\equiv X$.\\
Before proving Lemma~\ref{l:elk}, we point out some topological properties of the semigroups $S_\nu$.
\begin{lemma}\label{prop:topology}
	Let $\G$ be a Carnot group and let $\nu\in\g_1\setminus\{0\}$. Then $\mathrm{int}(S_\nu)=\mathrm{int}(\overline{S}_\nu)$. 
\end{lemma}
\begin{proof}
	Since $\mathrm{int}(S_\nu)$ has the (inner) $\mathrm{int}(S_\nu)$-cone property, then by \cite[Lemma 2.36]{BLD19} the set $ \mathrm{int}(S_\nu) $ is regularly open, i.e., we have that
	\begin{equation}\label{eq:regularlyopen}
	\mathrm{int}(S_\nu)=\mathrm{int}(\overline{\mathrm{int}(S_\nu)}).
	\end{equation}
	On the other hand, by \cite[Theorem 8.1]{AS}, we also have that
	\begin{equation}\label{eq:regularlyclosed}
	\overline{S}_\nu=\overline{\mathrm{int}(S_\nu)}.
	\end{equation}
	The result follows combining \eqref{eq:regularlyopen} and \eqref{eq:regularlyclosed}. 
\end{proof}

\begin{lemma}\label{l:elk}
 Let $\G$ be a Carnot group, let $X\in \g_1\setminus\{0\}$ and assume that the end-point map $ \mathrm{End} \colon L^\infty( [0,1];\g_1)\to \G $ is locally open at $ X $.  Then, for every $ \nu \in \Sp(\g_1) $ satisfying $ \langle \nu,X \rangle > 0$, we have $ \exp(X) \in \mathrm{int} (S_\nu)$.
\end{lemma}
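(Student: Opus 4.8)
The goal is to show that, assuming $\mathrm{End}$ is locally open at the constant control $h\equiv X$, any unit $\nu$ with $\langle\nu,X\rangle>0$ satisfies $\exp(X)\in\mathrm{int}(S_\nu)$. The plan is to produce an open neighborhood of $\exp(X)$ inside $\overline{S}_\nu$, and then invoke Proposition~\ref{prop:topology} to upgrade this to membership in $\mathrm{int}(S_\nu)$. First I would recall from Lemma~\ref{l:curves:Snu} that any horizontal curve $\gamma$ with $\gamma(0)=0$ and $\langle\dot\gamma(t),\nu\rangle\geq 0$ for a.e.\ $t$ has its endpoint in $\overline{S}_\nu$. Thus it suffices to find, for every point $q$ near $\exp(X)$, a control $h_q\in L^\infty([0,1];\g_1)$ with $\mathrm{End}(h_q)=q$ and $\langle h_q(t),\nu\rangle\geq 0$ a.e.

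The key observation is that the condition $\langle\nu,X\rangle>0$ is a \emph{strict} inequality, so nearby controls inherit it. Concretely, fix $\varepsilon>0$ small enough that the $L^\infty$-ball $U_\varepsilon=\{h:\|h-X\|_\infty<\varepsilon\}$ consists only of controls satisfying $\langle h(t),\nu\rangle\geq \langle X,\nu\rangle-\varepsilon>0$ a.e.; this is possible since $|\langle h(t)-X,\nu\rangle|\le\|h(t)-X\|<\varepsilon$. By the hypothesis of local openness at $X$, the image $\mathrm{End}(U_\varepsilon)$ is a neighborhood of $\mathrm{End}(X)=\exp(X)$. Therefore every $q$ in this neighborhood is the endpoint of some control in $U_\varepsilon$, which by the previous paragraph satisfies the sign condition, so $q\in\overline{S}_\nu$ by Lemma~\ref{l:curves:Snu}. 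This shows $\exp(X)\in\mathrm{int}(\overline{S}_\nu)$.

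Finally, by Proposition~\ref{prop:topology}, $\mathrm{int}(\overline{S}_\nu)=\mathrm{int}(S_\nu)$, so $\exp(X)\in\mathrm{int}(S_\nu)$, completing the proof. I do not expect a serious obstacle here: the only point requiring mild care is checking that the constant control $h\equiv X$ produces endpoint exactly $\exp(X)$ (immediate from the definition of $\mathrm{End}$ and the fact that the integral curve of the left-invariant field $X$ through $0$ at time $1$ is $\exp(X)$), and making sure the $L^\infty$-neighborhood is chosen small enough to force the strict sign condition to persist. The essential input is simply that strict positivity of $\langle\nu,X\rangle$ is an open condition, married to the local openness hypothesis on $\mathrm{End}$ and the attainability description of $\overline{S}_\nu$ from Lemma~\ref{l:curves:Snu}.
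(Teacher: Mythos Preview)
Your proposal is correct and follows essentially the same argument as the paper: choose an $L^\infty$-ball around the constant control $X$ small enough that all controls in it satisfy $\langle h(t),\nu\rangle>0$ a.e., apply Lemma~\ref{l:curves:Snu} to land in $\overline{S}_\nu$, use local openness of $\mathrm{End}$ to get a neighborhood of $\exp(X)$, and invoke Proposition~\ref{prop:topology}. The paper even makes the same specific choice $\varepsilon=\langle\nu,X\rangle$ for the radius.
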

\begin{proof}
Let $\varepsilon\coloneqq \langle \nu, X\rangle >0$ and
 \[
 B_\epsilon(X) \coloneqq \{v \in L^\infty([0,1];\g_1) : \| X-v\|_\infty < \varepsilon \}.
 \]
 Since $ \mathrm{End} $ is open at $ X $, by Lemma~\ref{prop:topology} it suffices to show that $ \mathrm{End}(B_\varepsilon(X)) \sus \overline{S}_\nu $. On the other hand,  by Lemma~\ref{l:curves:Snu}, if  $ v \in L^\infty([0,1];\mathfrak g_1)$ satisfies $ \langle v(t),\nu \rangle > 0 $ for almost every $ t\in [0,1] $, then $\mathrm{End}(v)\in  \overline{S}_\nu$. The proof is then achieved by noticing that, for every $ v \in B_\varepsilon(X)  $, we have
 \[
 \langle v(t),\nu \rangle = \langle \nu, X \rangle -\langle \nu, X-v(t) \rangle \geq \varepsilon- \| \nu\| \| X-v\|_{\infty} > 0
 \]
 for almost every $ t \in [0,1]$.
\end{proof}
\begin{corollary}\label{c:horse}
 Let $\G$ be a Carnot group and assume there exists a basis  $ \{X_i : i =  1,\dots, m \} $ of $ \g_1 $ such that the end-point map $ \mathrm{End} \colon L^\infty( [0,1];\g_1)\to \G $ is locally open at $ X_i $, for every $i=1,\dots,m$. Then the reduced boundary of every set of locally finite perimeter in $\G$ is intrinsically Lipschitz rectifiable.
\end{corollary}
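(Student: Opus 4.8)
The plan is to reduce the statement to Corollary~\ref{p:unicorn}: I will verify that, under the present hypothesis, for every $\nu\in\Sp(\g_1)$ there exists $X\in\Sp(\g_1)$ with $\exp(X)\in\mathrm{int}(S_\nu)$. By Lemma~\ref{l:elk}, such an $X$ is produced as soon as one finds a horizontal direction at which the end-point map is locally open and which pairs strictly positively with $\nu$. Since $\{X_i\}$ is a basis, for every $\nu\neq 0$ at least one of the numbers $\langle\nu,X_i\rangle$ is nonzero; the point requiring care — and the only real obstacle — is that this scalar product could be negative, in which case $X_i$ itself is of no use. I would repair this using the time-reversal symmetry of the end-point map, which shows that $\mathrm{End}$ is locally open at $-X_i$ as well.

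Concretely, the first step is to record that if $\gamma\colon[0,1]\to\G$ is the horizontal curve with $\gamma(0)=0$ and control $h$, then $t\mapsto\gamma(1)^{-1}\gamma(1-t)$ is the horizontal curve starting at $0$ with control $\check h(t)\coloneqq-h(1-t)$ (left translations preserve controls with respect to a left-invariant frame, while reversing time negates them), so that $\mathrm{End}(\check h)=\mathrm{End}(h)^{-1}$. The map $h\mapsto\check h$ is a linear homeomorphism of $L^\infty([0,1];\g_1)$, it is an involution, and it carries the constant control $X_i$ to the constant control $-X_i$; since inversion is a homeomorphism of $\G$ and $\mathrm{End}(-X_i)=\mathrm{End}(X_i)^{-1}=\exp(-X_i)$, local openness of $\mathrm{End}$ at $X_i$ is equivalent to local openness at $-X_i$. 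Hence $\mathrm{End}$ is locally open at every element of $\{\pm X_i : i=1,\dots,m\}$.

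The second, routine, step is that each $\mathrm{int}(S_\nu)$ is a cone: indeed $S_\nu$ is $\delta_r$-invariant and every $\delta_r$ is a homeomorphism, so $\delta_r(\mathrm{int}(S_\nu))=\mathrm{int}(\delta_r(S_\nu))=\mathrm{int}(S_\nu)$ for all $r>0$; consequently, whenever $\exp(Y)\in\mathrm{int}(S_\nu)$ for some $Y\in\g_1\setminus\{0\}$, also $\exp(Y/\|Y\|)=\delta_{1/\|Y\|}(\exp(Y))\in\mathrm{int}(S_\nu)$ with $Y/\|Y\|\in\Sp(\g_1)$. Finally, given $\nu\in\Sp(\g_1)$, I would pick $i$ with $\langle\nu,X_i\rangle\neq 0$, set $Y\coloneqq X_i$ or $Y\coloneqq-X_i$ according to the sign so that $\langle\nu,Y\rangle>0$ (and $\mathrm{End}$ is locally open at $Y$ by the first step), apply Lemma~\ref{l:elk} to obtain $\exp(Y)\in\mathrm{int}(S_\nu)$, and normalize via the second step to get a unit vector in $\mathrm{int}(S_\nu)$. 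This checks the hypothesis of Corollary~\ref{p:unicorn}, from which the conclusion follows.
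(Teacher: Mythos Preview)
Your proof is correct and follows the same skeleton as the paper's: reduce to Corollary~\ref{p:unicorn} via Lemma~\ref{l:elk}, handling the sign issue that $\langle\nu,X_i\rangle$ might be negative. The only difference is in that last step: you prove, via the time-reversal involution $h\mapsto\check h$, that $\mathrm{End}$ is locally open at $-X_i$ and then apply Lemma~\ref{l:elk} directly with $Y=-X_i$; the paper instead applies Lemma~\ref{l:elk} with $X_i$ and $-\nu$ and then uses the symmetry $S_{-\nu}=S_\nu^{-1}$ (so $\exp(X)\in\mathrm{int}(S_{-\nu})$ implies $\exp(-X)\in\mathrm{int}(S_\nu)$). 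Both devices are one-line observations, so the arguments are essentially equivalent; your version has the small bonus of recording explicitly that local openness of $\mathrm{End}$ is stable under sign change, and of taking care of the normalization to $\Sp(\g_1)$, which the paper leaves implicit.
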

\begin{proof}
 It is enough to combine Corollary~\ref{p:unicorn}, Lemma~\ref{l:elk} and the following fact. If $X\in \g_1$ and $ \exp(X) \in \mathrm{int}(S_\nu) $, then $ \exp(-X)\in \mathrm{int}(S_{-\nu})$.
\end{proof}
\begin{rmk}
	Every Carnot group possessing a spanning set of pliable or strongly pliable vectors in the sense of \cite{JS17} and \cite{SS18}, respectively, has the property that the reduced boundary of any set of locally finite perimeter is intrinsically Lipschitz rectifiable.
\end{rmk}
\noindent
We next give a sufficient condition that has an equivalent algebraic formulation, and that can be more easily verified. This will be used to deduce that, for example, Corollary~\ref{c:horse} applies to \emph{filiform groups} (see Section \ref{Sec:Filiforms}).
\begin{defn}\label{def:abnormal}
  Let $\G$ be a Carnot group and let $\gamma:[0,1]\to\G$ be a horizontal curve. We say that $\gamma$ is \emph{non-abnormal} if $\der\mathrm{End}(\gamma)$ has full rank. We also say that a horizontal direction $X\in \g_1\setminus\{0\}$ is non-abnormal, if $t\mapsto\exp(tX)$ is non-abnormal.
\end{defn}
As pointed out in \cite{ABB} and in \cite{Montgomery}, the fact that a curve is abnormal does not depend on its parametrization and one can develop a theory considering the end-point map defined on any $L^p$ space, $1\leq p\leq \infty$. Moreover, the Volterra expansion (see e.g.\ \cite[Formula (6.9)]{ABB}), allows to compute the differential of the End-point map with respect to any variation in $L^p$. In particular, if the differential of $\mathrm{End}\colon L^2([0,T];\R^m)\rightarrow \G$ has full-rank at $X\in \g_1$, then also the differential of $\mathrm{End}\colon L^\infty([0,T];\R^m)\rightarrow \G$ has full rank at $X$. This observation allows us to consider the formula for the differential of the end-point map in $L^2$ developed by \cite{LDMOPV}.
\begin{prop}\label{prop:nonabnormal}
 Let $\G$ be a Carnot group and let $ X \in \g_1\setminus\{0\}$. Then the curve $ t\mapsto \exp(t X) $ is non-abnormal if and only if
 \begin{equation}\label{eq:adjoint}
 \mathrm{span}\{\mathrm{ad}^k_X (\g_1) : k =0,\ldots,s-1 \} = \g.
 \end{equation}
\end{prop}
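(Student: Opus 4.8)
The plan is to reduce the statement to a short linear-algebra computation, using the explicit formula for the differential of the end-point map at a one-parameter subgroup developed in \cite{LDMOPV} (see also the Volterra expansion \cite[Formula (6.9)]{ABB}). Write $\gamma(t)=\exp(tX)$, so that $\gamma$ is the horizontal curve with constant control $u\equiv X$, and identify $T_{\gamma(1)}\G$ with $\g$ by left translation. In these terms the differential of the end-point map at $\gamma$ has the form
\[
\der\mathrm{End}(\gamma)[v]=\int_0^1 e^{(1-t)\,\mathrm{ad}_X}\,v(t)\,\der t ,\qquad v\in L^2([0,1];\g_1),
\]
where the precise sign of the exponent is convention-dependent and irrelevant below. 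Since $\G$ has step $s$, one has $\mathrm{ad}_X^k(\g_1)\subseteq\g_{1+k}$, hence $\mathrm{ad}_X^k$ vanishes on $\g_1$ for $k\geq s$, and so $e^{(1-t)\,\mathrm{ad}_X}$ acts on $\g_1$ as the polynomial $\sum_{k=0}^{s-1}\frac{(1-t)^k}{k!}\mathrm{ad}_X^k$. Substituting and interchanging sum and integral I would rewrite the differential as
\[
\der\mathrm{End}(\gamma)[v]=\sum_{k=0}^{s-1}\frac1{k!}\,\mathrm{ad}_X^k\!\left(\int_0^1(1-t)^k\,v(t)\,\der t\right).
\]

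Set $V\coloneqq\mathrm{span}\{\mathrm{ad}_X^k(\g_1):k=0,\dots,s-1\}$, the left-hand side of \eqref{eq:adjoint}. The displayed identity makes the inclusion $\mathrm{Im}\,\der\mathrm{End}(\gamma)\subseteq V$ immediate; the point is the reverse inclusion. For this I would observe that the functions $1,(1-t),\dots,(1-t)^{s-1}$ are linearly independent on $[0,1]$, so their Gram matrix $\bigl(\int_0^1(1-t)^{j+k}\,\der t\bigr)_{j,k=0}^{s-1}$ (a Hilbert matrix) is invertible; consequently there exist scalar functions $\phi_0,\dots,\phi_{s-1}$ — which may be chosen to be polynomials, in particular elements of $L^\infty([0,1])$ — satisfying $\int_0^1(1-t)^k\phi_j(t)\,\der t=\delta_{jk}$. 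Plugging the control $v=\sum_{j=0}^{s-1}\phi_j\,Y_j$ into the formula above gives $\der\mathrm{End}(\gamma)[v]=\sum_{k=0}^{s-1}\frac1{k!}\mathrm{ad}_X^k(Y_k)$, and letting $Y_0,\dots,Y_{s-1}$ range over $\g_1$ shows $V\subseteq\mathrm{Im}\,\der\mathrm{End}(\gamma)$. Because the $\phi_j$ lie in $L^\infty$, the same description of the image holds for the end-point map on $L^\infty([0,1];\g_1)$ used in Definition~\ref{def:abnormal} (this is also the content of the observation recorded just before the statement). Hence $\der\mathrm{End}(\gamma)$ has full rank exactly when $V=\g$, which is \eqref{eq:adjoint}.

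The main obstacle is really only the input: producing and justifying the formula for $\der\mathrm{End}(\gamma)$ together with the left-translation identification $T_{\gamma(1)}\G\simeq\g$, which is what is imported from \cite{LDMOPV}. Everything afterwards is elementary — the nilpotency of $\mathrm{ad}_X$ truncates the exponential to a polynomial of degree $<s$, and the invertibility of the Hilbert matrix lets one realize every element of $V$ as a value of the differential — so I do not expect further difficulties.
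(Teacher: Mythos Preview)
Your proof is correct and follows essentially the same route as the paper: both invoke \cite{LDMOPV} for the description of $\mathrm{Im}\,\der\mathrm{End}(\gamma)$ in terms of $\mathrm{Ad}_{\gamma(t)}\g_1=e^{t\,\mathrm{ad}_X}\g_1$ and then identify this with $\mathrm{span}\{\mathrm{ad}_X^k\g_1\}$. The paper records the equality $\mathrm{span}\{\mathrm{Ad}_{\gamma(t)}\g_1:t\in(0,1)\}=\mathrm{span}\{\mathrm{ad}_X^k\g_1:k\in\N\}$ as a single line, whereas you unpack it via the Hilbert-matrix/moment argument; this extra detail is correct and makes that step self-contained.
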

\begin{proof}
 Denote by $\gamma(t)\coloneqq \exp(tX)$. It is enough to notice that, by \cite[Proposition 2.3]{LDMOPV}
 \[
 \mathrm{Im}(\der\mathrm{End}(\gamma))= \der R_{\gamma(1)}\mathrm{span}\{\mathrm{Ad}_{\gamma(t)}\g_1: t\in (0,1)\}=\der R_{\gamma(1)}\mathrm{span}\{\mathrm{ad}^k_X \g_1: k\in \N\}. \qedhere
 \]
\end{proof}
\begin{rmk}
 \label{rmk:U:notabnormal}
 Condition~\eqref{eq:adjoint} is clearly open in $X$. In particular, if $\G $ contains a non-abnormal curve, then there exists an open set $ U \sus \Sp(\g_1) $ such that $ U = -U $ and $ t\mapsto \exp(t X') $ is non-abnormal for all $ X' \in U $.
\end{rmk}
\begin{corollary}\label{cor:LR}
 Let  $ \G $  be a Carnot group and assume there exists a non-abnormal $ X\in \g_1 \setminus\{0\}$. Then the reduced boundary of any set of locally finite perimeter in $\G$ is intrinsically Lipschitz rectifiable.
\end{corollary}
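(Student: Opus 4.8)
The plan is to reduce Corollary~\ref{cor:LR} to Corollary~\ref{c:horse} by producing a full basis of non-abnormal horizontal directions and checking that the end-point map is locally open at each of them. The point is that the single non-abnormal direction in the hypothesis propagates to an open set of such directions, inside which a basis can be found.

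First I would invoke Remark~\ref{rmk:U:notabnormal}: by Proposition~\ref{prop:nonabnormal} the given $X\in\g_1\setminus\{0\}$ satisfies the open condition~\eqref{eq:adjoint}, so there is a nonempty open set $U\sus\Sp(\g_1)$ every element of which is non-abnormal. A nonempty open subset of the unit sphere of $\g_1$ cannot lie in any proper linear subspace (its intersection with such a subspace is a lower-dimensional sphere, which has empty interior in $\Sp(\g_1)$), hence $U$ spans $\g_1$, and from a spanning set one extracts a basis $\{X_i:i=1,\dots,m\}$ of $\g_1$ with each $X_i\in U$ non-abnormal.

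Next I would argue that non-abnormality implies local openness of the end-point map. If $X'$ is non-abnormal then, by Definition~\ref{def:abnormal}, $\der\mathrm{End}$ at the control $t\mapsto X'$ is surjective onto the $n$-dimensional group $\G$. Picking controls $v_1,\dots,v_n\in L^\infty([0,1];\g_1)$ whose images under $\der\mathrm{End}$ form a basis, the restriction of $\mathrm{End}$ to the finite-dimensional affine subspace $X'+\mathrm{span}\{v_1,\dots,v_n\}$ has invertible differential at $X'$, so by the inverse function theorem it maps every neighborhood of $X'$ (within that subspace) onto a neighborhood of $\exp(X')$; a fortiori $\mathrm{End}\colon L^\infty([0,1];\g_1)\to\G$ is locally open at $X'$. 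Applying this to each $X_i$, the hypothesis of Corollary~\ref{c:horse} is met, and we conclude that the reduced boundary of every set of locally finite perimeter in $\G$ — in particular of every set of finite perimeter — is intrinsically Lipschitz rectifiable.

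The only step that requires genuine care is this passage "full-rank differential $\Rightarrow$ locally open" for a map issuing from the infinite-dimensional Banach space $L^\infty([0,1];\g_1)$; it is handled exactly as above by restricting to a finite-dimensional subspace of controls. (Alternatively, one could bypass the basis and argue via Corollary~\ref{p:unicorn}: given $\nu\in\Sp(\g_1)$, since $U$ is open and $\nu^\perp\cap\Sp(\g_1)$ has empty interior, choose $X'\in U$ with $\langle\nu,X'\rangle\neq0$ and, replacing $X'$ by $-X'\in U$ if needed, with $\langle\nu,X'\rangle>0$; then Lemma~\ref{l:elk} and the local openness of $\mathrm{End}$ at $X'$ give $\exp(X')\in\mathrm{int}(S_\nu)$, so Corollary~\ref{p:unicorn} applies.) Everything else is a direct citation of results already established in the previous sections.
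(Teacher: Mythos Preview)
Your proof is correct and follows the same route as the paper: the paper's two-line argument also invokes that full rank of $\der\mathrm{End}$ at $X$ gives local openness, and then cites Remark~\ref{rmk:U:notabnormal} together with Corollary~\ref{c:horse}. You simply spell out the two steps the paper leaves implicit—extracting a spanning set (hence a basis) from the open set $U\sus\Sp(\g_1)$ of non-abnormal directions, and the finite-dimensional reduction for local openness—so this is the same proof with more detail, and your alternative via Corollary~\ref{p:unicorn} and Lemma~\ref{l:elk} is a valid variant.
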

\begin{proof}
 Since $\gamma(t)\coloneqq\exp(tX)$ is non-abnormal, then $\der\mathrm{End}(\gamma)$ has full rank and, in particular, $\mathrm{End}$ is locally open at $X$. The proof then follows by combining Corollary~\ref{c:horse} and Remark~\ref{rmk:U:notabnormal}.
\end{proof}
%
\begin{prop}\label{prop:products}
 Let $ \G_1 $ and $ \G_2 $ be two Carnot groups possessing non-zero non-abnormal horizontal directions. Then the Carnot group $ \G_1 \times \G_2 $ possesses a non-zero non-abnormal horizontal direction.
\end{prop}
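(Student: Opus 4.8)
The plan is to reduce the statement to the algebraic criterion provided by Proposition~\ref{prop:nonabnormal}: a horizontal direction $X$ in a Carnot group is non-abnormal if and only if $\mathrm{span}\{\mathrm{ad}_X^k(\g_1) : k = 0, \dots, s-1\} = \g$. So I would pick non-abnormal directions $X_1 \in (\g_1)_1 \setminus \{0\}$ and $X_2 \in (\g_1)_2 \setminus \{0\}$ in the two factors, and try to show that $X \coloneqq (X_1, X_2)$ is non-abnormal in $\G_1 \times \G_2$, whose Lie algebra is the direct sum $\g = \g^{(1)} \oplus \g^{(2)}$ with the bracket acting componentwise.

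First I would observe that the horizontal layer of $\G_1 \times \G_2$ is $\g_1 = (\g_1)^{(1)} \oplus (\g_1)^{(2)}$, and that for any $Y = (Y_1, Y_2) \in \g_1$ and any $k$ one has $\mathrm{ad}_X^k(Y) = (\mathrm{ad}_{X_1}^k(Y_1), \mathrm{ad}_{X_2}^k(Y_2))$, since brackets across the two factors vanish. Consequently, taking $Y_2 = 0$ and letting $Y_1$ range over $(\g_1)^{(1)}$ shows that $\mathrm{span}\{\mathrm{ad}_X^k((\g_1)^{(1)} \oplus 0) : k \geq 0\}$ projects onto all of $\g^{(1)}$ (by the criterion applied to $X_1$ in $\G_1$), and in fact equals $\g^{(1)} \oplus 0$; symmetrically one gets $0 \oplus \g^{(2)}$. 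Hence $\mathrm{span}\{\mathrm{ad}_X^k(\g_1) : k \geq 0\} \supseteq (\g^{(1)} \oplus 0) + (0 \oplus \g^{(2)}) = \g$, so $X$ is non-abnormal by Proposition~\ref{prop:nonabnormal}. There is one minor bookkeeping point: the step $s$ of $\G_1 \times \G_2$ is $\max(s_1, s_2)$, so the range $k = 0, \dots, s-1$ comfortably contains the ranges needed for each factor, and padding with higher powers (which may be zero in the shorter factor) causes no trouble.

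A slightly cleaner alternative, which I might present instead, is to avoid coordinates on powers of $\mathrm{ad}$ altogether: the image of the differential of the endpoint map factors through the direct sum, i.e.\ $\mathrm{Im}(\der\mathrm{End}_{\G_1 \times \G_2}(\gamma)) = \der R_{\gamma(1)}\big(\mathrm{Im}(\der\mathrm{End}_{\G_1}(\gamma_1)) \oplus \mathrm{Im}(\der\mathrm{End}_{\G_2}(\gamma_2))\big)$ for $\gamma(t) = \exp(tX) = (\exp(tX_1), \exp(tX_2))$, again because the two control systems decouple. Full rank in each factor then immediately gives full rank for the product. Either way, the proof is essentially a direct-sum decomposition argument and I do not anticipate a genuine obstacle; the only thing to be careful about is making the identification of Lie algebras, horizontal layers, and the adjoint action under the product structure fully explicit, and noting that ``non-zero'' is preserved since $(X_1, X_2) \neq 0$ as soon as, say, $X_1 \neq 0$.
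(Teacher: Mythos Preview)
Your proposal is correct and follows essentially the same approach as the paper: pick non-abnormal directions $X_1$, $X_2$ in each factor, observe that the adjoint action of $(X_1,X_2)$ on the product algebra splits componentwise as $\mathrm{ad}^k_{(X_1,X_2)}(Y_1,Y_2)=(\mathrm{ad}^k_{X_1}Y_1,\mathrm{ad}^k_{X_2}Y_2)$, and conclude via the algebraic criterion of Proposition~\ref{prop:nonabnormal}. Your added remarks on the step bookkeeping and the alternative endpoint-map argument are fine but not needed for the paper's version.
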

\begin{proof}		
  Denote by $\g(\G_i)$ the Lie algebra of $\G_i$ and by $\g_1(\G_i)$ the related horizontal layer, for $i=1,2$.
  Recall that the Lie bracket of the product algebra $\g(\G_1)\times \g(\G_2)$ is defined by
  \[
  [(Y_1,Y_2), (Z_1,Z_2)]=([Y_1, Z_1],[Y_2,Z_2]),
  \]
  for every $Y_1,Z_1\in \g(\G_1)$ and every $Y_2,Z_2\in \g(\G_2)$. Then, by induction on $k$, one can check that
  \[
  \mathrm{ad}^k_{(Y_1,Y_2)}(Z_1,Z_2)= (\mathrm{ad}^k_{Y_1}(Z_1),\mathrm{ad}^k_{Y_2}(Z_2)),
  \]
  for every $k\in \mathbb N$, every $Y_1,Z_1\in \g(\G_1)$ and every $Y_2,Z_2\in \g(\G_2)$.
  
   Let $X_1\in \g_1(\G_1)$ and $X_2\in \g_1(\G_2)$ be non-zero non-abnormal directions for $\G_1$ and $\G_2$, respectively. Then $(X_1,X_2)$ is non-abnormal for $\G_1\times \G_2$. To prove this it is enough to notice that for any $k\in \N$ one has
  \[
  \mathrm{ad}^k_{(X_1,X_2)}(\g_1(\G_1)\times\g_1(\G_2))=\mathrm{ad}^k_{X_1}(\g_1(\G_1))\times\mathrm{ad}^k_{X_2}(\g_1(\G_2))\qedhere
  \]
\end{proof}
\begin{prop}\label{prop:quotient}
 Let $ \G $ be a Carnot group possessing a non-zero non-abnormal horizontal direction and assume that $ N \trianglelefteq \,\G$ is a normal subgroup of $\G$. Then, the Carnot group $ \G /N $ possesses a non-zero non-abnormal horizontal direction.
\end{prop}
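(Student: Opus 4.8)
The plan is to transport the algebraic criterion of Proposition~\ref{prop:nonabnormal} through the quotient homomorphism. Write $\mathfrak n$ for the Lie algebra of $N$. Since $\G/N$ is a Carnot group, $\mathfrak n$ is a graded ideal of $\g$, that is $\mathfrak n=\bigoplus_{i=1}^s(\mathfrak n\cap\g_i)$, and the canonical projection $\pi\colon\G\to\G/N$ is a morphism of Carnot groups whose differential at the identity $\pi_*\colon\g\to\g/\mathfrak n$ is a surjective Lie algebra homomorphism carrying each layer $\g_i$ onto the $i$-th layer of the Lie algebra of $\G/N$; in particular $\pi_*(\g_1)=\g_1(\G/N)$.

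First I would select a suitable horizontal direction. By hypothesis there exists a non-zero non-abnormal $X_0\in\g_1$, and by Remark~\ref{rmk:U:notabnormal} the set of non-abnormal directions is a non-empty open subset of $\Sp(\g_1)$. Excluding the trivial case $N=\G$, the subspace $\mathfrak n\cap\g_1$ is proper in $\g_1$, so this open set of non-abnormal directions cannot be contained in $\mathfrak n$; hence I can choose $X\in\g_1\setminus\mathfrak n$ that is non-abnormal for $\G$, and set $\bar X\coloneqq\pi_*(X)$, which is non-zero.

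Then I would push the identity of Proposition~\ref{prop:nonabnormal} forward along $\pi_*$. Since $\pi_*$ is a Lie algebra homomorphism, $\pi_*(\mathrm{ad}_X^kY)=\mathrm{ad}_{\bar X}^k(\pi_*Y)$ for every $k\in\N$ and $Y\in\g$, so
\[
\mathrm{span}\{\mathrm{ad}_{\bar X}^k(\g_1(\G/N)):k=0,\dots,s-1\}
=\pi_*\bigl(\mathrm{span}\{\mathrm{ad}_X^k(\g_1):k=0,\dots,s-1\}\bigr)
=\pi_*(\g)=\g/\mathfrak n.
\]
If $s'\le s$ denotes the step of $\G/N$, then $\mathrm{ad}_{\bar X}^k(\g_1(\G/N))=\{0\}$ for all $k\ge s'$, so the span over $k=0,\dots,s'-1$ already equals $\g/\mathfrak n$; by Proposition~\ref{prop:nonabnormal} applied in $\G/N$ this is exactly the statement that $\bar X$ is non-abnormal, which completes the argument.

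I do not expect a substantial obstacle here: the only points requiring care are the structural facts that $\mathfrak n$ is graded, so that $\G/N$ is a Carnot group and $\pi_*$ preserves the stratification, and the choice of $X$ outside $\mathfrak n$ ensuring $\bar X\neq 0$; the core of the proof is the formal computation with the adjoint action transported through the surjective homomorphism $\pi_*$.
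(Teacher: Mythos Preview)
Your proof is correct and follows essentially the same strategy as the paper: push the algebraic criterion of Proposition~\ref{prop:nonabnormal} through the surjective Lie algebra homomorphism $\pi_*$, using that $\pi_*$ intertwines the adjoint actions and carries $\g_1$ onto $\g_1(\G/N)$. The only organizational difference is that you invoke openness of the non-abnormal set at the outset to pick $X\notin\mathfrak n$ (ensuring $\bar X\neq 0$ directly), whereas the paper pushes forward an arbitrary non-abnormal $X$ and then handles the case $\pi_*X=0$ separately at the end, again via openness and dilation-invariance.
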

\begin{proof}
  Let $X\in \g_1\setminus\{0\}$ be a non-abnormal direction for $\G$. Then, if $\pi:\G\rightarrow \G/N$ is the canonical projection, the push-forward vector $\pi_* X$ is non-abnormal for $\G/N$. This is true by the fact that
  \[
\pi\mathfrak g=\mathrm{span}\{\pi_* \mathrm{ad}^k_X \g_1:k\in \N\}=\mathrm{span}\{\mathrm{ad}^k_{\pi_* X} \pi_* \g_1: k\in \N\},
\]
and by noticing that $\pi_* \g_1=\g_1(\G/N)$. If $\pi_*X\neq 0$, the proof is concluded. If $\pi_*X=0$, then $0\in~\g_1(\G/N)$ is non-abnormal and, since non-abnormality is an open and dilation-invariant condition, all the elements in $\g_1(\G/N)$ are non-abnormal.
\end{proof}
\section{Filiform groups}
\label{Sec:Filiforms}
\noindent
In this section we briefly introduce filiform groups and study their abnormal horizontal lines and automorphisms. As a corollary, we obtain that, in all filiform groups, the reduced boundary of any set of locally finite perimeter is intrinsically Lipschitz rectifiable.
\begin{defn}
	We say that a Carnot group $\G$ is a \emph{filiform group} of step $s$ if the stratification $\g=\g_1\oplus\dots\oplus\g_s$ of its Lie algebra satisfies $\dim \g_1=2$ and $\dim \g_i=1$, for every $i=2,\dots,s$.
\end{defn}
\begin{defn}\label{def:filiforms_first_and_second}
	Let $ \G $ be a filiform group of step $ s $.
	If its Lie algebra $ \g $ has a basis $ \{X_0,\dots,X_s \} $ with the only nonzero bracket relations
	\begin{enumerate}[(i)]
		\item $ [X_0,X_i]=X_{i+1}$ for $ i=1,\dots,s-1 $, then $ \G $ is said to be \emph{of the first kind}.
		\item  $ [X_0,X_i]=X_{i+1} $ for $ i=1,\dots,s-2 $ and $ [X_i,X_{s-i}]=(-1)^iX_s $ for  $ i=1,\dots,s-1  $, then $ \G $ is said to be \emph{of the second kind}.
	\end{enumerate}
\end{defn}
\noindent
In what follows, any basis of the Lie algebra $ \{X_0,\dots,X_s\} $ satisfying relations (i) or (ii) will be called \emph{filiform basis}.
The classification of filiform groups below follows from \cite[Proposition 5]{Vergne}.
\begin{prop}[Vergne]\label{prop:classif_of_filiforms}
	Let $ \G $ be a filiform group of step $ s $. If $ s = 2n+1 $  for $ n\geq 2 $, then $ \G $ is either of the first kind or of the second kind. Otherwise $ \G $ is of the first kind.
\end{prop}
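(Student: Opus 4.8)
The statement to prove is Proposition~\ref{prop:classif_of_filiforms} (Vergne's classification). Since the excerpt says this "follows from \cite[Proposition 5]{Vergne}", the plan is to reduce to that known result and make the reduction explicit.

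\medskip

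The plan is to argue that a filiform stratified Lie algebra $\g=\g_1\oplus\cdots\oplus\g_s$ with $\dim\g_1=2$, $\dim\g_i=1$ for $i\geq 2$, is a \emph{filiform Lie algebra} in the classical sense (i.e.\ it is $(s+1)$-dimensional and its lower central series has the maximal possible length, equivalently its nilpotency is ``as slow as possible'' so that $\dim\g/[\g,\g]=2$ and each successive quotient drops by one). Indeed the stratification gives $\dim\g=2+(s-1)=s+1$ and, setting $\g^{(1)}=\g$, $\g^{(k+1)}=[\g,\g^{(k)}]$, stratification forces $\g^{(k)}=\g_k\oplus\cdots\oplus\g_s$, so $\dim\g^{(k)}=s+1-k$ for $k\geq 1$; this is exactly the defining codimension condition for a filiform nilpotent Lie algebra of dimension $s+1$. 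First I would record this identification, then invoke the Vergne classification of filiform Lie algebras carrying a grading (stratification): graded filiform algebras of dimension $m=s+1$ are, up to isomorphism, either the ``model'' filiform algebra $L_m$ (first kind, with the bracket relations $[X_0,X_i]=X_{i+1}$) or, when $m$ is odd, one additional algebra $Q_m$ (second kind, with the extra relations $[X_i,X_{s-i}]=(-1)^iX_s$). Since $m=s+1$, $m$ odd corresponds to $s$ even; but note the statement uses $s=2n+1$ odd, $n\geq 2$ — so I would double-check the parity bookkeeping and phrase the reduction to match exactly: when $s+1$ is even (equivalently $s$ odd) there is only the first kind, and the sporadic second-kind family $Q_{s+1}$ occurs precisely when $s+1$ is odd, i.e.\ $s$ even. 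The hypothesis $n\geq 2$ (so $s\geq 5$) excludes the small-step degeneracies where the two families coincide or where low-dimensional coincidences occur.

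\medskip

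Wait — the statement as written says "$s=2n+1$ for $n\geq 2$ implies either first or second kind; otherwise first kind". This must be reconciled: I would interpret it as saying that the second kind genuinely occurs as a \emph{distinct} possibility only for those odd steps $s=2n+1$ with $n\geq 2$, and for all other $s$ every filiform group is of the first kind; if the parity in Vergne's reference is stated via dimension $m=s+1$, a translation step $m\leftrightarrow s+1$ is needed and I would present it carefully rather than gloss it. So the concrete steps are: (1) translate the stratified-group hypotheses into the classical filiform-Lie-algebra condition on dimensions of the lower central series; (2) quote Vergne's classification \cite[Proposition 5]{Vergne} of graded (naturally graded) filiform Lie algebras; (3) match the two families $L_m$, $Q_m$ in Vergne's list with Definition~\ref{def:filiforms_first_and_second}(i),(ii) by exhibiting the explicit change of basis taking one presentation to the other; (4) track the parity/dimension constraint, using $n\geq 2$ to rule out the degenerate small cases, to land exactly on the dichotomy in the statement.

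\medskip

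The main obstacle I expect is step (3)–(4): matching conventions. Vergne's presentation of $Q_m$ may use different signs or a shifted indexing from the $[X_i,X_{s-i}]=(-1)^iX_s$ relations here, and the "graded" vs.\ "naturally graded" hypothesis must be exactly the one that yields only two isomorphism classes (a general filiform Lie algebra has moduli; it is the existence of a compatible \emph{grading} — automatic here from the Carnot stratification — that collapses the classification to these two). So the real content of the write-up is to verify that the Carnot stratification supplies precisely Vergne's grading hypothesis, and then to perform the basis change certifying that the normal forms coincide with Definition~\ref{def:filiforms_first_and_second}. I would keep the calculation light: check that under $[X_0,X_i]=X_{i+1}$ the derived series has the right dimensions (so the first kind is filiform), check that adding $[X_i,X_{s-i}]=(-1)^iX_s$ preserves both the Jacobi identity and the grading (the extra bracket lands in $\g_s$, as $i+(s-i)=s$) and does not change $\dim\g^{(k)}$, and finally cite Vergne that these exhaust the list, with the odd-step condition being where $Q_{s+1}$ is a new point not isomorphic to $L_{s+1}$.
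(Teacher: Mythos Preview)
The paper itself gives no proof of this proposition: it simply attributes the result to Vergne and cites \cite[Proposition~5]{Vergne}. Your plan --- identify the stratified algebra as a naturally graded filiform Lie algebra, invoke Vergne's list $\{L_m,Q_m\}$, and match the presentations with Definition~\ref{def:filiforms_first_and_second} by a change of basis --- is exactly the content behind that citation, so your approach coincides with the paper's.

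One concrete correction: your parity bookkeeping is backwards. In Vergne's classification the exceptional family $Q_m$ exists when the dimension $m$ is \emph{even}, not odd; since $m=s+1$, this means $s$ odd, which is precisely the condition $s=2n+1$ in the statement, and no further ``reconciliation'' is needed. You can see this directly from the bracket relations: antisymmetry of $[X_i,X_{s-i}]=(-1)^iX_s$ forces $(-1)^{s-i}=(-1)^{i+1}$, i.e.\ $s$ odd, so the second kind cannot even be written down for $s$ even. The constraint $n\geq 2$ excludes $s=3$, where both presentations yield the $4$-dimensional Engel algebra and the two kinds coincide. With this parity fix your reduction goes through cleanly.
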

\noindent
We point out that our choice of basis for the filiform groups of the second kind differs from the one of Vergne. Indeed, in \cite{Vergne} the author considers a basis $ \{Y_0,\dots,Y_s\} $ for which $ [Y_0,Y_i]=Y_{i+1} $ and $ [Y_i,Y_{s-i}]=(-1)^iY_s $ for $ i=1,\dots,s-1  $. One sees that the basis of Vergne has an extra nonzero bracket $ [Y_0,Y_{s-1}]=Y_s $ and the basis in Definition \ref{def:filiforms_first_and_second} (ii) is obtained by choosing $ X_0 = Y_0 + Y_1 $ and $ Y_i = X_i $ for $ i= 1,\dots,s $. In addition to having fewer nontrivial bracket relations, our choice of basis has the benefit that it is adapted to the abnormal lines, as we will see next.

\begin{prop}\label{prop:abn_lines_filiform}
	Let $ \G $ be a filiform group of step at least $ 3 $ and let $ \{X_0,\dots,X_s \} $ be a filiform basis for $ \g $. Then the line $t\mapsto \exp(tX_1) $ is abnormal. If $ \G $ is of the first kind, then this is the only abnormal horizontal line of $ \G $. Otherwise there exists exactly one other abnormal horizontal line, namely $ t\mapsto \exp(tX_0) $.
\end{prop}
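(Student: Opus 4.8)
The strategy is to use the algebraic characterization of abnormality from Proposition~\ref{prop:nonabnormal}, namely that $t\mapsto\exp(tX)$ is abnormal if and only if
\[
\mathrm{span}\{\mathrm{ad}_X^k(\g_1) : k=0,\dots,s-1\}\neq \g,
\]
and simply compute the iterated brackets $\mathrm{ad}_X^k$ of a generic horizontal vector $X=aX_0+bX_1$ (or, more efficiently, treat separately the cases $b=0$ and $b\neq 0$ after rescaling). The first step is to verify that $X_1$ is abnormal in both kinds: in the first kind $\mathrm{ad}_{X_1}\equiv 0$ on all of $\g$ (since $X_1$ has no nonzero bracket with any $X_i$), so $\mathrm{span}\{\mathrm{ad}_{X_1}^k(\g_1)\}=\g_1\neq\g$; in the second kind $\mathrm{ad}_{X_1}(X_{s-1})=(-1)X_s$ but $\mathrm{ad}_{X_1}(X_0)=-X_2\notin\mathrm{span}$, wait — more carefully, $\mathrm{ad}_{X_1}(\g_1)=\mathrm{span}\{[X_1,X_0]\}=\mathrm{span}\{-X_2\}$ and $\mathrm{ad}_{X_1}^2(\g_1)=\mathrm{span}\{[X_1,X_2]\}$; one checks from relation (ii) that $[X_1,X_j]=0$ for $2\le j\le s-2$ and $[X_1,X_{s-1}]=-X_s$, so the span one generates omits the whole middle of the tower and in particular $X_2,\dots,X_{s-1}$ except possibly $X_2$ — in any case it is a proper subspace, so $X_1$ is abnormal. (The precise bookkeeping is routine once the bracket table is written out.)

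\textbf{Main case: horizontal $X$ with nonzero $X_0$-component.} After rescaling we may take $X=X_0+bX_1$ for some $b\in\R$. Here $\mathrm{ad}_X(X_1)=[X_0,X_1]+b[X_1,X_1]=X_2$, and inductively $\mathrm{ad}_X^k(X_1)=X_{k+1}+(\text{lower-order correction terms from the }b[X_i,X_{s-i}]\text{ brackets, in the second kind})$. The key point is that $\mathrm{ad}_X^0(X_1)=X_1$, $\mathrm{ad}_X^1(X_1),\dots,\mathrm{ad}_X^{s-1}(X_1)$ together with $X_0$ form a basis of $\g$: the leading terms are $X_1, X_2,\dots,X_s$ respectively, so the matrix expressing these vectors in the filiform basis is triangular with nonzero diagonal. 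Hence $\mathrm{span}\{\mathrm{ad}_X^k(\g_1):k=0,\dots,s-1\}=\g$, so such $X$ is non-abnormal. This shows the only candidates for abnormal lines are those with $X_0$-component zero, i.e.\ multiples of $X_1$, which are abnormal by the first step — \emph{except} that in the second kind we must still examine $X=X_0$ itself (it has zero $X_1$-component). For $X=X_0$ in the second kind: $\mathrm{ad}_{X_0}^k(X_i)=X_{i+k}$ for $i+k\le s-1$, but $\mathrm{ad}_{X_0}(X_{s-1})=0$ (our chosen basis kills this bracket!), so from $\g_1=\mathrm{span}\{X_0,X_1\}$ one generates only $X_1,X_2,\dots,X_{s-1}$ and never reaches $X_s$. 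Thus $X_0$ is abnormal in the second kind. Conversely $X=X_0$ in the first kind \emph{does} reach $X_s$ via $\mathrm{ad}_{X_0}^{s-1}(X_1)=X_s$, so it is non-abnormal there, consistent with the statement.

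\textbf{Conclusion and the main obstacle.} Combining: in the first kind the abnormal horizontal lines are exactly the multiples of $X_1$; in the second kind they are exactly the multiples of $X_1$ and the multiples of $X_0$. Since abnormality is invariant under rescaling (Definition~\ref{def:abnormal} and the remark preceding Proposition~\ref{prop:nonabnormal}), these correspond to the lines $t\mapsto\exp(tX_1)$ and $t\mapsto\exp(tX_0)$. The only genuinely delicate point is the second-kind computation: one must be careful that the $b[X_i,X_{s-i}]$ corrections in $\mathrm{ad}_{X_0+bX_1}^k$ do not accidentally destroy the triangular structure, and that the specific basis of Definition~\ref{def:filiforms_first_and_second}(ii) — as opposed to Vergne's — is exactly the one for which $[X_0,X_{s-1}]=0$, which is what makes $X_0$ abnormal; this is precisely the "adapted to abnormal lines" remark made before the proposition, and the proof should flag it explicitly. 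I expect the bracket bookkeeping in the second kind (tracking which $X_j$ appear in $\mathrm{ad}_X^k(X_1)$ when $b\neq 0$) to be the one place where care is needed, though it remains elementary linear algebra over the filiform basis.
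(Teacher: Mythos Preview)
Your direct-computation approach is correct and will go through, but it differs from the paper's in structure. You parametrize a generic horizontal vector as $X=X_0+bX_1$, compute $\mathrm{ad}_X^k(X_1)$ explicitly, and argue by triangularity; the paper instead first isolates a clean abstract criterion valid for any filiform group of step $s\geq 3$---namely, $X\in\g_1$ is abnormal if and only if $[X,X_i]=0$ for some $i\in\{2,\dots,s-1\}$---and then reads the answer straight off the bracket table. For the upper bound on the number of abnormal lines the paper also uses a structural shortcut: it passes to the quotient $\g/\g_s$, which is filiform of step $s-1$ and hence of the first kind by Vergne's classification, so the maps $\phi_i\colon X\mapsto[X,X_i]$ for $2\le i\le s-2$ share a one-dimensional kernel, and $\ker\phi_{s-1}$ contributes at most one further line. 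Your route is more elementary and self-contained; the paper's yields a reusable criterion and avoids the case-split over the parameter $b$.

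One correction to your bookkeeping: in the second kind the ``correction term'' at the top of the tower is not a lower-order perturbation of a leading $X_s$---since $[X_0,X_{s-1}]=0$ in that basis, one has $\mathrm{ad}_{X_0+bX_1}^{\,s-1}(X_1)=-bX_s$ \emph{exactly}, so the last diagonal entry of your triangular system is $-b$, not $1$. This is precisely what makes $b=0$ (that is, $X=X_0$) abnormal while $b\neq 0$ is non-abnormal. Your ``except'' clause does catch this, but note that $X_0$ is the case $b=0$ of the same parametrization, not a separate one outside it; phrase the argument so that the triangular-determinant computation itself detects the degeneration at $b=0$.
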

\begin{proof}
	We begin by proving the following claim: if $ \G $ is a filiform group of step $ s \geq 3 $ and $ X \in \g_1 $, then the line $ t\mapsto \exp(tX) $ is abnormal if and only if $ [X,X_i]=0 $ for some $ i= 2,\dots,s-1 $. Assume first that $ [X,X_i]=0 $ for some  $ i= 2,\dots,s-1 $. Then, since $ \g_i $ is one-dimensional, $ [X,\g_i] = 0 $ and in particular 
	\[ 
	\g_{i+1}\cap \mathrm{span}\{ \mathrm{ad}_X^k(\g_1):k=0,\dots,s-1\}=\mathrm{ad}^i_X(\g_1) = [X,\mathrm{ad}^{i-1}_X(\g_1)] = 0.
	\]
	 Hence, by Proposition \ref{prop:nonabnormal}, the line $ t\mapsto \exp(tX) $ is abnormal. To prove the other implication, suppose that, for each $ i = 2,\dots, s-1 $, one has $ [X,X_i]\neq 0 $. Since the first layer $ \g_1 $ does not contain elements of the center, we also find $ Y_1 \in \g_1 $ for which $ [X,Y_1]\neq 0 $. Observe now that, since $ \g_i = \mathrm{span}([X,X_{i-1}]) $, one has $ \g_i = \mathrm{span}(\mathrm{ad}^{i-1}_X(Y_1)) $ for all $ i = 2,\dots, s $. By Proposition \ref{prop:nonabnormal}, the claim is proved.
	
	As a consequence of the claim, since any filiform basis satisfies $ [X_1,X_2] = 0 $, the line $t\mapsto \exp(tX_1) $ is abnormal. If $ \G $ is of the second kind, then the relation $[X_0,X_{s-1}]= 0$ shows that the line $  t\mapsto \exp(tX_0) $ is abnormal as well. We now show that, if $ \G $ is of the first kind, the line  $t\mapsto \exp(tX_1) $ is the only abnormal horizontal line of $ \G $. Indeed, for every $ a\in \R $, the element $ X \coloneqq X_0 + aX_1 \in \g_1 $ satisfies $ [X,X_i]= X_{i+1} $ for all $ i = 1,\dots,s-1 $. By the previous observations, the line $ t \mapsto \exp(tX) $ is non-abnormal.
	
	We are left to prove that a filiform group cannot possess more than two abnormal horizontal lines. This fact would imply that $  t\mapsto \exp(tX_0) $ and $t\mapsto \exp(tX_1) $ are the only horizontal abnormal lines of the filiform groups of the second kind. First, notice that a line $ t\mapsto \exp(tX) $ is abnormal if and only if $ X \in \bigcup_{i=2}^{s-1}\ker \phi_i $, where $ \phi_i \colon \g_1 \to \g_{i+1} $ is defined by $ \phi_i(X) = [X,X_i] $. According to Proposition \ref{prop:classif_of_filiforms}, the algebra $ \g/\g_s $ is filiform of step $s-1$ and therefore is of the first kind. By the previous part of this proof, the set $ \bigcup_{i=2}^{s-2}\ker \phi_i \subseteq  \g/\g_s$ is one-dimensional. Since $ \phi_{s-1} $ is surjective, also $ \dim \ker \phi_{s-1} = 1 $ and we conclude that $ \bigcup_{i=2}^{s-1}\ker \phi_i $ cannot contain more than two linearly independent lines.
\end{proof}
\begin{corollary}
	Every filiform group has a horizontal non-abnormal line. In particular, Corollary~\ref{cor:LR} applies to all filiform groups.
\end{corollary}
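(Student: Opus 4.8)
The plan is to read this off directly from Proposition~\ref{prop:abn_lines_filiform} together with the rectifiability criterion of Corollary~\ref{cor:LR}. The point is that in a filiform group the horizontal layer $\g_1$ is two-dimensional, so the horizontal lines through the origin are parametrized by the projective line $\mathbb{P}(\g_1)$, which contains infinitely many points; since abnormality of $t\mapsto\exp(tX)$ depends only on the line $\R X$ — indeed by Proposition~\ref{prop:nonabnormal} it is governed by the dilation-invariant algebraic condition \eqref{eq:adjoint} — to produce a non-abnormal horizontal line it is enough to know that only finitely many lines in $\g_1$ are abnormal.

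First I would treat the case of step $s\geq 3$. Here Proposition~\ref{prop:abn_lines_filiform} states that $\G$ has exactly one abnormal horizontal line if it is of the first kind, and exactly two if it is of the second kind. In either case the abnormal directions form a subset of $\g_1$ that is a union of at most two lines, hence any $X\in\g_1\setminus\{0\}$ outside this subset yields a non-abnormal horizontal line $t\mapsto\exp(tX)$.

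It then remains to dispose of the degenerate case $s=2$, which is not covered by Proposition~\ref{prop:abn_lines_filiform} (whose proof carries the standing hypothesis $s\geq 3$). A filiform group of step $2$ has $\dim\g_1=2$, $\dim\g_2=1$ and $[\g_1,\g_1]=\g_2$, hence is the first Heisenberg group. For every $X\in\g_1\setminus\{0\}$ one has $\mathrm{ad}_X(\g_1)=[X,\g_1]=\g_2$, because $\g_1$ contains no nonzero central element and $\g_2$ is one-dimensional, so
\[
\mathrm{span}\{\mathrm{ad}^k_X(\g_1):k=0,1\}=\g_1\oplus\g_2=\g,
\]
and by Proposition~\ref{prop:nonabnormal} every nonzero horizontal direction is non-abnormal.

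In all cases $\G$ possesses a non-abnormal $X\in\g_1\setminus\{0\}$, so Corollary~\ref{cor:LR} immediately gives that the reduced boundary of every set of locally finite perimeter in $\G$ is intrinsically Lipschitz rectifiable. I do not expect any real obstacle here: the substance is already in Proposition~\ref{prop:abn_lines_filiform} and Corollary~\ref{cor:LR}, and the only point that needs a moment's attention is recognising that the step-$2$ case lies outside the scope of Proposition~\ref{prop:abn_lines_filiform} and handling it by the one-line Heisenberg computation above.
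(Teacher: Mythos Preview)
Your argument is correct and follows the same route the paper intends: the corollary is stated in the paper without proof, as an immediate consequence of Proposition~\ref{prop:abn_lines_filiform} (at most two abnormal horizontal lines in a two-dimensional $\g_1$) together with Corollary~\ref{cor:LR}. Your additional handling of the step-$2$ case is a nice touch, since the proof of Proposition~\ref{prop:abn_lines_filiform} indeed carries the hypothesis $s\geq 3$ and its first conclusion (that $t\mapsto\exp(tX_1)$ is abnormal) would fail in the Heisenberg group; the paper tacitly sidesteps this, so your one-line verification that every horizontal direction in $\mathbb H^1$ is non-abnormal fills a small gap rather than departing from the intended approach.
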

\begin{rmk}
	Applying \cite[Proposition 2.21]{LDMOPV}, one can check that in filiform groups of the first kind the only horizontal injective abnormal curve from the origin is, up to reparametrization, the line $ t \mapsto \exp(tX_1) $. On the other hand, in filiform groups of the second kind there are also horizontal abnormal curves that are not lines. For example, the curve defined by
	\[
	t \mapsto \begin{cases}
	\exp(tX_1)& \text{ for } t\in[0,1], \\
	\exp(X_1)\exp((t-1)X_0)& \text{ for } t\in [1,2]
	\end{cases}
	\] 
	is abnormal.
\end{rmk}
\noindent
 For the sake of completeness, we end by describing the graded Lie algebra automorphisms (i.e., the stratification preserving Lie automorphisms) of filiform Lie algebras. By the following two propositions, we observe that any linear bijection on the horizontal layer that fixes the abnormal lines extends uniquely to a Lie algebra automorphism. 
\begin{prop}\label{prop:automs_first_kind}
	Let $ \g $ be a filiform Lie algebra of the first kind of step at least $ 3 $ equipped with a filiform basis $\{X_0,\dots,X_s\} $. The linear transformation on $ \g_1 $ that in basis $ \{X_0,X_1\} $ is given by the matrix	
	\begin{align*}
	\begin{pmatrix}
	a & 0 \\
	c & b
	\end{pmatrix}
	\end{align*}
	induces a (graded) Lie algebra automorphism for every $ a,b\in \R \setminus\{0\} $ and  $ c\in \R $. Moreover, every graded Lie algebra automorphism of $ \g $ is of this form.
\end{prop}
\begin{proof}
	Notice that  the only horizontal vectors that commute with $ \g_2 $ are those parallel to $ X_1 $. Therefore, any $ \psi \in \mathrm{Aut}(\g) $ maps $ \psi(X_1) = bX_1 $ with $ b \in \R\setminus\{0\} $. Then mapping $ \psi(X_0) = aX_0 + cX_1 $ defines a Lie algebra automorphism for any choice of $ a,b \in \R \setminus\{0\} $ and $ c\in \R $ by $ \psi(X_i) \coloneqq \mathrm{ad}_{\psi_1(X_0)}^{i-1}(\psi(X_1)) = a^ibX_i$.
\end{proof}
\begin{prop}\label{prop:automs_second_kind}
	Let $ \g $ be a filiform Lie algebra of the second kind with a filiform basis $\{X_0,\dots,X_s\} $. The linear transformation on $ \g_1 $ that in basis $ \{X_0,X_1\} $ is given by the matrix	
	\begin{align*}
	\begin{pmatrix}
	a & 0 \\
	0 & b
	\end{pmatrix}
	\end{align*}
	induces a (graded) Lie algebra automorphism for every $ a,b\in \R \setminus\{0\} $. Moreover, every graded Lie algebra automorphism of $ \g $ is of this form.
\end{prop}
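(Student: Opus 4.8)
The plan is to follow the same strategy as in Proposition~\ref{prop:automs_first_kind}: first determine where the horizontal layer must be sent by a graded automorphism, using the intrinsic characterization of the two abnormal lines from Proposition~\ref{prop:abn_lines_filiform}, and then check that the resulting linear maps genuinely extend to automorphisms of the whole algebra.

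First I would show that any graded Lie algebra automorphism $\psi$ of $\g$ must preserve the two abnormal horizontal lines $\R X_0$ and $\R X_1$. Indeed, being an automorphism, $\psi$ maps abnormal curves to abnormal curves, and by Proposition~\ref{prop:abn_lines_filiform} a filiform group of the second kind (which by Proposition~\ref{prop:classif_of_filiforms} exists only for step $s=2n+1$, $n\ge 2$) has exactly the two abnormal horizontal lines $t\mapsto\exp(tX_0)$ and $t\mapsto\exp(tX_1)$. More directly, one can argue algebraically via the criterion in the proof of Proposition~\ref{prop:abn_lines_filiform}: $X$ generates an abnormal line iff $[X,X_i]=0$ for some $i\in\{2,\dots,s-1\}$, a condition preserved under graded automorphisms. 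Since an automorphism is in particular a linear bijection of $\g_1$, it must permute the set of abnormal directions; as these span a two-dimensional space with exactly two distinct lines, $\psi$ either fixes each line or swaps them. I would rule out the swap: $X_1$ is the unique horizontal vector (up to scalar) commuting with $\g_2$ — because $[X_0,X_1]=X_2$ while $[X_0,\cdot]$ restricted to $\mathrm{span}\{X_2,\dots,X_{s-1}\}$ is injective, so only $X_1$ kills $\g_2$ — whereas $X_0$ does not commute with $\g_2$. Hence $\psi(X_1)=bX_1$ and $\psi(X_0)=aX_0$ for some $a,b\in\R\setminus\{0\}$, with no off-diagonal term, which gives the asserted matrix form.

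Conversely, I would verify that the diagonal map $X_0\mapsto aX_0$, $X_1\mapsto bX_1$ extends to a graded automorphism. Define $\psi(X_i)\coloneqq \mathrm{ad}_{aX_0}^{\,i-1}(bX_1)=a^{i-1}b\,X_i$ for $i=1,\dots,s-1$, and $\psi(X_s)\coloneqq a^{s-2}b^2X_s$ (this exponent being forced by the relation $[X_1,X_{s-1}]=-X_s$: one computes $\psi([X_1,X_{s-1}])=[bX_1,a^{s-2}bX_{s-1}]=a^{s-2}b^2[X_1,X_{s-1}]$). It then remains to check that $\psi$ respects all the bracket relations of Definition~\ref{def:filiforms_first_and_second}(ii): the relations $[X_0,X_i]=X_{i+1}$ for $i=1,\dots,s-2$ hold by the inductive definition, and the relations $[X_i,X_{s-i}]=(-1)^iX_s$ for $i=1,\dots,s-1$ must be checked to produce the same scaling factor $a^{s-2}b^2$ on $X_s$; since $\psi(X_i)=a^{i-1}b\,X_i$ and $\psi(X_{s-i})=a^{s-i-1}b\,X_{s-i}$, the product of the scalars is $a^{(i-1)+(s-i-1)}b^2=a^{s-2}b^2$, matching the scaling on $X_s$. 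This compatibility is the crux, and — reassuringly — it holds uniformly in $i$ precisely because our adapted basis has the symmetric bracket relations; no off-diagonal freedom survives because a term $cX_1$ added to $\psi(X_0)$ would alter $[X_0,X_{s-1}]$ (which is $0$ in our basis) into a nonzero multiple of $X_s$, breaking the algebra structure. I expect the main obstacle to be this last bookkeeping: confirming that the degree-$s$ relations are all simultaneously consistent and that the off-diagonal entry is genuinely obstructed (unlike in the first-kind case), but once the abnormal-line argument pins down the diagonal shape, the verification is a short computation layer by layer.
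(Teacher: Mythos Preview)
Your proposal is correct and follows essentially the same approach as the paper: first use that $X_1$ is the unique horizontal direction commuting with $\g_2$ to force $\psi(X_1)=bX_1$, then invoke Proposition~\ref{prop:abn_lines_filiform} (preservation of abnormal lines) to force $\psi(X_0)=aX_0$, and finally verify the extension $\psi(X_i)=a^{i-1}bX_i$, $\psi(X_s)=a^{s-2}b^2X_s$ respects all brackets via the same scaling computation. Your additional remarks (ruling out the swap explicitly, and checking directly that a $cX_1$ term in $\psi(X_0)$ would make $[\psi(X_0),\psi(X_{s-1})]=-ca^{s-2}bX_s\neq 0$) are not in the paper but are correct and harmless elaborations.
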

\begin{proof}
	Similarly to the filiform groups of the first kind, here $ X_1 $ is the unique direction commuting with $ \g_2 $, and hence the line $ bX_1 $, $ b\in \R $, must be fixed by every $ \psi \in \mathrm{Aut}(\g) $. Since in addition a graded Lie algebra automorphism maps abnormal horizontal lines into abnormal horizontal lines, necessarily $ \psi(X_0)= aX_0 $ for some $ a\in \R \setminus\{0\} $ according to Proposition \ref{prop:abn_lines_filiform}. Let us verify that the linear map defined by $ \psi(X_0)=aX_0 $ and $ \psi(X_1)=bX_1 $ on $ \g_1 $ induces a Lie algebra automorphism for all $a, b \in \R\setminus\{0\}$ by explicitly calculating the bracket relations. Indeed, the extension
	\begin{align*}
	\psi(X_i) &= \psi(\mathrm{ad}^{i-1}_{X_0}X_1) \coloneqq \mathrm{ad}^{i-1}_{\psi(X_0)}\psi(X_1) = a^{i-1}bX_i
	\quad\forall\; i = 1,\dots,s-1, \\
	\psi(X_s) &= \psi(-[X_1,X_{s-1}]) \coloneqq -[\psi(X_1),\psi(X_{s-1})] = a^{s-2}b^2X_s
	\end{align*}
	satisfies
	\[
	[\psi(X_i),\psi(X_{s-i})] = [a^{i-1}bX_i,a^{s-i-1}bX_{s-i}] = a^{s-2}b^2(-1)^{i}X_s = \psi([X_i,X_{s-i}]) \quad\forall\; i = 2,\dots,s-2
	\]
	and all the other brackets are zero, as required.
\end{proof}
\begin{rmk}
	To the best of our knowledge, in the literature (see e.g.\ \cite[page 93]{Vergne} or \cite[page 49]{Pansu}) there is no clear motivation why the two types of filiform groups are not isomorphic; let us provide some evidence here.
	First of all, one can check that, in dimension 4, there is only one filiform group, known as the {\em Engel group}.
	Starting from dimension 6, we provided two ways to distinguish the two classes.
	A first reason is that the spaces of graded automorphisms are different: in filiform groups of the first kind this class is 3 dimensional, while for
	the second kind this class is 2 dimensional.
	A second reason, which has a control-theoretic flavor, is that the two classes have different number of abnormal one-parameter subgroups:  in filiform groups of the first kind there is only one, 
	while for the second kind there are two.
	Recall that the stratification of a stratifiable group is unique up to isomorphisms, see \cite[Proposition 2.17]{LeDonne}. Hence, since we showed that the two classes are different as stratified groups, they are different as Lie groups.
\end{rmk}
\begin{rmk}
	We stress that the type $\star$ condition, see \cite{Marchi}, and the non-abnormality condition introduced in Corollary \ref{cor:LR} are independent. Indeed, all step-2 Carnot groups are of type $\star$, but not all step-2 Carnot groups have horizontal non-abnormal lines. Consider for example the free Carnot group $\mathbb F_{3,2}$ of step 2 and rank 3. One can easily check that, in this case, all horizontal lines are abnormal. On the other hand, as shown in Proposition \ref{prop:abn_lines_filiform}, all filiform groups have nontrivial horizontal non-abnormal lines but already the Engel group $\mathbb E$ does not satisfy the type $\star$ condition.
\end{rmk}
\begin{rmk}
	We stress that no filiform group is Pansu-rigid, as instead stated in \cite[page 49]{Pansu}.
	Indeed, in both types of filiform groups, there are more graded automorphisms than just the homotheties, in which case we would have said, by definition, that the Carnot group is Pansu-rigid, see \cite{LeDonne_Ottazzi_Warhurst}.
	It is however true that, in filiform groups of the second kind, the only graded  automorphisms that are unipotent are the dilations.
\end{rmk}

\bibliographystyle{alpha}
\bibliography{ConeBib}
\end{document}